\renewcommand\labelenumi{(\roman{enumi})}
\renewcommand\theenumi\labelenumi
\numberwithin{equation}{section}
\newtheorem{theorem}{Theorem}[section]
\newtheorem{lemma}[theorem]{Lemma}
\newtheorem{corollary}[theorem]{Corollary}
\newtheorem{proposition}[theorem]{Proposition}
\theoremstyle{definition}
\newtheorem{example}[theorem]{Example}
\newtheorem{remark}[theorem]{Remark}
\renewcommand{\epsilon}{\varepsilon}
\newcommand{\A}{\mathcal{A}}
\newcommand{\C}{\mathbb{C}}
\newcommand{\bE}{\mathbb{E}}
\newcommand{\F}{\mathcal{F}}
\newcommand{\bG}{\mathbf{G}}
\newcommand{\bH}{\mathbb{H}}
\newcommand{\cL}{\mathcal{L}}
\newcommand{\M}{\mathcal{M}}
\newcommand{\N}{\mathbb{N}}
\renewcommand{\P}{\mathcal{P}}
\newcommand{\bP}{\mathbb{P}}
\newcommand{\R}{\mathbb{R}}
\newcommand{\bS}{\mathbb{S}}
\newcommand{\id}{\operatorname{id}}
\newcommand{\ev}{\operatorname{ev}}
\newcommand{\tr}{\operatorname{tr}}
\newcommand{\Tr}{\operatorname{Tr}}
\newcommand{\vN}{\operatorname{vN}}
\newcommand{\sa}{{\operatorname{sa}}}
\newcommand{\supp}{\operatorname{supp}}
\renewcommand{\1}{\operatorname{\textbf{1}}}
\renewcommand{\star}{\bigstar}
\newcommand{\ootimes}{\mathbin{\overline{\otimes}}}
\renewcommand{\Re}{\operatorname{Re}}
\renewcommand{\Im}{\operatorname{Im}}
\def\moverlay{\mathpalette\mov@rlay}
\def\mov@rlay#1#2{\leavevmode\vtop{%
\baselineskip\z@skip \lineskiplimit-\maxdimen
\ialign{\hfil$#1##$\hfil\cr#2\crcr}}}
\def\@settitle{\begin{center}%
  \baselineskip14\p@\relax
    \normalfont \large \uppercase{\textbf{\@title}}
  \end{center}%
}
\title[H\"older Continuity for Noncommutative Polynomials]{H\"older Continuity of Cumulative Distribution Functions for Noncommutative Polynomials under Finite Free Fisher Information}
\author[M. Banna]{Marwa Banna}
\address{Saarland University, Faculty of Mathematics, D-66123 Saarbr\"ucken, Germany}
\email{banna@math.uni-sb.de}
\author[T. Mai]{Tobias Mai}
\address{Saarland University, Faculty of Mathematics, D-66123 Saarbr\"ucken, Germany}
\email{mai@math.uni-sb.de}
\date{\today}
\thanks{This work has been supported by the ERC Advanced Grant NCDFP 339760 held by Roland Speicher.}
\keywords{free Fisher information and entropy, noncommutative polynomials, H\"older continuity, Kolmogorov distance, random matrices, Gibbs laws. }
\subjclass[2000]{46L54, 60B10, 60B20}
\begin{document}

\begin{abstract}
This paper contributes to the current studies on regularity properties of noncommutative distributions in free probability theory. More precisely, we consider evaluations of selfadjoint noncommutative polynomials in noncommutative random variables that have finite non-microstates free Fisher information, highlighting the special case of Lipschitz conjugate variables. For the first time in this generality, it is shown that the analytic distributions of those evaluations have H\"older continuous cumulative distribution functions with an explicit H\"older exponent that depends only on the degree of the considered polynomial. For linear polynomials, we reach in the case of finite non-microstates free Fisher information the optimal H\"older exponent $\frac{2}{3}$, and get Lipschitz continuity in the case of Lipschitz conjugate variables. In particular, our results guarantee that such polynomial evaluations have finite logarithmic energy and thus finite (non-microstates) free entropy, which partially settles a conjecture of Charlesworth and Shlyakhtenko \cite{CS16}.

We further provide a very general criterion that gives for weak approximations of measures having H\"older continuous cumulative distribution functions explicit rates of convergence in terms of the Kolmogorov distance.

Finally, we combine these results to study the asymptotic eigenvalue distributions of polynomials in GUEs or matrices with more general Gibbs laws. For Gibbs laws, this extends the corresponding result obtained in \cite{GS09} from convergence in distribution to convergence in Kolmogorov distance; in the GUE case, we even provide explicit rates, which quantify results of \cite{HT05,HST06} in terms of the Kolmogorov distance.
\end{abstract}

\maketitle

\section{Introduction}

Noncommutative distributions are at the heart of noncommutative probability theory and of free probability theory in particular. These, in general, purely combinatorial objects allow some very elegant translation of various questions arising for instance in operator algebra or random matrix theory into the unifying language of noncommutative probability theory; in this way, they can build bridges between originally unrelated fields and often also make available tools from free probability theory in those areas.

Within the algebraic frame of a \emph{noncommutative probability space} $(\A,\phi)$, i.e., a unital complex algebra $\A$ with a distinguished unital linear functional $\phi:\A\to\C$, the \emph{(joint) noncommutative distribution} of a tuple $X=(X_1,\dots,X_n)$ consisting of finitely many noncommutative random variables $X_1,\dots,X_n\in\A$ is given as the linear functional
$$\mu_X:\ \C\langle x_1,\dots,x_n\rangle \to \C,\qquad P \mapsto \phi(P(X)).$$
It is defined on the algebra $\C\langle x_1,\dots,x_n\rangle$ of noncommutative polynomials in $n$ formal noncommuting variables $x_1,\dots,x_n$.

In practice, one often works -- as we will do in the following -- in the more analytic setting of a \emph{tracial $W^\ast$-probability space} $(\M,\tau)$, i.e., a von Neumann algebra $\M$ that is endowed with some faithful normal tracial state $\tau: \M\to\C$.
If tuples $X=(X_1,\dots,X_n)$ of noncommutative random variables $X_1,\dots,X_n$ in $\M$ are considered, then their joint noncommutative distribution $\mu_X$ determines the generated von Neumann algebra $\vN(X_1,\dots,X_n)$ up to isomorphism. Thus, $\mu_X$ provides a kind of combinatorial ``barcode'' for $\vN(X_1,\dots,X_n)$ and consequently contains all spectral properties of $X_1,\dots,X_n$; however, the challenging question is how to read off those information from a given $\mu_X$.

Here, we are concerned with regularity properties of noncommutative distributions. 

In a series of papers \cite{Voi93,Voi94,Voi96,Voi97,Voi98,Voi99}, Voiculescu developed free probability analogues of the classical notions of Fisher information and entropy; see \cite{Voi02} for a survey. Here, we follow the non-microstates approach that Voiculescu presented in \cite{Voi98,Voi99}. To tuples $X=(X_1,\dots,X_n)$ of noncommutative random variables in a tracial $W^\ast$-probability space $(\M,\tau)$, he associates the \emph{non-microstates free Fisher information $\Phi^\ast(X)$} and the \emph{non-microstates free entropy $\chi^\ast(X)$}. Each of those numerical quantities, if finite, gives some rich structure to the joint noncommutative distribution $\mu_X$, however, without determining it completely.
Into that context fits also the notion of \emph{Lipschitz conjugate variables}, which provides a strengthening of $\Phi^\ast(X)<\infty$; it was introduced in \cite{Dab14} and taken up again in \cite{DI16}.
While it is the common viewpoint that each of those conditions entails some strong regularity of $\mu_X$, making this guess precise remains quite intricate.

One of the major drawbacks in that respect is the lack of an effective analytic machinery to handle noncommutative distributions in a way which is similar to the measure theoretic description of distributions in classical probability theory.

Such tools are available only in very limited situations. Even in the strong analytic framework of a tracial $W^\ast$-probability space $(\M,\tau)$, we typically must restrict ourselves to the case of a single noncommutative random variable $X\in\M$ in order to gain such an analytic description. For instance, if the considered operator $X$ is selfadjoint, then its combinatorial noncommutative distribution can be encoded by some compactly supported Borel probability measure $\mu_X$ on the real line $\R$, called the \emph{analytic distribution of $X$}; more precisely, the analytic distribution $\mu_X$ is uniquely determined among all Borel measures on $\R$ by the requirement that
$$\tau(X^k) = \int_\R t^k\, d\mu_X(t) \qquad\text{for all integers $k\geq 0$}.$$
For the sake of completeness, we note that this notion can be generalized to normal operators $X$, resulting in a compactly supported Borel probability measure on the complex plane $\C$; on the other hand, for operators that fail to be normal, on can study instead its so-called Brown measure.

Accordingly, it is not even clear what ``regularity'' should mean for general noncommutative distributions.

In recent years, evaluations of ``noncommutative test functions'' such as noncommutative polynomials or noncommutative rational functions were successfully developed as a kind of substitute for the measure theoretic description in order to overcome those difficulties. In fact, each such evaluation produces a single noncommutative random variable whose analytic distribution can be studied by measure theoretic means. The guiding idea is that the larger the considered class of test functions is, the more information one gains about the underlying multivariate noncommutative distribution.

In this way, also the aforementioned problem becomes treatable: ``regularity'' of noncommutative distributions $\mu_X$, imposed by conditions such as $\Phi^\ast(X) < \infty$ and $\chi^\ast(X) > - \infty$, is understood as being reflected in properties of the analytic distributions $\mu_{f(X)}$ that arise from evaluations $f(X)$ of noncommutative test functions $f$.

Several results have already been obtained in that direction; see, for instance, \cite{SS15,CS16,MSW17,MSY18}. We elaborate here on the H\"older continuity of cumulative distribution functions of analytic distributions associated to noncommutative polynomial evaluations in variables having finite Fisher information.

Recall that the \emph{cumulative distribution function $\F_\mu$} of a probability measure $\mu$ on $\R$ is the function $\F_\mu: \R \to [0,1]$ that is defined by $\F_\mu(t) := \mu((-\infty,t])$; if the analytic distribution $\mu_X$ of a single selfadjoint noncommutative random variable $X$ in $(\M,\tau)$ is considered, we will abbreviate $\F_{\mu_X}$ by $\F_X$.
We say that $\F_\mu$ is \emph{H\"older continuous with exponent $\beta\in(0,1]$} if there exists a constant $C>0$, called a \emph{H\"older coefficient of $\F_\mu$}, such that
\begin{equation}\label{eq:Hoelder_condition}
|\F_\mu(t)-\F_\mu(s)| \leq C |t-s|^{\beta} \qquad\text{for all $s,t\in\R$}.
\end{equation}

By our first main result, we establish for the first time H\"older continuity with explicit values for both the H\"older coefficients and the H\"older exponents for each non-constant selfadjoint noncommutative polynomial.

\begin{theorem}\label{thm:Hoelder_continuity}
Let $X_1,\dots,X_n$ be selfadjoint noncommutative random variables in some tracial $W^\ast$-probability space $(\M,\tau)$. Further, suppose that $P\in\C\langle x_1,\dots,x_n\rangle$ is a selfadjoint noncommutative polynomial of degree $d\geq 1$. Consider the associated selfadjoint noncommutative random variable $Y:=P(X_1,\dots,X_n)$ in $\M$. Then the following statements hold true for the cumulative distribution function $\F_Y$ of the analytic distribution $\mu_Y$ of $Y$:
\begin{enumerate}
 \item If $\Phi^\ast(X_1,\dots,X_n)<\infty$, i.e., if $(X_1,\dots,X_n)$ admits conjugate variables $(\xi_1,\dots,\xi_n)$, then $\F_Y$ is H\"older continuous with exponent $\frac{2}{3(2^d-1)}$.
 \item If $(X_1,\dots,X_n)$ admits Lipschitz conjugate variables $(\xi_1,\dots,\xi_n)$, then $\F_Y$ is H\"older continuous with exponent $\frac{1}{2^d-1}$.
\end{enumerate}
In each of those cases, \eqref{eq:Hoelder_constant-2a} and \eqref{eq:Hoelder_constant-2b}, respectively, provide explicit H\"older coefficients.
\end{theorem}

Theorem \ref{thm:Hoelder_continuity} has some important consequences.
It was shown in \cite{Jam15} that Borel probability measures with H\"older continuous cumulative distribution functions have finite logarithmic energy. If the analytic distribution $\mu_Y$ of a selfadjoint noncommutative random variable $Y\in\M$ is considered, then the latter quantity is known to be closely related to the non-microstates free entropy $\chi^\ast(Y)$, which coincides in that case with the microstates free entropy $\chi(Y)$; see \cite{Voi98}.
Thus, in summary, we obtain the following result.

\begin{theorem}\label{thm:finite_entropy}
Let $(\M,\tau)$ be a tracial $W^\ast$-probability space and suppose that $X_1,\dots,X_n$ are selfadjoint noncommutative random variables in $\M$ satisfying $\Phi^\ast(X_1,\dots,X_n)<\infty$. Then, for every selfadjoint noncommutative polynomial $P\in\C\langle x_1,\dots,x_n\rangle$ which is non-constant, we have that
$$\chi^\ast(P(X_1,\dots,X_n)) > -\infty.$$
\end{theorem}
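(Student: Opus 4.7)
The plan is to reduce the statement to Theorem \ref{thm:Hoelder_continuity} via the two ingredients already flagged in the discussion immediately before it. First I would set $Y := P(X_1,\dots,X_n)$ and note that since $P$ is non-constant we have $d := \deg P \geq 1$, so Theorem \ref{thm:Hoelder_continuity} applies and delivers the H\"older continuity of the cumulative distribution function $\F_Y$ of the analytic distribution $\mu_Y$, with the strictly positive exponent $\alpha := \frac{2}{2^{d+2}-5}$.

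Next I would invoke the result from \cite{Jam15} quoted just before the statement: a Borel probability measure on $\R$ with a H\"older continuous cumulative distribution function (of some exponent $\alpha>0$) has finite logarithmic energy,
$$I(\mu_Y) \;:=\; \iint_{\R \times \R} \log\frac{1}{|s-t|}\, d\mu_Y(s)\, d\mu_Y(t) \;<\; +\infty.$$
Because $Y$ is bounded and $\mu_Y$ hence compactly supported, the positive part of $\log|s-t|$ is bounded above on $\supp\mu_Y \times \supp\mu_Y$, so finiteness of $I(\mu_Y)$ is equivalent to the double integral $\iint \log|s-t|\, d\mu_Y(s)\, d\mu_Y(t)$ being strictly greater than $-\infty$.

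The last step is to feed this into Voiculescu's coincidence \cite{Voi98} between $\chi^\ast(Y)$ and the microstates free entropy $\chi(Y)$ in the single-variable case, combined with the classical one-variable formula
$$\chi(Y) \;=\; \iint \log|s-t|\, d\mu_Y(s)\, d\mu_Y(t) + \tfrac{3}{4} + \tfrac{1}{2}\log(2\pi).$$
The preceding bound then yields $\chi^\ast(Y) > -\infty$, which is the claim.

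The genuine work has of course been absorbed into Theorem \ref{thm:Hoelder_continuity}, and beyond that the argument is essentially bookkeeping, so I would not expect a serious obstacle here. The only point deserving explicit mention is the role of the non-constancy hypothesis on $P$: it is used precisely to force $d \geq 1$ and hence $\alpha > 0$, and it is genuinely necessary, because a constant $P$ would produce a Dirac mass $\mu_Y$ for which $\chi^\ast(Y) = -\infty$.
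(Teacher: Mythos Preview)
Your proposal is correct and follows essentially the same route as the paper: apply Theorem \ref{thm:Hoelder_continuity} to obtain H\"older continuity of $\F_Y$, invoke \cite{Jam15} to deduce finiteness of the logarithmic energy $I(\mu_Y)$, and then use \eqref{eq:entropy-log_energy} together with Voiculescu's identification $\chi^\ast(Y)=\chi(Y)$ in the one-variable case to conclude $\chi^\ast(Y)>-\infty$. The paper additionally packages this into Theorem \ref{thm:finite_entropy_bound}, which records the explicit upper bound $I(\mu_Y)\leq 2C/\beta$ coming from \cite{Jam15}, but for the bare statement of Theorem \ref{thm:finite_entropy} your argument is complete.
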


This provides a partial and conceptual answer to a question formulated in \cite{CS16}. There, it is conjectured that the conclusion of Theorem \ref{thm:finite_entropy}, i.e., that $\chi^\ast(P(X))>-\infty$ holds for every non-constant noncommutative polynomial $P$, remains true under the weaker condition $\chi^\ast(X)>-\infty$ on $X=(X_1,\dots,X_n)$. 
At first sight, as we have strengthened that condition to $\Phi^\ast(X)<\infty$, it might be tempting to guess that this should even enforce $\Phi^\ast(P(X)) < \infty$. This guess, however, is much too optimistic, as one already sees in the case of a single variable: for a standard semicircular variable $S$, we have that $\Phi^\ast(S) < \infty$, in fact with Lipschitz conjugate variables, while $S^2$ is a free Poisson distribution, for which we know that $\Phi^\ast(S^2)=\infty$.
Thus, also under the stronger assumption that $X$ admits Lipschitz conjugate variables, one cannot hope in general for more than $\chi^\ast(P(X))>-\infty$.

For linear polynomials, it turns out that the H\"older exponents provided by Theorem \ref{thm:Hoelder_continuity} are optimal; see Example \ref{ex:optimality}. The precise statement reads as follows.

\begin{theorem}\label{thm:Lipschitz_absolute_continuity}
Let $(\M,\tau)$ be a tracial $W^\ast$-probability space and suppose that $X_1,\dots,X_n$ are selfadjoint noncommutative random variables in $\M$. Consider any operator $Y$ of the form
$$Y = a_0 + a_1 X_1 + \dots + a_n X_n$$
with $a_0\in\R$ and non-zero $(a_1,\dots,a_n) \in \R^n$. Then the following statements hold true for the cumulative distribution function $\F_Y$ of $Y$:
\begin{enumerate}
 \item If $\Phi^\ast(X_1,\dots,X_n)<\infty$, then $\F_Y$ is H\"older continuous with exponent $\frac{2}{3}$, where the value $\frac{2}{3}$ is optimal.
 \item If $(X_1,\dots,X_n)$ admits Lipschitz conjugate variables, then the cumulative distribution function $\F_Y$ is Lipschitz continuous on $\R$. In particular, the analytic distribution $\mu_Y$ of $Y$ is absolutely continuous with respect to the Lebesgue measure on $\R$ and has a bounded density.
\end{enumerate}
\end{theorem}

The rest of this paper is organized as follows.

In Section \ref{sec:free_differential_operators}, we recall some basic facts from the $L^2$-theory for free differential operators as initiated by Voiculescu.
The proof of Theorem \ref{thm:Hoelder_continuity} will be given in Section \ref{sec:Hoelder_continuity}; for that purpose, we will first collect and extend there some of the more recent results on which the proof builds.
In Section \ref{sec:finite_entropy}, we present the proof of Theorem \ref{thm:finite_entropy}.

Of independent interest is Section \ref{sec:Kolmogorov}, which is devoted to the phenomenon that convergence in distribution of Borel probability measures on $\R$ to a limit measure with H\"older continuous cumulative distribution function automatically improves itself to convergence in Kolmogorov distance. With Theorems \ref{thm:Hoelder_criterion} and \ref{thm:Hoelder_criterion_compact}, we prove quantified versions thereof that provide explicit rates of convergence for the Kolmogorov distance.

In the last Section \ref{sec:random_matrices}, we combine our previously obtained results and apply them to a wide class of random matrix models. More precisely, we consider tuples $(X_1^{(N)},\dots,X_n^{(N)})$ of $N\times N$ selfadjoint random matrices following some Gibbs law and whose asymptotic behavior as $N\to\infty$ is described by a tuple $(X_1,\dots,X_n)$ of selfadjoint noncommutative random variables with the property $\Phi^\ast(X_1,\dots,X_n) < \infty$. We then prove, in Corollaries \ref{cor:random_matrices_polynomial} and \ref{cor:random_matrices_block}, that the limiting eigenvalue distribution of a random matrix of the form $Y^{(N)}=f(X_1^{(N)},\dots,X_n^{(N)})$, for certain ``noncommutative functions'' $f$, has a H\"older continuous cumulative distribution function and that this convergence holds with respect to the Kolmogorov distance. Finally, we provide in Corollaries \ref{cor:block-GUE} and \ref{cor:p-GUE} rates of convergence of the Kolmogorov distance for the particular cases where $(X_1^{(N)},\dots,X_n^{(N)})$ is a tuple of independent GUE random matrices. The latter results rely on linearization techniques that are outlined in Appendix \ref{sec:linearization}.


\tableofcontents


\section{A glimpse on the $L^2$-theory for free differential operators}\label{sec:free_differential_operators}

This section is devoted to the $L^2$-theory for free differential operators, which underlies the non-microstates approach to free entropy as developed by Voiculescu in \cite{Voi98,Voi99}. For the reader's convenience, we recall here the needed terminology and some fundamental results.

\subsection{Noncommutative polynomials and noncommutative derivatives}

As usual, we will denote by $\C\langle x_1,\dots,x_n\rangle$ the unital complex algebra of \emph{noncommutative polynomials} in $n$ formal noncommuting variables $x_1,\dots,x_n$. Let us recall that any noncommutative polynomial $P\in \C\langle x_1,\dots,x_n\rangle$ can be written in the form
\begin{equation}\label{eq:ncpoly}
P = \sum^d_{k=0} \sum_{1\leq i_1,\dots,i_k\leq n} a_{i_1,\dots,i_k}\, x_{i_1} \cdots x_{i_k}.
\end{equation}
for some integer $d\geq 0$ and coefficients $a_{i_1,\dots,i_k} \in \C$; if there exist $1\leq i_1,\dots,i_d\leq n$ such that $a_{i_1,\dots,i_d} \neq 0$, then we say that \emph{$P$ has degree $d$} and we put $\deg(P) := d$.

Note that $\C\langle x_1,\dots,x_n\rangle$ becomes a $\ast$-algebra if it is endowed with the involution defined by
$$P^\ast = \sum^d_{k=0} \sum_{1\leq i_1,\dots,i_k\leq n} \overline{a_{i_1,\dots,i_k}}\, x_{i_k} \cdots x_{i_1}$$
for every noncommutative polynomial $P\in\C\langle x_1,\dots,x_n\rangle$ which is written in the form \eqref{eq:ncpoly}.

Elements in the algebraic tensor product $\C\langle x_1,\dots,x_n\rangle \otimes \C\langle x_1,\dots,x_n\rangle$ will be called \emph{bi-polynomials} in the following. Note that $\C\langle x_1,\dots,x_n\rangle \otimes \C\langle x_1,\dots,x_n\rangle$ forms by definition a unital complex algebra and moreover a $\C\langle x_1,\dots,x_n\rangle$-bimodule with the natural left and right action determined by $P_1 \cdot (Q_1 \otimes Q_2) \cdot P_2 := (P_1 Q_1) \otimes (Q_2 P_2)$. 
Therefore, we may introduce on $\C\langle x_1,\dots,x_n\rangle$ the so-called \emph{noncommutative derivatives} $\partial_1,\dots,\partial_n$ as the unique derivations
$$\partial_j:\ \C\langle x_1,\dots,x_n\rangle \to \C\langle x_1,\dots,x_n\rangle \otimes \C\langle x_1,\dots,x_n\rangle,\qquad j=1,\dots,n,$$
with values in $\C\langle x_1,\dots,x_n\rangle \otimes \C\langle x_1,\dots,x_n\rangle$ that satisfy $\partial_j x_i = \delta_{i,j} 1 \otimes 1$ for all $i,j=1,\dots,n$.

\subsection{Conjugate systems and non-microstates free Fisher information}

Let $(\M,\tau)$ be a tracial $W^\ast$-probability space (i.e., a von Neumann algebra $\M$ that is endowed with a faithful normal tracial state $\tau: \M \to \C$) and consider $n$ selfadjoint noncommutative random variables $X_1,\dots,X_n\in \M$.
Throughout the following, we will denote in such cases by $\M_0\subseteq \M$ the von Neumann subalgebra that is generated by $X_1,\dots,X_n$; in order to simplify the notation, the restriction of $\tau$ to $\M_0$ will be denoted again by $\tau$.

In \cite{Voi98}, Voiculescu associated to the tuple $(X_1,\dots,X_n)$ the so-called \emph{non-microstates free Fisher information $\Phi^\ast(X_1,\dots,X_n)$}; note that, while he assumed for technical reasons in addition that $X_1,\dots,X_n$ do not satisfy any non-trivial algebraic relation over $\C$, it was shown in \cite{MSW17} that this constraint is not needed as an a priori assumption on $(X_1,\dots,X_n)$ but is nonetheless enforced a posteriori by some general arguments. We call $(\xi_1,\dots,\xi_n) \in L^2(\M_0,\tau)^n$ a \emph{conjugate system for $(X_1,\dots,X_n)$}, if the \emph{conjugate relation}
$$\tau\big(\xi_j P(X_1,\dots,X_n)\big) = (\tau \ootimes \tau)\big((\partial_j P)(X_1,\dots,X_n)\big)$$
holds for each $j=1,\dots,n$ and for all noncommutative polynomials $P\in\C\langle x_1,\dots,x_n\rangle$, where $\tau \ootimes\tau$ denotes the faithful normal tracial state that is induced by $\tau$ on the von Neumann algebra tensor product $\M \ootimes \M$. The conjugate relation implies that such a conjugate system, in case of its existence, is automatically unique; thus, one can define
$$\Phi^\ast(X_1,\dots,X_n) := \sum^n_{j=1} \|\xi_j\|_2^2$$
if a conjugate system $(\xi_1,\dots,\xi_n)$ for $(X_1,\dots,X_n)$ exists, and if there is no conjugate system for $(X_1,\dots,X_n)$, we put $\Phi^\ast(X_1,\dots,X_n) := \infty$.

\subsection{Free differential operators}

Suppose now that $\Phi^\ast(X_1,\dots,X_n) < \infty$ holds and let $(\xi_1,\dots,\xi_n)$ be the conjugate system for $X=(X_1,\dots,X_n)$. It was shown in \cite{MSW17} that $\ev_X: \C\langle x_1,\dots,x_n\rangle \to \C\langle X_1,\dots,X_n\rangle, P \mapsto P(X)$ constitutes under this hypothesis an isomorphism, so that the noncommutative derivatives induce unbounded linear operators
$$\partial_j:\ L^2(\M_0,\tau) \supseteq D(\partial_j) \to L^2(\M_0 \ootimes \M_0, \tau \ootimes \tau)$$
with domain $D(\partial_j) := \C\langle X_1,\dots,X_n\rangle$, which is the unital subalgebra of $\M_0$ generated by $X_1,\dots,X_n$. Since $\partial_j$ is densely defined, we may consider the adjoint operators
$$\partial_j^\ast:\ L^2(\M_0 \ootimes \M_0, \tau \ootimes \tau) \supseteq D(\partial_j^\ast) \to L^2(\M_0,\tau)$$
and we conclude from the conjugate relations that $1\otimes 1 \in D(\partial_j^\ast)$ with $\partial_j^\ast(1\otimes 1) = \xi_j$.

If restricted to its domain, each of the unbounded linear operator $\partial_j$ gives a $\C\langle X_1,\dots,X_n\rangle \otimes \C\langle X_1,\dots,X_n\rangle$-valued derivation on $\C\langle X_1,\dots,X_n\rangle$. 

From $1\otimes 1 \in D(\partial_j^\ast)$, it follows that $\C\langle X_1,\dots,X_n\rangle \otimes \C\langle X_1,\dots,X_n\rangle \subseteq D(\partial_j^\ast)$, which confirms that $\partial_j^\ast$ is densely defined thus yields that $\partial_j$ is closable (cf. \cite[Corollary 4.2]{Voi98}); we denote its closure by $\overline{\partial}_j$.

For each $w\in \C\langle X_1,\dots,X_n\rangle$ and for $j=1,\dots,n$, we have the remarkable bounds
\begin{equation}\label{eq:Dabrowski_bounds}
\|\partial_j(w \otimes 1)\|_2 \leq \|\xi_j\|_2 \|w\| \qquad\text{and}\qquad \|(\id\otimes\tau)(\partial_j w)\|_2 \leq 2 \|\xi_j\|_2 \|w\|,
\end{equation}
which were proven in \cite{Dab10}.

\subsection{Lipschitz conjugate variables}

Suppose again that $\Phi^\ast(X_1,\dots,X_n) < \infty$ and let $(\xi_1,\dots,\xi_n)$ be the conjugate system for $(X_1,\dots,X_n)$. We say that $(\xi_1,\dots,\xi_n)$ are \emph{Lipschitz conjugate variables} for $(X_1,\dots,X_n)$ if the two conditions $\xi_j \in D(\overline{\partial}_j)$ and $\overline{\partial}_j \xi_j \in \M_0 \ootimes \M_0$ are satisfied for each $j=1,\dots,n$.

This notion was introduced in \cite{Dab14}; in \cite{DI16}, it was shown that if $(X_1,\dots,X_n)$ admits Lipschitz conjugate variables, then the von Neumann algebra $\M_0$ that is generated by $X_1,\dots,X_n$ shares many properties with the free group factor $L(\mathbb{F}_n)$. Indeed, freely independent semicircular operators $S_1,\dots,S_n$ generate $L(\mathbb{F}_n)$ and they are the prototypical instance where Lipschitz conjugate variables exist.

If $(\xi_1,\dots,\xi_n)$ are Lipschitz conjugate variables for $X=(X_1,\dots,X_n)$, then necessarily $\xi_1,\dots,\xi_n\in \M_0$; see \cite[Section 5.1]{DI16}. Thus, we may define for $j=1,\dots,n$ the quantities $\gamma_j(X) := \|(\id\otimes\tau)(\overline{\partial}_j \xi_j)\|^{1/2}$ and $\tilde{\gamma}_j(X) := \|\xi_j\| + \|(\id\otimes\tau)(\overline{\partial}_j \xi_j)\|^{1/2}$, as well as
$$\Gamma^\ast(X) := \max_{j=1,\dots,n} (\gamma_j(X) + \tilde{\gamma}_j(X)) = \max_{j=1,\dots,n} \big(\|\xi_j\| + 2 \|(\id\otimes\tau)(\overline{\partial}_j \xi_j)\|^{1/2}\big).$$
As observed in \cite{Dab14}, the assumption of Lipschitz conjugate variables can be used to improve the bounds \eqref{eq:Dabrowski_bounds}; more precisely, according to the version proven in \cite{Mai15}, we have that
\begin{equation}\label{eq:Dabrowski_bounds_Lipschitz}
\|\partial_j^\ast(w \otimes 1)\|_2 \leq \gamma_j(X) \|w\|_2 \quad\text{and}\quad \|(\id\otimes\tau)(\partial_j w)\|_2 \leq \tilde{\gamma}_j(X) \|w\|_2.
\end{equation}

\subsection{Non-microstates free entropy}

It was shown in \cite{Voi98} that arbitrarily small perturbations of any tuple $(X_1,\dots,X_n)$ of selfadjoint operators in $\M$ by freely independent semicircular elements lead to finite non-microstates free Fisher information. Indeed, if $S_1,\dots,S_n$ are semicircular elements in $\M$ which are freely independent among themselves and also free from $\{X_1,\dots,X_n\}$, then \cite[Corollary 6.14]{Voi98} tells us that $(X_1+\sqrt{t}S_n,\dots,X_n+\sqrt{t}S_n)$ admits a conjugate system for each $t>0$ and we have the estimates
\begin{equation}\label{eq:Fisher_perturbation}
\frac{n^2}{C^2 + nt} \leq \Phi^\ast(X_1+\sqrt{t}S_1,\dots,X_n+\sqrt{t}S_n) \leq \frac{n}{t} \quad\text{for all $t>0$},
\end{equation}
where $C\geq 0$ is defined by $C^2 := \tau(X_1^2 + \dots + X_n^2)$; moreover, the function $t\mapsto \Phi^\ast(X_1+\sqrt{t}S_1,\dots,X_n+\sqrt{t}S_n)$, which is defined on $[0,\infty)$ and takes its values in $(0,\infty)$, is decreasing and right continuous.
Based on this observation, Voiculescu introduced in \cite{Voi98} the \emph{non-microstates free entropy} $\chi^\ast(X_1,\dots,X_n)$ of $X_1,\dots,X_n$ by
$$\chi^\ast(X_1,\dots,X_n) := \frac{1}{2} \int^\infty_0\Big(\frac{n}{1+t}-\Phi^\ast(X_1+\sqrt{t}S_1,\dots,X_n+\sqrt{t}S_n)\Big)\, dt + \frac{n}{2}\log(2\pi e).$$
Note that the left inequality in \eqref{eq:Fisher_perturbation} implies in particular that (cf. \cite[Proposition 7.2]{Voi98})
$$\chi^\ast(X_1,\dots,X_n) \leq \frac{n}{2}\log(2\pi e n^{-1} C^2).$$

Of particular interest is the case $n=1$ of a single noncommutative random variable $X=X^\ast \in \M$. It was shown in \cite[Proposition 7.6]{Voi98} that $\chi^\ast(X)$ coincides then with the microstates free entropy $\chi(X)$; for the latter quantity, it was found in \cite[Proposition 4.5]{Voi94} that
\begin{equation}\label{eq:entropy-log_energy}
\chi(X) = -I(\mu_X) + \frac{3}{4} + \frac{1}{2}\log(2\pi)
\end{equation}
holds, where $I(\mu_X)$ denotes the logarithmic energy of the analytic distribution $\mu_X$ of $X$.
Recall that the \emph{logarithmic energy} of a Borel probability measure $\mu$ on $\R$ is defined as
\begin{equation}\label{eq:log_energy}
I(\mu) := \int_\R \int_\R \log\frac{1}{|s-t|} \, d\mu(s)\, d\mu(t).
\end{equation}

\section{H\"older continuity under the assumption of finite free Fisher information}\label{sec:Hoelder_continuity}

Throughout the following, let $(\M,\tau)$ be a tracial $W^\ast$-probability space and let $X_1,\dots,X_n$ be selfadjoint noncommutative random variables in $\M$ that satisfy the regularity condition $\Phi^\ast(X_1,\dots,X_n)<\infty$; whenever we impose the stronger condition of Lipschitz conjugate variables, this will be stated explicitly.

The goal of this section is the proof of Theorem \ref{thm:Hoelder_continuity}. In doing so, we will follow ideas of \cite{CS16}, but with refined arguments similar to \cite{MSY18}. In fact, Theorem \ref{thm:Hoelder_continuity}, in the case $d=1$ of an affine linear polynomial, overlaps with the corresponding result of \cite{MSY18}, if applied to the scalar-valued case; both of them yield the same exponent $\beta=\frac{2}{3}$, which is optimal, as the following example shows.

\begin{example}\label{ex:optimality}
For $\gamma\in(0,1)$, we consider the Borel probability measure $\mu_\gamma$ on $\R$ which is given by $d\mu_\gamma(t) = \rho_\gamma(t)\, dt$ with the density $\rho_\gamma(t) := (1-\gamma) t^{-\gamma} \1_{[0,1]}(t)$. Let $X_\gamma$ be a selfadjoint noncommutative random variable in $(\M,\tau)$ whose distribution is given by $\mu_\gamma$. We know (cf. \cite[Proposition 3.5]{Voi98} and \cite[Proposition 8.18]{MS17}) that $\Phi^\ast(X_\gamma) < \infty$ if and only if $\rho_\gamma \in L^3(\R,dt)$, and the latter condition is satisfied precisely when $\gamma \in (0,\frac{1}{3})$. Moreover, for each $0<\delta\leq 1$, we have that $\mu_\gamma((0,\delta]) = \delta^{1-\gamma}$. Thus, for each $\beta \in (\frac{2}{3},1)$, we find by $X_\gamma$ for any $\gamma \in (1-\beta,\frac{1}{3})$ an operator with $\Phi^\ast(X_\gamma) < \infty$, but whose cumulative distribution function $\F_{X_\gamma}$ cannot be H\"older continuous with exponent $\beta$.
\end{example}

The proof of Theorem \ref{thm:Hoelder_continuity} will be given below, in Subsection \ref{subsec:Hoelder_continuity_proof}. This builds on several previous results, which we collect in Subsection \ref{subsec:Hoelder_continuity_ingredients}.

\subsection{Ingredients for the proof of Theorem \ref{thm:Hoelder_continuity}}\label{subsec:Hoelder_continuity_ingredients}

In this subsection, we lay the groundwork for the proof of Theorem \ref{thm:Hoelder_continuity} in Subsection \ref{subsec:Hoelder_continuity_proof}. We will remind the reader of some facts from free analysis. Most of the material presented here is well-known, but some of these results are slightly modified or extended in order to meet our needs.

\subsubsection{H\"older continuity via spectral projections}

The easy but crucial observation that underlies our approach is the following lemma which is \cite[Lemma 8.3]{MSY18} and which was inspired by \cite{CS16}.

\begin{lemma}\label{lem:Hoelder_criterion}
Let $Y$ be a selfadjoint noncommutative random variable in $(\M,\tau)$. If there exist $c>0$ and $\alpha>1$ such that
$$c \|(Y-s)p\|_2 \geq \|p\|_2^\alpha$$
holds for all $s\in\R$ and each spectral projection $p$ of $Y$, then the cumulative distribution function $\F_Y$ of the analytic distribution $\mu_Y$ of $Y$ is H\"older continuous with exponent $\beta := \frac{2}{\alpha-1}$; more precisely, we have that
$$|\F_Y(t)-\F_Y(s)| \leq c^\beta |t-s|^\beta \qquad\text{for all $s,t\in\R$}.$$
\end{lemma}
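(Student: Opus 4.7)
The plan is to exploit the hypothesis by feeding it spectral projections of $Y$ associated to half-open intervals, so that the $\|p\|_2^2$ on the right-hand side literally is the increment of the cumulative distribution function $\F_Y$ that we want to control.

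Concretely, I would fix $s < t$ in $\R$ and set $p := \1_{(s,t]}(Y)$, which is a spectral projection of $Y$. Since $\tau$ is a faithful normal tracial state and $\mu_Y$ is the analytic distribution of $Y$, we have the identification
\[
\|p\|_2^2 = \tau(p) = \mu_Y((s,t]) = \F_Y(t) - \F_Y(s).
\]
Next I would estimate $\|(Y-s)p\|_2$ using the Borel functional calculus: since $p$ is the spectral projection onto $(s,t]$, the operator $(Y-s)p$ is, via functional calculus, the function $y\mapsto (y-s)\1_{(s,t]}(y)$ evaluated at $Y$, and this function is bounded in modulus by $t-s$. Hence
\[
\|(Y-s)p\|_2^2 = \int_{(s,t]} (y-s)^2\, d\mu_Y(y) \leq (t-s)^2\, \mu_Y((s,t]) = (t-s)^2\, \|p\|_2^2.
\]

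Now I would plug this into the standing hypothesis applied to the particular choice of $s$ and this specific spectral projection $p$, obtaining
\[
\|p\|_2^\alpha \leq c\, \|(Y-s)p\|_2 \leq c\,(t-s)\, \|p\|_2.
\]
Assuming $\|p\|_2 > 0$ (otherwise $\F_Y(t) = \F_Y(s)$ and the bound is trivial), dividing by $\|p\|_2$ yields $\|p\|_2^{\alpha-1} \leq c\,(t-s)$, i.e., $\|p\|_2^2 \leq (c\,(t-s))^{2/(\alpha-1)} = c^\beta\,(t-s)^\beta$ with $\beta = \tfrac{2}{\alpha-1}$. Combined with the identification of $\|p\|_2^2$ with $\F_Y(t)-\F_Y(s)$, this gives
\[
|\F_Y(t)-\F_Y(s)| = \F_Y(t) - \F_Y(s) \leq c^\beta\,|t-s|^\beta
\]
for $s<t$, and the case $s > t$ follows by symmetry.

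There is really no serious obstacle: the content of the argument is entirely in choosing the right spectral projection so that $\|p\|_2^2$ becomes the CDF increment, and in using functional calculus to get the trivial bound $\|(Y-s)p\|_2\leq (t-s)\|p\|_2$. The only mild subtlety is that one must take $s$ in the hypothesis to be the left endpoint of the interval defining $p$ (rather than, say, the midpoint), in order to match the constant $c^\beta$ stated in the lemma — a different choice would merely rescale the constant.
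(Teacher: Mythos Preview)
Your argument is correct and is precisely the natural one: choose $p=\1_{(s,t]}(Y)$ so that $\|p\|_2^2=\F_Y(t)-\F_Y(s)$, bound $\|(Y-s)p\|_2\le (t-s)\|p\|_2$ via functional calculus, and rearrange. The paper itself does not give a proof but refers the reader to \cite{MSY18}; your write-up is essentially what one finds there.
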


For a detailed proof, we refer the interested reader to \cite{MSY18}.

\subsubsection{$L^2$-comparison of left- and right restrictions}

Another ingredient is a nice argument taken from \cite{CS16}; a streamlined version thereof is recorded in the following lemma. Because this is not stated explicitly in \cite{CS16} and since our situation is moreover slightly different, we provide here also the short proof of that statement.

\begin{lemma}\label{lem:comparison}
Let $P\in\C\langle x_1,\dots,x_n\rangle$ be a noncommutative polynomial of degree $d\geq1$. Then, for every non-zero projection $p$ in $\M$, there exists a non-zero projection $q$ in $\M$ such that
$$\tau(q) = \tau(p) \qquad\text{and}\qquad \| P(X_1,\dots,X_n)^\ast q \|_2 = \| P(X_1,\dots,X_n) p\|_2.$$
\end{lemma}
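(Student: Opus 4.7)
The plan is to build $q$ from $p$ via a unitary dilation of the polar decomposition of $A := P(X_1,\dots,X_n) \in \M$. Specifically, I would write $A = v|A|$ with $v\in\M$ the partial isometry of the polar decomposition, and set $e := v^*v$ (the right support of $A$, equal to the support projection of $|A|$) and $f := vv^*$ (the left support of $A$). Since $v$ witnesses the Murray--von Neumann equivalence $e \sim f$ inside the \emph{finite} von Neumann algebra $\M$, the complements satisfy $1-e \sim 1-f$ as well, so I can pick a partial isometry $w\in\M$ with $w^*w = 1-e$ and $ww^* = 1-f$. Then $V := v+w$ is a unitary in $\M$, and the orthogonality relations $we = 0$ and $|A|^2 w^* = 0$ (both immediate from $|A|^2 = e|A|^2 e$) give the two identities $V|A| = A$ and $V|A|^2 V^* = AA^*$ that are needed below.

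With this unitary dilation in hand, I would define $q := VpV^*$. It is then immediate that $q$ is a non-zero projection with $\tau(q) = \tau(p)$, by unitarity of $V$ and tracial cyclicity, and that $q\neq 0$ since $p\neq 0$. The desired $L^2$-equality reduces to the short computation
$$\|A^*q\|_2^2 \;=\; \tau(qAA^*q) \;=\; \tau\bigl(VpV^*\cdot V|A|^2V^*\cdot VpV^*\bigr) \;=\; \tau\bigl(p|A|^2 p\bigr) \;=\; \|Ap\|_2^2,$$
using $V^*V = 1$ and trace cyclicity in the middle, and $A^*A = |A|^2$ at the end.

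The main (in fact essentially only) obstacle is the extension of the polar-decomposition partial isometry $v$ to a full unitary of $\M$. This relies crucially on the finiteness of $\M$ that is built into the definition of a tracial $W^\ast$-probability space -- in a finite von Neumann algebra, $e\sim f$ forces $1-e \sim 1-f$ -- and would genuinely fail outside the finite setting. The hypothesis $\deg(P)\geq 1$ on the other hand plays no active role in the argument: the degenerate case $A=0$ is trivially handled by taking $q:=p$, while under the standing assumption $\Phi^\ast(X_1,\dots,X_n) < \infty$ the isomorphism result of \cite{MSW17} recalled in Section \ref{sec:free_differential_operators} ensures that $A\neq 0$ as soon as $P\neq 0$, so that the polar decomposition carries real information and the unitary dilation genuinely moves $p$.
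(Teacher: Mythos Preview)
Your proof is correct and takes a genuinely different route from the paper's. The paper writes $Y=P(X)=u|Y|$ and invokes the regularity results of \cite{CS16,MSW17} (valid under the standing hypothesis $\Phi^\ast(X)<\infty$, in fact already under $\delta^\star(X)=n$) to conclude that $Y$ has trivial kernel; this forces the polar partial isometry $u$ to be a genuine unitary, after which $q:=upu^\ast$ works immediately. You instead bypass the kernel result entirely: using only that $\M$ is finite, you extend the polar partial isometry $v$ to a unitary $V=v+w$ via Murray--von Neumann equivalence of the complementary projections $1-e\sim 1-f$, and then run the same conjugation $q:=VpV^\ast$. Your argument is therefore more self-contained and strictly more general --- it proves the lemma for \emph{any} $A\in\M$ in \emph{any} tracial $W^\ast$-probability space, with no regularity hypothesis on $X$ whatsoever, confirming your observation that $\deg(P)\geq 1$ is not actually needed. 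What the paper's approach buys is brevity (once the kernel result is in hand) and the side information that $P(X)$ is injective, which is of independent interest; what your approach buys is that the lemma stands on its own, independent of the Fisher-information machinery.
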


\begin{proof}
Put $Y:=P(X_1,\dots,X_n)$ and consider its polar decomposition $Y = u |Y|$ with a partial isometry $u\in\M$. As $P$ has degree $d\geq1$ and hence is non-constant, we conclude with the results that were obtained in \cite{CS16,MSW17} that $Y$ has no kernel, which finally yields that $u$ is in fact a unitary.
We define $q := upu^\ast$, which is clearly a non-zero projection in $\M$ satisfying $\tau(q) = \tau(p)$. Furthermore, we may check that
$$\|Y^\ast q\|_2 = \| |Y| u^\ast q \|_2 = \| |Y| p u^\ast \|_2 = \| u |Y| p\|_2 = \|Y p\|_2,$$
which concludes the proof.
\end{proof}

We note that the proof given above actually verifies the claim of Lemma \ref{lem:comparison} under the much weaker assumption $\delta^\star(X_1,\dots,X_n)=n$ where $\delta^\star$ is a variant of the non-microstates free entropy dimension defined in \cite[Section 4.1.1]{CS05}; this fact, however, is not needed in the following.

\subsubsection{A quantitative reduction argument}

We recall \cite[Proposition 3.7]{MSW17}. It is this result which allows us to weaken the assumptions that in \cite{CS16} were imposed on $X_1,\dots,X_n$ to finiteness of free Fisher information.

In the sequel, we denote by $(\xi_1,\dots,\xi_n)$ the conjugate system for $X=(X_1,\dots,X_n)$. As before, $\M_0$ will stand for the von Neumann subalgebra of $\M$ that is generated by $X_1,\dots,X_n$, i.e., $\M_0 := \vN(X_1,\dots,X_n)$.

\begin{proposition}\label{prop:Fisher-bound}
Let $P\in\C\langle x_1,\dots,x_n \rangle$ be a (not necessarily selfadjoint) noncommutative polynomial. For all $u,v\in \M_0$, we have that
\begin{equation}\label{eq:Fisher-bound}
|\langle v^\ast (\partial_i P)(X) u, w_1 \otimes w_2 \rangle| \leq 4 \|\xi_i\|_2 \bigl(\|P(X)u\|_2 \|v\| + \|u\| \|P(X)^\ast v\|_2\bigr) \|w_1\| \|w_2\|
\end{equation}
for all $w_1,w_2\in \C\langle X_1,\dots,X_n\rangle$ and $i=1,\dots,n$.
\end{proposition}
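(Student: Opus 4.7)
The plan rests on three ideas: reduce to elementary tensors via the definition of $\|\cdot\|_\pi$; use the Leibniz rule together with the conjugate relation to convert the $\partial_i$-pairing into a $\xi_i$-pairing; and apply Cauchy-Schwarz so that the $L^2$-norms $\|P(X)u\|_2$ and $\|P(X)^*v\|_2$ appear. A density argument finally extends the bound from polynomial $u,v$ to arbitrary $u,v\in\M_0$.

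By bilinearity of the left-hand side in $Q$ and the very definition of the projective norm, it suffices to prove the estimate for an elementary tensor $Q = Q_1 \otimes Q_2$ with $\|Q(X)\|_\pi$ replaced by $\|Q_1(X)\|\,\|Q_2(X)\|$, and then sum and infimize over decompositions. A direct tensor-factor computation combined with traciality of $\tau\ootimes\tau$ in each slot yields
\begin{equation*}
\langle v^*(\partial_i P)(X) u,\; Q_1(X)\otimes Q_2(X)\rangle_2 \;=\; (\tau \ootimes \tau)\bigl(Q_1(X)^* v^*\cdot (\partial_i P)(X)\cdot u\, Q_2(X)^*\bigr),
\end{equation*}
where the dots denote the bimodule action.

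Assume temporarily that $v,u$ are themselves polynomials in $X_1,\dots,X_n$, so that $R := Q_1^* v^* P u Q_2^*$ is a polynomial in $X$. The Leibniz rule for $\partial_i$ splits $\partial_i R$ as
\begin{equation*}
\partial_i R \;=\; \partial_i(Q_1^* v^*)\cdot P u Q_2^* \;+\; Q_1^* v^* \cdot (\partial_i P) \cdot u Q_2^* \;+\; Q_1^* v^* P\cdot \partial_i(u Q_2^*).
\end{equation*}
Applying $\tau\ootimes\tau$ and invoking the conjugate relation $\tau(\xi_i R(X))=(\tau\ootimes\tau)(\partial_i R)(X)$ expresses the target quantity as $\tau(\xi_i R(X))$ minus two correction terms. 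Cauchy-Schwarz on the main piece gives
\begin{equation*}
|\tau(\xi_i R(X))| \;\leq\; \|\xi_i\|_2\,\|Q_1(X)\|\,\|Q_2(X)\|\,\bigl(\|v\|\,\|P(X)u\|_2 + \|u\|\,\|P(X)^*v\|_2\bigr),
\end{equation*}
using the two natural groupings $R = (Q_1^* v^*)(P(X)u)(Q_2^*)$ and $R = (Q_1^*)(v^* P(X))(u Q_2^*)$. The correction terms are treated by the same mechanism: expand $\partial_i(Q_1^* v^*) = (\partial_i Q_1^*)\cdot v^* + Q_1^* \cdot \partial_i(v^*)$ and symmetrically for $\partial_i(u Q_2^*)$, re-apply the conjugate relation to the pieces where $\partial_i$ acts only on the polynomial factors $Q_1^*$ or $Q_2^*$, and use Cauchy-Schwarz with the same groupings. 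Collecting four such Cauchy-Schwarz contributions produces the constant $4$.

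For arbitrary $v,u\in\M_0$, the Kaplansky density theorem supplies polynomial approximations $v_k\to v,\, u_k\to u$ strongly with $\|v_k\|\leq\|v\|$ and $\|u_k\|\leq\|u\|$; then $P(X)u_k\to P(X)u$ and $P(X)^*v_k\to P(X)^*v$ in $L^2$, every quantity in the inequality is continuous along these sequences, and the bound passes to the limit. The chief technical obstacle is the bookkeeping for the correction terms: the derivatives $\partial_i(v^*)$ and $\partial_i(u)$ naturally appear via Leibniz but must be absorbed so that only the operator norms $\|v\|,\|u\|$ and the $L^2$-norms $\|P(X)u\|_2,\|P(X)^*v\|_2$ survive in the final estimate; if any genuine derivative of $v$ or $u$ remained, the Kaplansky density step would collapse. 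This reorganization is the genuine content of \cite[Proposition~3.7]{MSW17}.
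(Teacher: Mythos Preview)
The paper does not prove this proposition; it cites \cite[Proposition~3.7]{MSW17} and explicitly notes that the argument there relies on results of Voiculescu \cite{Voi98} and Dabrowski \cite{Dab10}. Your outline correctly identifies the overall architecture (reduction to elementary tensors, Leibniz rule plus conjugate relation, Cauchy--Schwarz, Kaplansky density), but the step you flag as ``bookkeeping'' is the whole proof, and the way you propose to carry it out does not work.

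Concretely: after isolating the middle Leibniz term you are left with correction terms such as $(\tau\ootimes\tau)\bigl((\partial_i Q_1^*)(X)\cdot v^* P(X) u\, Q_2(X)^*\bigr)$. Your suggestion is to ``re-apply the conjugate relation'' to this. But the conjugate relation applies to $(\tau\ootimes\tau)(\partial_i A)(X)$ for a \emph{single} polynomial $A$, not to $(\tau\ootimes\tau)\bigl((\partial_i Q_1^*)\cdot S\bigr)$ with $S$ a separate factor. Rewriting via $\partial_i(Q_1^* S) = (\partial_i Q_1^*)\cdot S + Q_1^*\cdot\partial_i S$ only reintroduces $\partial_i S$, which again contains $\partial_i v^*$ and $\partial_i u$; you go in circles. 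The same obstruction blocks the terms where $\partial_i$ hits $v^*$ or $u$ directly. So the correction terms are not handled, and your final sentence --- that the ``reorganization is the genuine content of \cite[Proposition~3.7]{MSW17}'' --- is circular, since that \emph{is} the proposition you are asked to prove.

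What is actually needed, and what the paper is pointing to by citing \cite{Voi98} and \cite{Dab10}, is Voiculescu's formula
\[
\partial_i^*\bigl(P_1(X)\otimes P_2(X)\bigr) = P_1(X)\,\xi_i\,P_2(X) - (\id\otimes\tau)(\partial_i P_1)(X)\,P_2(X) - P_1(X)\,(\tau\otimes\id)(\partial_i P_2)(X)
\]
together with the a priori $L^2$--bounds of the type $\|(\id\otimes\tau)(\partial_i A)(X)\|_2 \leq \|\xi_i\|_2\,\|A(X)\|$ (and the symmetric one). These bounds are precisely what allow the correction terms to be estimated using only $\|vQ_1(X)\|$, $\|Q_2(X)u^*\|$ and the $L^2$--norms $\|P(X)u\|_2$, $\|P(X)^* v\|_2$, \emph{without} ever differentiating $u$ or $v$. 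Once you have them, one rewrites $\langle v^*(\partial_i P)(X)u,\,Q_1(X)\otimes Q_2(X)\rangle = \langle (\partial_i P)(X),\,(vQ_1(X))\otimes(Q_2(X)u^*)\rangle$, applies $\partial_i^*$ via the formula above, and bounds each of the resulting terms; this is where the constant $4$ comes from. The Kaplansky density step is then legitimate.
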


The proof of Proposition \ref{prop:Fisher-bound} can be found in \cite{MSW17}; a matrix-valued variant thereof was proven in \cite{MSY18}. In either case, the proof makes heavily use of results from \cite{Voi98} and \cite{Dab10}. An alternative approach building on \cite{CS05} was presented in \cite{CS16}. An extension to the case of more general derivations, with an eye towards free stochastic calculus, is provided in \cite{Mai15}.

We show next an important consequence of Proposition \ref{prop:Fisher-bound}, which will be used in the sequel. For that purpose, let us introduce
\begin{itemize}
 \item for every $v\in\M$ the linear functional $$\phi_v:\ \C\langle x_1,\dots,x_n\rangle \to \C,\quad P \mapsto \tau(v^\ast P(X)),$$
 \item and for every $v\in\M$ and $i=1,\dots,n$ the linear map $$\Delta_{v,i}:\ \C\langle x_1,\dots,x_n\rangle \to \C\langle x_1,\dots,x_n\rangle,\quad P \mapsto (\phi_v \otimes \id)(\partial_i P).$$
\end{itemize}
Note that both $\phi_v$ and $\Delta_{v,i}$ depend implicitly on $X$, but in order to keep the notation as simple as possible, we prefer not to indicate that dependency as $X$ is fixed throughout our discussion.

\begin{corollary}\label{cor:Fisher-bound_reduced}
Let $P\in\C\langle x_1,\dots,x_n \rangle$ be a (not necessarily selfadjoint) noncommutative polynomial. For all $u,v\in\M_0$, we have for $i=1,\dots,n$ that
\begin{equation}\label{eq:Fisher-bound_reduced}
\|(\Delta_{v,i} P)(X) u\|_2^2 \leq 4 \|\xi_i\|_2 \bigl(\|P(X)u\|_2 \|v\| + \|u\| \|P(X)^\ast v\|_2\bigr) \|(\Delta_{v,i} P)(X) u\|
\end{equation}
and
\begin{equation}\label{eq:Fisher-bound_reduced_trace}
|\tau((\Delta_{v,i} P)(X) u)| \leq 4 \|\xi_i\|_2 \bigl(\|P(X)u\|_2 \|v\| + \|u\| \|P(X)^\ast v\|_2\bigr).
\end{equation}
\end{corollary}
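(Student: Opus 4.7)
My plan is to deduce both inequalities from Proposition \ref{prop:Fisher-bound} by specializing the bi-polynomial $Q$ and the vectors $u,v$ appropriately. The conceptual picture is that $\Delta_{v,i}$ is $\partial_i$ followed by the contraction $\phi_v \otimes \id$, so any bilinear form involving $(\Delta_{v,i} P)(X)$ should reduce to one involving $(\partial_i P)(X)$ which Proposition \ref{prop:Fisher-bound} already controls.

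I would treat \eqref{eq:Fisher-bound_reduced_trace} first, as it is the cleaner of the two. Writing $\partial_i P = \sum_k A_k \otimes B_k$ and unpacking the definition of $\Delta_{v,i}$, the bimodule action of $\C\langle X_1,\dots,X_n\rangle$ on the algebraic tensor product gives the identity
$$\tau\bigl((\Delta_{v,i}P)(X) u\bigr) = (\tau \ootimes \tau)\bigl(v^\ast \cdot (\partial_i P)(X) \cdot u\bigr) = \bigl\langle v^\ast \cdot (\partial_i P)(X) \cdot u,\, 1 \otimes 1\bigr\rangle,$$
after which Proposition \ref{prop:Fisher-bound} applied with $Q = 1 \otimes 1$, whose projective norm equals $1$, yields \eqref{eq:Fisher-bound_reduced_trace} in one line.

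For \eqref{eq:Fisher-bound_reduced}, I would bootstrap from \eqref{eq:Fisher-bound_reduced_trace}. Abbreviating $G := (\Delta_{v,i}P)(X) \in \M_0$, traciality rewrites the squared $L^2$-norm as a single trace $\|Gu\|_2^2 = \tau\bigl(G \cdot (uu^\ast G^\ast)\bigr)$. Applying \eqref{eq:Fisher-bound_reduced_trace} with the new test vector $\tilde{u} := uu^\ast G^\ast \in \M_0$, together with the elementary bounds $\|\tilde{u}\| \leq \|u\|^2 \|G\|$ and $\|P(X)\tilde{u}\|_2 \leq \|P(X)u\|_2\,\|u\|\,\|G\|$ coming from H\"older and submultiplicativity of the operator norm, produces
$$\|Gu\|_2^2 \leq 4\|\xi_i\|_2\, \|u\|\, \|G\| \bigl(\|P(X)u\|_2 \|v\| + \|u\| \|P(X)^\ast v\|_2\bigr).$$
It then remains to absorb $\|G\|$ into $\|v\|\,\|(\partial_i P)(X)\|_\pi$, which I expect to be the one genuinely nontrivial bookkeeping step: starting from any representation $\partial_i P = \sum_k A_k \otimes B_k$, the estimate $|\tau(v^\ast A_k(X))| \leq \|v\|\, \|A_k(X)\|$ gives $\|G\| \leq \|v\| \sum_k \|A_k(X)\|\,\|B_k(X)\|$, and the infimum over all such representations equals $\|v\|\,\|(\partial_i P)(X)\|_\pi$ by definition. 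Substituting delivers \eqref{eq:Fisher-bound_reduced}. Overall I anticipate no substantial obstacle — both inequalities are clever but routine reductions to Proposition \ref{prop:Fisher-bound}.
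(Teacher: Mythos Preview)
Your argument is correct, and for \eqref{eq:Fisher-bound_reduced_trace} it coincides exactly with the paper's proof: both apply Proposition~\ref{prop:Fisher-bound} with $Q = 1 \otimes 1$.

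For \eqref{eq:Fisher-bound_reduced}, however, you take a genuinely different route. The paper applies Proposition~\ref{prop:Fisher-bound} with $Q = 1 \otimes Q_2$ to obtain the dual estimate
\[
|\langle (\Delta_{v,i}P)(X)u,\, w\rangle| \leq 4\|\xi_i\|_2\bigl(\|P(X)u\|_2\|v\| + \|u\|\|P(X)^\ast v\|_2\bigr)\|w\|
\]
first for $w \in \C\langle X_1,\dots,X_n\rangle$, then extends to all $w \in \M_0$ via Kaplansky's density theorem, and finally specializes $w = (\Delta_{v,i}P)(X)u$, bounding $\|w\|$ by $\|u\|\|v\|\|(\partial_iP)(X)\|_\pi$. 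Your bootstrap instead substitutes $\tilde u = uu^\ast G^\ast$ directly into the already-established trace inequality \eqref{eq:Fisher-bound_reduced_trace}. This sidesteps Kaplansky's density theorem entirely, at the cost of the extra H\"older-type estimates on $\|\tilde u\|$ and $\|P(X)\tilde u\|_2$; the paper's route, by contrast, produces the intermediate dual bound valid for arbitrary $w\in\M_0$, which is of some independent interest even if not used elsewhere. Both paths lead to the same constant in \eqref{eq:Fisher-bound_reduced}.
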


\begin{proof}
Take any noncommutative polynomial $w \in \C\langle X_1,\dots,X_n\rangle$. We apply Proposition \ref{prop:Fisher-bound} to $w_1 = 1$ and $w_2 = w$; we derive from \eqref{eq:Fisher-bound} that
$$|\langle (\tau \otimes \id)(v^\ast (\partial_i P)(X)) u, w\rangle| \leq 4 \|\xi_i\|_2 \bigl(\|P(X)u\|_2 \|v\| + \|u\| \|P(X)^\ast v\|_2\bigr) \|w\|.$$
Now, by Kaplansky's density theorem, as $\C\langle X_1,\dots,X_n\rangle$ is strongly dense in $\M_0$, the latter inequality extends to all $w\in\M_0$; thus, we may we apply it to
$$w := (\tau\otimes\id)\big(v^\ast (\partial_i P)(X)\big) u = (\Delta_{v,i} P)(X) u$$
for any fixed $i=1,\dots,n$; in this way, we obtain the inequality \eqref{eq:Fisher-bound_reduced}.

The second inequality \eqref{eq:Fisher-bound_reduced_trace} follows directly from the inequality \eqref{eq:Fisher-bound} given in Proposition \ref{prop:Fisher-bound} if the latter is applied to $w_1 = 1$ and $w_2 = 1$.
\end{proof}

In the case of Lipschitz conjugate variables, we can strengthen Corollary \ref{cor:Fisher-bound_reduced} as follows.

\begin{proposition}\label{prop:Lipschitz-bound_reduced}
Suppose that $X=(X_1,\dots,X_n)$ admits Lipschitz conjugate variables $(\xi_1,\dots,\xi_n)$. Then, for all (not necessarily selfadjoint) noncommutative polynomials $P\in\C\langle x_1,\dots,x_n \rangle$ and for all $u,v\in\M_0$, we have for $i=1,\dots,n$ that
\begin{equation}\label{eq:Lipschitz-bound_reduced}
\|(\Delta_{v,i} P)(X) u\|_2^2 \leq \bigl(\gamma_i(X) \|P(X)u\|_2 \|v\|_2 + \tilde{\gamma}_i(X) \|u\|_2 \|P(X)^\ast v\|_2\bigr) \|(\Delta_{v,i} P)(X) u\|,
\end{equation}
and
\begin{equation}\label{eq:Lipschitz-bound_reduced_trace}
|\tau((\Delta_{v,i} P)(X) u)| \leq \gamma_i(X) \|P(X)u\|_2 \|v\|_2 + \tilde{\gamma}_i(X) \|u\|_2 \|P(X)^\ast v\|_2.
\end{equation}
\end{proposition}

\begin{proof}
To begin with, we fix $i=1,\dots,n$ and we suppose that $u,v\in\C\langle X_1,\dots,X_n\rangle$. We recall from the proof of \cite[Proposition 3.7]{MSW17}, which is Proposition \ref{prop:Fisher-bound} given above, that in this case
$$\langle (\Delta_{v,i} P) u, w \rangle = \langle P(X) u, \partial_i^\ast(vw \otimes 1) \rangle - \langle (\id \otimes \tau)(\partial_i u), P(X)^\ast v w\rangle$$
for all $w\in\C\langle X_1,\dots,X_n\rangle$ and $i=1,\dots,n$. Using the bounds \eqref{eq:Dabrowski_bounds_Lipschitz}, we arrive at
\begin{align*}
|\langle (\Delta_{v,i} P) u, w \rangle|
&\leq \|P(X) u\|_2 \|\partial_i^\ast(vw \otimes 1)\|_2 + \|(\id \otimes \tau)(\partial_i u)\|_2 \|P(X)^\ast v\|_2 \|w\|\\
&\leq \big(\gamma_i(X) \|P(X) u\|_2 \|v\|_2 + \tilde{\gamma}_i(X) \|u\|_2 \|P(X)^\ast v\|_2\big) \|w\|.
\end{align*}
Now, we proceed as in the proof of Corollary \ref{cor:Fisher-bound_reduced}: we use the Kaplansky density theorem to verify that the latter inequality extends to hold for all $w\in\M_0$, so that we may apply it to $w = (\Delta_{v,i} P)(X) u$; this gives \eqref{eq:Lipschitz-bound_reduced}, whereas \eqref{eq:Lipschitz-bound_reduced_trace} is obtained for $w = 1$.
\end{proof}

\subsubsection{A Bernstein type inequality for noncommutative derivatives}

The proof of Theorem \ref{thm:Hoelder_continuity}, which will be given in Section \ref{subsec:Hoelder_continuity_proof}, relies on an iteration of the estimates provided in Corollary \ref{cor:Fisher-bound_reduced} and Proposition \ref{prop:Lipschitz-bound_reduced}. In doing so, it will be important to control the operator norm of expressions like $(\Delta_{q_k, i_k} \cdots \Delta_{q_1,i_1} P)(X)$ for projections $q_1,\dots,q_k\in\M_0$ and any noncommutative polynomial $P$.
We will achieve this in four steps; the crucial ingredient will be Proposition \ref{prop:Bernstein}, which can be seen an analogue of Bernstein's inequality for noncommutative polynomials.

\paragraph{Step 0}

On $\C\langle x_1,\dots,x_n\rangle$, we may define, for any fixed $R>0$, a norm $\|\cdot\|_R$ by putting
$$\|P\|_R := \sum^d_{k=0} \sum_{1\leq i_1,\dots,i_k\leq n} |a_{i_1,\dots,i_k}| R^k$$
for each $P\in\C\langle x_1,\dots,x_n\rangle$ that is written in the form \eqref{eq:ncpoly}. It is easily seen that
\begin{equation}\label{eq:evaluation_bound}
\|P(X)\| \leq \|P\|_R \qquad\text{for all $P\in\C\langle x_1,\dots,x_n\rangle$}
\end{equation}
holds, whenever the condition
\begin{equation}\label{eq:radius_constraint}
R \geq \max_{i=1,\dots,n} \|X_i\|
\end{equation}
is satisfied. Therefore, in order to control $\|P(X)\|$, it suffices to provide bounds for $\|P\|_R$ for any $R>0$ satisfying \eqref{eq:radius_constraint}.

\paragraph{Step 1}

Using the norm $\|\cdot\|_R$ for any given $R>0$, we may introduce on $\C\langle x_1,\dots,x_n\rangle \otimes \C\langle x_1,\dots,x_n\rangle$, the space of all noncommutative bi-polynomials, the associated projective norm $\|\cdot\|_{R,\pi}$ by
$$\|Q\|_{R,\pi} = \inf\bigg\{\sum^m_{k=1} \|Q_{1,k}\|_R \|Q_{2,k}\|_R \mathrel{\bigg|} Q = \sum^m_{k=1} Q_{1,k} \otimes Q_{2,k}\bigg\}.$$

\begin{lemma}\label{lem:projective_norm_bound_1}
Let $q\in\M_0$ be any projection and suppose that $R>0$ is chosen such that \eqref{eq:radius_constraint} holds. Then the following holds true:
\begin{enumerate}
 \item\label{it:projective_norm_bound_1-i} The functional $\phi_q$ is positive and satisfies for all $P\in\C\langle x_1,\dots,x_n\rangle$
 $$|\phi_q(P)| \leq \tau(q) \|P\|_R.$$
 \item\label{it:projective_norm_bound_1-ii} For each $Q\in\C\langle x_1,\dots,x_n\rangle \otimes \C\langle x_1,\dots,x_n\rangle$, we have that
 $$\|(\phi_q \otimes \id)(Q)\|_R \leq \tau(q) \|Q\|_{R,\pi}.$$
 \item\label{it:projective_norm_bound_1-iii} For every $P\in\C\langle x_1,\dots,x_n\rangle$ and $i=1,\dots,n$, we have that
 $$\|\Delta_{q,i} P\|_R \leq \tau(q) \|\partial_i P\|_{R,\pi}.$$
\end{enumerate}
\end{lemma}

\begin{proof}
(i) Let $P\in\C\langle x_1,\dots,x_n\rangle$ be any noncommutative polynomial. Suppose that $P$ is written in the form \eqref{eq:ncpoly}. Then
$$\phi_q(P) = \sum^d_{k=0} \sum_{1\leq i_1,\dots,i_k\leq n} a_{i_1,\dots,i_k} \tau(q X_{i_1} \cdots X_{i_k}).$$
By H\"older's inequality we see that $|\tau(q X_{i_1} \cdots X_{i_k})| \leq \|q\|_1 \|X_{i_1} \cdots X_{i_k}\| \leq \|q\|_1 R^k$, where $\|q\|_1 = \tau(q)$ as $q$ is a projection. Thus, in summary, we obtain as claimed that
$$|\phi_q(P)| \leq \tau(q) \bigg(\sum^d_{k=0} \sum_{1\leq i_1,\dots,i_k\leq n} |a_{i_1,\dots,i_k}| R^k\bigg) = \tau(q) \|P\|_R.$$

(ii) Take any noncommutative bi-polynomial $Q$. Then, by \ref{it:projective_norm_bound_1-i},
$$\|(\phi_q \otimes \id)(Q)\|_R \leq \sum^m_{k=1} |\phi_q(Q_{1,k})| \|Q_{2,k}\|_R \leq \tau(q) \sum^m_{k=1} \|Q_{1,k}\|_R \|Q_{2,k}\|_R,$$
and by passing to the infimum over all representations $Q = \sum^m_{k=1} Q_{1,k} \otimes Q_{2,k}$ of $Q$, we finally arrive at the assertion.

(iii) Since $\Delta_{q,i} P = (\phi_q \otimes \id)(\partial_i P)$, applying \ref{it:projective_norm_bound_1-ii} to $Q=\partial_i P$ yields directly the claim.
\end{proof}

The essence of Lemma \ref{lem:projective_norm_bound_1} is Item \ref{it:projective_norm_bound_1-iii}, which will allow us to derive bounds for $\|\Delta_{q,i} P\|_R$ from bounds for $\|\partial_i P\|_{R,\pi}$.

\paragraph{Step 2}

For the purpose of estimating $\|\partial_i P\|_{R,\pi}$ against $\|P\|_R$, we have to restrict attention to subspaces of $\C\langle x_1,\dots,x_n\rangle$ consisting of all noncommutative polynomials with degree below a given threshold; more precisely, for every $d\geq 0$, we work with the subspace of $\C\langle x_1,\dots,x_n\rangle$ that is given by
$$\P_d := \Big\{P\in \C\langle x_1,\dots,x_n\rangle \mathrel{\Big|} \deg(P) \leq d\Big\}.$$
On $\P_d$, we have the following estimate, which is a variant of a result that can be found in \cite[Section 4]{Voi98}.

\begin{lemma}\label{lem:projective_norm_bound_2}
Take any $R>0$ that satisfies \eqref{eq:radius_constraint}. Then, for each $P\in\P_d$ and $i=1,\dots,n$, it holds true that
$$\|\partial_i P\|_{R,\pi} \leq \frac{d}{R} \|P\|_R.$$
\end{lemma}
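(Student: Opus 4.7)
My plan is to prove the inequality by reducing the statement to monomials and performing a term-by-term estimate based on the Leibniz rule for $\partial_i$. The only structural input one really needs is that $\|\cdot\|_{R,\pi}$ is a projective tensor norm on $\C\langle x_1,\dots,x_n\rangle\otimes\C\langle x_1,\dots,x_n\rangle$ built from the norm $\|\cdot\|_R$ on $\C\langle x_1,\dots,x_n\rangle$; in particular, it is sub-additive and it evaluates any elementary tensor $A\otimes B$ as $\|A\|_R\|B\|_R$ by picking the single-term representation.

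First I would write $P$ in the monomial expansion \eqref{eq:ncpoly} and use linearity of $\partial_i$ together with the sub-additivity of $\|\cdot\|_{R,\pi}$ to reduce the estimate to the case of a single monomial $m = x_{i_1}\cdots x_{i_k}$ with $1 \leq k \leq d$. For such a monomial, the derivation property of $\partial_i$ combined with $\partial_i x_j = \delta_{ij}\, 1\otimes 1$ yields the familiar formula
$$\partial_i (x_{i_1} \cdots x_{i_k}) = \sum_{\substack{1\leq j\leq k\\ i_j = i}} x_{i_1}\cdots x_{i_{j-1}} \otimes x_{i_{j+1}}\cdots x_{i_k}.$$
Each summand on the right is an elementary tensor whose left and right factors are monomials of degree $j-1$ and $k-j$, so that their $\|\cdot\|_R$-norms are $R^{j-1}$ and $R^{k-j}$ respectively. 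Consequently every summand has projective norm at most $R^{k-1}$, and since there are at most $k$ indices $j$ with $i_j = i$, I obtain $\|\partial_i m\|_{R,\pi}\leq k R^{k-1}$.

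Weighting this by the coefficient $|a_{i_1,\dots,i_k}|$ and summing over $1\leq k\leq d$ and over all index tuples, I then conclude that
$$\|\partial_i P\|_{R,\pi} \leq \sum_{k=1}^{d} \sum_{1\leq i_1,\dots,i_k\leq n} |a_{i_1,\dots,i_k}|\, k R^{k-1} \leq \frac{d}{R} \sum_{k=0}^{d} \sum_{1\leq i_1,\dots,i_k\leq n} |a_{i_1,\dots,i_k}|\, R^{k} = \frac{d}{R}\|P\|_R,$$
where in the second step I used $k R^{k-1} \leq \frac{d}{R} R^{k}$ for $1\leq k\leq d$ and $R>0$, and simply enlarged the range of $k$ to include $k=0$ (which only adds a non-negative term). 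I do not anticipate any real obstacle: this is essentially a piece of combinatorial bookkeeping. The only point requiring a modicum of care is to recognize that the count of $j$'s with $i_j=i$ is bounded by the total length $k$ of the monomial, not by something smaller, and that $k$ in turn is controlled by $d$ since $P\in\P_d$.
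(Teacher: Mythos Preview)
Your proof is correct and follows essentially the same route as the paper: expand $P$ into monomials, apply the Leibniz formula for $\partial_i$ to each monomial, bound every elementary tensor $x_{i_1}\cdots x_{i_{j-1}}\otimes x_{i_{j+1}}\cdots x_{i_k}$ by $R^{k-1}$ via the projective norm, count at most $k\leq d$ summands, and sum. The only cosmetic difference is that you phrase the reduction to monomials more explicitly and correctly write $\leq$ in the final step where the paper (somewhat sloppily) writes $=$.
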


\begin{proof}
Take any $P\in\P_d$ that is written in the form \eqref{eq:ncpoly}. Then, for $i=1,\dots,n$, we have by definition of the noncommutative derivatives that
$$\partial_i P = \sum^d_{k=1} \sum_{1\leq i_1,\dots,i_k\leq n} \sum^k_{j=1} \delta_{i,i_j} a_{i_1,\dots,i_k} x_{i_1} \cdots x_{i_{j-1}} \otimes x_{i_{j+1}} \cdots x_{i_k}.$$
Therefore, we may conclude that
\begin{align*}
\|\partial_i P\|_{R,\pi}
&\leq \sum^d_{k=1} \sum_{1\leq i_1,\dots,i_k\leq n} \sum^k_{j=1} \delta_{i,i_j} |a_{i_1,\dots,i_k}| \|x_{i_1} \cdots x_{i_{j-1}}\|_R \|x_{i_{j+1}} \cdots x_{i_k}\|_R\\
&\leq \sum^d_{k=1} \sum_{1\leq i_1,\dots,i_k\leq n} k |a_{i_1,\dots,i_k}| R^{k-1}= \frac{d}{R} \|P\|_R,
\end{align*}
which is the asserted inequality.
\end{proof}

\paragraph{Step 3}

By combining Lemma \ref{lem:projective_norm_bound_2} with Item \ref{it:projective_norm_bound_1-iii} of Lemma \ref{lem:projective_norm_bound_1}, we see that $\|\Delta_{q,i} P\|_R \leq \tau(q) \|\partial_i P\|_{R,\pi}$ and $\|\partial_i P\|_{R,\pi} \leq \frac{d}{R} \|P\|_R$ hold under the assumption \eqref{eq:radius_constraint} for every $P\in \P_d$, each projection $q\in\M_0$ and for $i=1,\dots,n$. Putting this together yields that
\begin{equation}\label{eq:norm_bound}
\|\Delta_{q,i} P\|_R \leq d \frac{\tau(q)}{R} \|P\|_R.
\end{equation}
This enables us to control expressions like $\Delta_{q_k, i_k} \Delta_{q_{k-1},i_{k-1}} \cdots \Delta_{q_1,i_1} P$ by iterating the latter estimate \eqref{eq:norm_bound}; this is the content of the next proposition.

\begin{proposition}\label{prop:Bernstein}
For a $k\in\N$, let $q_1,\dots,q_{k-1}\in\M_0$ be arbitrary projections and let $1\leq i_1,\dots,i_k \leq n$ be any collection of indices. Moreover, let $R>0$ be such that \eqref{eq:radius_constraint} is satisfied. Then
$$\|\Delta_{q_k, i_k} \cdots \Delta_{q_1,i_1} P\|_R \leq \frac{d!}{(d-k)!} \frac{\tau(q_1) \cdots   \tau(q_k)}{R^k} \|P\|_R$$
holds for every noncommutative polynomial $P\in\P_d$.
\end{proposition}

\begin{proof}
We proceed by mathematical induction on $k$. In the case $k=1$, the asserted estimate is nothing but \eqref{eq:norm_bound}. The induction then follows by noting that $\Delta_{q_1,i_1} P \in \P_{d-1}$ and using the bound in \eqref{eq:norm_bound}.
\end{proof}

By combining Proposition \ref{prop:Bernstein} with \eqref{eq:evaluation_bound}, we obtain immediately the following corollary, which is the desired bound that we will use in the proof of Theorem \ref{thm:Hoelder_continuity}.

\begin{corollary}\label{cor:norm_bound}
In the situation of Proposition \ref{prop:Bernstein}, we have furthermore that
$$\|(\Delta_{q_k, i_k} \Delta_{q_{k-1},i_{k-1}} \cdots \Delta_{q_1,i_1} P)(X)\| \leq \frac{d!}{(d-k)!} \frac{\tau(q_1) \cdots \tau(q_{k-1}) \tau(q_k)}{R^k} \|P\|_R.$$
\end{corollary}

\subsection{Proof of Theorem \ref{thm:Hoelder_continuity}}\label{subsec:Hoelder_continuity_proof}

Let us fix any selfadjoint noncommutative polynomial $P\in \C\langle x_1,\dots,x_n\rangle$ that has degree $d := \deg(P) \geq 1$. Accordingly, $P$ belongs to the space $\P_d$; we suppose that $P$ is written in the form \eqref{eq:ncpoly}. Let us fix some leading coefficient $a_{i_1,\dots,i_d}$ of $P$ that is non-zero. Further, we choose $R>0$ such that $R \geq \max_{i=1,\dots,n} \|X_i\|$, i.e., \eqref{eq:radius_constraint} holds.

We will prove the result for $\Phi^\ast(X) < \infty$ together with its strengthening in the case of Lipschitz conjugate variables. In either instance, the H\"older continuity of $\mu_Y$ for the noncommutative random variable $Y=P(X)$ will follow from Lemma \ref{lem:Hoelder_criterion}; for that purpose, we are going to prove that there are $\alpha>1$ and $c>0$ such that $Y$ satisfies
\begin{equation}\label{eq:Hoelder_continuity_proof}
c \|(Y-s) p\|_2 \geq \|p\|_2^\alpha
\end{equation}
for every $s\in\R$ and every projection $p\in\M_0$; note that it clearly suffices to consider the case $p\neq 0$.

Correspondingly, let us take now any $s\in\R$ and any non-zero projection $p\in\M_0$. Put $P_0 := P-s$; note that $\deg(P_0) = \deg(P) \geq 1$. We construct then recursively, for every $1\leq k\leq d$, a non-zero projection $q_k\in\M_0$ and a noncommutative polynomial $P_k\in\C\langle x_1,\dots,x_n\rangle$ which is non-constant for $1 \leq k < d$ according to the following rules:
\begin{enumerate}
 \item With the help of Lemma \ref{lem:comparison}, applied to the non-constant polynomial $P_{k-1}$, we construct the projection $q_k\in\M_0$ so that $$\tau(q_k) = \tau(p) \qquad\text{and}\qquad \|P_{k-1}(X)^\ast q_k\|_2 = \|P_{k-1}(X) p\|_2.$$

 \item Subsequently, we put $P_k := \Delta_{q_k,i_k} P_{k-1} = \Delta_{q_k,i_k} \dots \Delta_{q_1,i_1} P$, which is for $1\leq k < d$ again a non-constant polynomial.
 
 In fact, we have $\deg(P_k) = d-k$, since necessarily $\deg(P_k) \leq d-k$, due to the iterative application of noncommutative derivatives, and since the monomial $x_{i_{k+1}} \dots x_{i_d}$ of degree $d-k$ shows up in $P_k$ with the non-zero coefficient $\tau(p)^k a_{i_1,\dots,i_d}$.
\end{enumerate}

Involving now the inequality \eqref{eq:Fisher-bound_reduced} provided in Corollary \ref{cor:Fisher-bound_reduced}, we infer that for $k=1,\dots,d$
$$\|P_k(X) p\|_2^2 \leq 8 \|\xi_{i_k}\|_2 \|P_k(X) p\| \|P_{k-1}(X) p\|_2,$$
whereas in the case of Lipschitz conjugate variables
$$\|P_k(X) p\|_2^2 \leq (\gamma_{i_k}(X) + \tilde{\gamma}_{i_k}(X)) \|P_k(X) p\| \|p\|_2 \|P_{k-1}(X) p\|_2$$
holds, as one sees by using instead the inequality \eqref{eq:Lipschitz-bound_reduced} provided in Proposition \ref{prop:Lipschitz-bound_reduced}.

We estimate $\|\xi_{i_k}\|_2 \leq \Phi^\ast(X)^{1/2}$ and $\gamma_{i_k}(X) + \tilde{\gamma}_{i_k}(X) \leq \Gamma^\ast(X)$; moreover, by using Corollary \ref{cor:norm_bound}, respectively, we get that
$$\|P_k(X) p\| \leq \|(\Delta_{q_k,i_k} \dots \Delta_{q_1,i_1} P)(X)\| \leq \frac{d!}{(d-k)!} \frac{\tau(q_1) \cdots \tau(q_k)}{R^k} \|P\|_R.$$
Because $\tau(p) = \|p\|_2^2$, this yields in summary for every $k=1,\dots,d-1$
$$\|P_k(X) p\|_2^2 \leq c_k  \|P_{k-1}(X) p\|_2 \quad\text{with}\quad c_k := 8 \Phi^\ast(X)^{1/2} \frac{d!}{(d-k)!} \frac{\|p\|_2^{2k}}{R^k} \|P\|_R$$
and
$$\|P_k(X) p\|_2^2 \leq \tilde{c}_k  \|P_{k-1}(X) p\|_2 \quad\text{with}\quad \tilde{c}_k := \Gamma^\ast(X) \frac{d!}{(d-k)!} \frac{\|p\|_2^{2k+1}}{R^k} \|P\|_R,$$
respectively.
Further, by the second inequality \eqref{eq:Fisher-bound_reduced_trace} of Corollary \ref{cor:Fisher-bound_reduced}
$$|\tau(P_d(X) p)| \leq  c_d \|P_{d-1}(X) p\|_2 \qquad\text{with}\qquad c_d := 8 \Phi^\ast(X)^{1/2},$$
and respectively, by the second inequality \eqref{eq:Lipschitz-bound_reduced_trace} of Proposition \ref{prop:Lipschitz-bound_reduced},
$$|\tau(P_d(X) p)| \leq  \tilde{c}_d \|P_{d-1}(X) p\|_2 \qquad\text{with}\qquad \tilde{c}_d := \Gamma^\ast(X) \|p\|_2.$$

By iterating the latter inequalities, we obtain that
\begin{align*}
|\tau(P_d(X) p)|^{2^{d-1}} &\leq \bigg(\prod^{d}_{k=1} c_k^{2^{k-1}}\bigg) \|P_0(X) p\|_2 \qquad\text{and}\\
|\tau(P_d(X) p)|^{2^{d-1}} &\leq \bigg(\prod^{d}_{k=1} \tilde{c}_k^{2^{k-1}}\bigg) \|P_0(X) p\|_2,
\end{align*}
respectively. With the help of the formulas
$$\sum^{d-1}_{k=1} k2^{k-1} = (d-2)2^{d-1}+1 \quad\text{and}\quad \sum^{d-1}_{k=1} (2k+1)2^{k-1} = (2d-3)2^{d-1}+1,$$
the involved products can be simplified to
$$\prod^{d}_{k=1} c_k^{2^{k-1}} = \big(8 \Phi^\ast(X)^{1/2}\big)^{2^d-1} \|P\|_R^{2^{d-1}-1} \frac{\|p\|_2^{(d-2)2^d+2}}{R^{(d-2)2^{d-1}+1}}\prod^{d-1}_{k=1} \Big(\frac{d!}{(d-k)!}\Big)^{2^{k-1}}$$
and
$$\prod^{d}_{k=1} \tilde{c}_k^{2^{k-1}} =  \Gamma^\ast(X)^{2^d-1} \|P\|_R^{2^{d-1}-1} \frac{\|p\|_2^{(d-1)2^d+1}}{R^{(d-2)2^{d-1}+1}}\prod^{d-1}_{k=1} \Big(\frac{d!}{(d-k)!}\Big)^{2^{k-1}},$$
respectively. Note that $P_0 = P-s$ and $P_d = \Delta_{q_d,i_d} \dots \Delta_{q_1,i_1} P = \tau(p)^d a_{i_1,\dots,i_d}$ as $P$ has degree $d$; thus
$$\|P_0(X) p\|_2 = \|(Y-s)p\|_2 \quad\text{and}\quad |\tau(P_d(X) p)|^{2^{d-1}} = |a_{i_1,\dots,i_d}|^{2^{d-1}} \|p\|_2^{(d+1)2^d}.$$ 
We conclude now that \eqref{eq:Hoelder_continuity_proof} holds with $\alpha = (d+1)2^d - (d-2)2^d - 2 = 3\cdot 2^d - 2$ and
\begin{equation}\label{eq:Hoelder_constant-1a}
c = \big(8 \Phi^\ast(X)^{1/2}\big)^{2^d-1} \frac{\|P\|_R^{2^{d-1}-1}}{|a_{i_1,\dots,i_d}|^{2^{d-1}} R^{(d-2)2^{d-1}+1}} \prod^{d-1}_{k=1} \Big(\frac{d!}{(d-k)!}\Big)^{2^{k-1}},
\end{equation}
whereas \eqref{eq:Hoelder_continuity_proof} holds with $\tilde{\alpha} = (d+1)2^d - (d-1)2^d - 1 = 2^{d+1} - 1$ and
\begin{equation}\label{eq:Hoelder_constant-1b}
\tilde{c} = \Gamma^\ast(X) ^{2^d-1} \frac{\|P\|_R^{2^{d-1}-1}}{|a_{i_1,\dots,i_d}|^{2^{d-1}} R^{(d-2)2^{d-1}+1}} \prod^{d-1}_{k=1} \Big(\frac{d!}{(d-k)!}\Big)^{2^{k-1}}
\end{equation}
in the case of Lipschitz conjugate variables.

Now, using Lemma \ref{lem:Hoelder_criterion}, we see that $\F_Y$ is H\"older continuous with exponent $\beta = \frac{2}{\alpha-1} = \frac{2}{3(2^d-1)}$ and $\tilde{\beta} = \frac{2}{\tilde{\alpha}-1} = \frac{1}{2^d-1}$, respectively; the associated H\"older constants are given by $C = c^\beta$ and $\tilde{C} = \tilde{c}^{\tilde{\beta}}$. This concludes the proof of Theorem \ref{thm:Hoelder_continuity}.

\subsection{More about the H\"older constant}

We take now a closer look at the constants $c$ and $\tilde{c}$ given in \eqref{eq:Hoelder_constant-1a} and \eqref{eq:Hoelder_constant-1b}, respectively. Besides $\|P\|_R$, we can extract from there another quantity that depends solely on $R$ and the algebraic structure of $P$. 
More precisely, for any given $R>0$, we define for every noncommutative polynomial $0\neq P\in \C\langle x_1,\dots,x_n\rangle$ its \emph{leading weight} $\rho_R(P) \in (0,1]$ by
$$\rho_R(P) := \max_{1\leq i_1,\dots,i_d \leq n} \frac{|a_{i_1,\dots,i_d}| R^d}{\|P\|_R}, \qquad\text{where $d:=\deg(P)$}.$$

Using this quantity, we can rearrange the terms appearing in \eqref{eq:Hoelder_constant-1a} as
$$c = \rho_R(P)^{-2^{d-1}} \big(8 R \Phi^\ast(X)^{1/2}\big)^{2^d-1} \frac{1}{\|P\|_R} \prod^{d-1}_{k=1} \Big(\frac{d!}{(d-k)!}\Big)^{2^{k-1}}.$$
Since the explicit value for the H\"older constant $C>0$ of $\F_Y$ that we found in the proof of Theorem \ref{thm:Hoelder_continuity} is $C = c^\beta$ with $\beta = \frac{2}{3(2^d-1)}$, we infer from the latter that 
\begin{equation}\label{eq:Hoelder_constant-2a}
C = C_d^\frac{2}{3} \rho_R(P)^{-\frac{2^d}{3(2^d-1)}} \big(8 R \Phi^\ast(X)^{1/2}\big)^{\frac{2}{3}} \|P\|_R^{-\frac{2}{3(2^d-1)}},
\end{equation}
where $C_d$ is the numerical quantity depending only on $d$ which is given by
\begin{equation}\label{eq:constant}
C_d := \bigg(\prod^{d-1}_{k=1} \Big(\frac{d!}{(d-k)!}\Big)^{2^{k-1}}\bigg)^{\frac{1}{2^d-1}}.
\end{equation}

Likewise, we may rearrange the terms in \eqref{eq:Hoelder_constant-1b} as
$$\tilde{c} = \rho_R(P)^{-2^{d-1}} \big(R \Gamma^\ast(X)\big)^{2^d-1} \frac{1}{\|P\|_R} \prod^{d-1}_{k=1} \Big(\frac{d!}{(d-k)!}\Big)^{2^{k-1}},$$
and since the proof of Theorem \ref{thm:Hoelder_continuity} gives $\tilde{C} = \tilde{c}^{\tilde{\beta}}$ with $\tilde{\beta} = \frac{1}{2^d-1}$, we obtain that
\begin{equation}\label{eq:Hoelder_constant-2b}
\tilde{C} = C_d \rho_R(P)^{-\frac{2^{d-1}}{2^d-1}} \big(R \Gamma^\ast(X)\big) \|P\|_R^{-\frac{1}{2^d-1}},
\end{equation}
where $C_d$ is the numerical quantity defined in \eqref{eq:constant}.

It is natural to ask for the order by which $C_d$ grows with $d$; this question is addressed in the next lemma.

\begin{lemma}\label{lem:constant_bound}
For every $d\in\N$, the constant $C_d$ from \eqref{eq:constant} satisfies $(d!)^{1/2} \leq C_d \leq d!$. 
\end{lemma}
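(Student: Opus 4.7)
The plan is to reduce the two-sided bound on $C_d$ to elementary estimates on $\log A_d$, where $A_d := \prod_{k=1}^{d-1} (d!/(d-k)!)^{2^{k-1}}$, so that $C_d = A_d^{2/(2^{d+2}-5)}$. Taking logarithms, the asserted inequalities $(d!)^{1/8} \leq C_d \leq (d!)^{1/4}$ are equivalent, after clearing the denominator $2^{d+2}-5$, to
\begin{equation*}
\frac{2^{d+2}-5}{16} \log d! \;\leq\; \log A_d \;\leq\; \frac{2^{d+2}-5}{8} \log d!.
\end{equation*}
Expanding, $\log A_d = \sum_{k=1}^{d-1} 2^{k-1} \log\!\big(d!/(d-k)!\big)$, and the whole game is to sandwich this between two multiples of $\log d!$ whose coefficients are both essentially $2^{d-1}$; the geometric weights $2^{k-1}$ sum to $2^{d-1}-1$, which is suggestively close to $(2^{d+2}-5)/8 = 2^{d-1}-5/8$.

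For the upper bound I will apply the trivial uniform estimate $d!/(d-k)! \leq d!$ (valid since $(d-k)! \geq 1$ for $0 \leq k \leq d-1$) to each summand; summing the geometric weights then gives $\log A_d \leq (2^{d-1}-1)\log d!$, and the required conclusion reduces to the numerical inequality $8(2^{d-1}-1) \leq 2^{d+2}-5$, which is the trivial $-8 \leq -5$. For the lower bound I will discard all but the single term corresponding to $k=d-1$, which alone contributes $2^{d-2}\log d!$ because $d!/1!=d!$; the remaining terms are non-negative, since $d!/(d-k)! \geq 1$ whenever $1 \leq k \leq d-1$. This yields $\log A_d \geq 2^{d-2}\log d!$, and the required inequality $16 \cdot 2^{d-2} \geq 2^{d+2}-5$ collapses to $0 \geq -5$.

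Finally, I will check the degenerate case $d=1$ by hand, since then the product defining $A_d$ is empty, forcing $C_1 = 1$, which agrees with both $(1!)^{1/8}$ and $(1!)^{1/4}$. No serious obstacle appears anywhere in this argument; the only thing to notice is that the geometric weights $2^{k-1}$ absorb the factorial ratios almost perfectly, so both the crudest upper bound (replacing every ratio by $d!$) and the crudest lower bound (keeping only the maximal term) land within a small additive slack of the target exponent.
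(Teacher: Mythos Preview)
Your proof is correct and essentially the same as the paper's: both take logarithms, arrive at the identical intermediate bounds $\log C_d \leq \frac{2(2^{d-1}-1)}{2^{d+2}-5}\log d!$ and $\log C_d \geq \frac{2^{d-1}}{2^{d+2}-5}\log d!$, and reduce the conclusion to the numerical checks $-8 \leq -5$ and $0 \geq -5$. The only cosmetic difference is that the paper first rewrites the sum in closed form as $\frac{2^d}{2^{d+2}-5}\big(\log d! - \sum_{l=1}^{d-1}2^{-l}\log(l+1)\big)$ before bounding, whereas you bound each factor in the product directly; your route is arguably the more transparent of the two.
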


\begin{proof}
Since $C_1=1$, the assertion is trivially true in the case $d=1$. Thus, assume from now on that $d\geq 2$. It is straightforward then to check that
$$\log(C_d) = \frac{1}{2^d-1} \sum^{d-1}_{k=1} \sum_{l=d-k}^{d-1} 2^k \log(l+1) = \frac{2^d}{2^d-1} \Bigg(\log(d!) - \sum^{d-1}_{l=1} 2^{-l} \log(l+1)\Bigg).$$
From the latter, we easily deduce that
\begin{align*}
\log(C_d) &\leq \frac{2(2^{d-1}-1)}{2^d-1} \log(d!) \leq \log(d!) \qquad\text{and}\\
\log(C_d) &\geq \frac{2^{d-1}}{2^d-1} \log(d!) \geq \frac{1}{2} \log(d!),
\end{align*}
which proves the assertion.
\end{proof}

\section{H\"older continuity and finite free entropy}\label{sec:finite_entropy}

This section is devoted to the proof of Theorem \ref{thm:finite_entropy}. In fact, we will prove the following theorem, which provides an explicit upper bound for the logarithmic energy (as defined in \eqref{eq:log_energy}) of the analytic distribution of the considered polynomial evaluation. Thanks to \eqref{eq:entropy-log_energy}, the latter results directly in a lower bound for both the microstates and the non-microstates free entropy; this, in particular, verifies the assertion of Theorem \ref{thm:finite_entropy}.

\begin{theorem}\label{thm:finite_entropy_bound}
Let $(\M,\tau)$ be a tracial $W^\ast$-probability space and suppose that $X_1,\dots,X_n$ are selfadjoint noncommutative random variables in $\M$ satisfying $\Phi^\ast(X_1,\dots,X_n)<\infty$. Further, let $P\in\C\langle x_1,\dots,x_n\rangle$ be any selfadjoint noncommutative polynomial of degree $d\geq 1$ and consider the associated selfadjoint noncommutative random variable $Y:=P(X_1,\dots,X_n)$ in $\M$. Then the analytic distribution $\mu_Y$ of $Y$ has finite logarithmic energy $I(\mu_Y)$ that can be bounded from above by
\begin{equation}\label{eq:log_energy_bound}
I(\mu_Y) \leq 3(2^d-1) C_d^\frac{2}{3} \rho_R(P)^{-\frac{2^d}{3(2^d-1)}} \big(8 R \Phi^\ast(X)^{1/2}\big)^{\frac{2}{3}} \|P\|_R^{-\frac{2}{3(2^d-1)}},
\end{equation}
where $C_d$ is the constant introduced in \eqref{eq:constant}. 
\end{theorem}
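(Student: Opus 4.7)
The plan is to deduce Theorem~\ref{thm:finite_entropy_bound} as an immediate consequence of Theorem~\ref{thm:Hoelder_continuity} combined with a quantitative version of the result of Jaming from \cite{Jam15}. Since Theorem~\ref{thm:Hoelder_continuity} already supplies an \emph{explicit} H\"older constant, namely the one recorded in \eqref{eq:Hoelder_constant-2}, the only remaining task is to convert a H\"older bound on the CDF into an effective upper bound on the logarithmic energy $I(\mu_Y)$.

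First, I apply Theorem~\ref{thm:Hoelder_continuity} to $Y := P(X_1,\dots,X_n)$: the CDF $\F_Y$ is H\"older continuous with exponent $\beta = \tfrac{2}{2^{d+2}-5}$ and with the explicit H\"older constant $C$ given by \eqref{eq:Hoelder_constant-2}. Next, I invoke the quantitative form of Jaming's theorem, which estimates the logarithmic energy of a compactly supported Borel probability measure on $\R$ in terms of the H\"older data of its CDF; the version I need has the shape $I(\mu) \leq \tfrac{2C}{\beta}$, possibly after absorbing universal factors and a term controlled by $\operatorname{diam}(\supp\mu)$ into $C$. Here $\supp(\mu_Y) \subseteq [-\|Y\|,\|Y\|]$ with $\|Y\| \leq \|P\|_R$ (using $R \geq \max_i \|X_i\|$), so this diameter is harmlessly bounded. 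Since $\tfrac{2}{\beta} = 2^{d+2}-5$, substituting the H\"older constant from \eqref{eq:Hoelder_constant-2} immediately reproduces \eqref{eq:log_energy_bound}.

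If one prefers to avoid appealing to \cite{Jam15} as a black box, the same estimate can be derived from scratch. The key inputs are the elementary inequality $\log(1/x) \leq x^{-\beta}/\beta$ for $x \in (0,1]$ (which follows from $\log y \leq y^\beta/\beta$ for $y \geq 1$) and the marginal estimate $\mu_Y([s-r,s+r]) \leq \min(1, 2^\beta C r^\beta)$ coming from H\"older continuity. A layer-cake computation, together with the observation that the contribution from $|s-t| \geq 1$ to the double integral defining $I(\mu_Y)$ is nonpositive and may be discarded in the upper bound, then produces a bound of the desired order $C/\beta$ with numerical prefactor matching $(2^{d+2}-5)$.

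\textbf{Main obstacle.} The only genuinely delicate step is the bookkeeping of numerical constants, so that the prefactor in \eqref{eq:log_energy_bound} comes out as precisely $(2^{d+2}-5)$ rather than a universal multiple thereof. All substantive content has already been established in Theorem~\ref{thm:Hoelder_continuity}; converting H\"older continuity into a bound on logarithmic energy is, by comparison, routine. Once the bound \eqref{eq:log_energy_bound} is in hand, Theorem~\ref{thm:finite_entropy} follows immediately via the identity \eqref{eq:entropy-log_energy} relating $\chi(Y)$ to $-I(\mu_Y)$ and the coincidence $\chi^\ast(Y) = \chi(Y)$ for a single selfadjoint variable.
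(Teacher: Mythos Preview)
Your proposal is correct and follows essentially the same route as the paper: apply Theorem~\ref{thm:Hoelder_continuity} to obtain the explicit H\"older constant $C$ from \eqref{eq:Hoelder_constant-2} with exponent $\beta = \tfrac{2}{2^{d+2}-5}$, then feed this into Jaming's bound $I(\mu) \leq 2C/\beta$ from \cite{Jam15} (stated in the paper as \eqref{eq:Jam_bound}) to get exactly \eqref{eq:log_energy_bound}. Your hedging about ``absorbing universal factors'' is unnecessary---the paper uses \eqref{eq:Jam_bound} verbatim, and the arithmetic $\tfrac{2C}{\beta} = C(2^{d+2}-5)$ yields the prefactor on the nose.
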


Using \cite{Jam15}, Theorem \ref{thm:finite_entropy_bound} follows rather immediately from Theorem \ref{thm:Hoelder_continuity}. To be more precise, it was shown in \cite{Jam15} that for every Borel probability measure $\mu$ on $\R$ that has a cumulative distribution function $\F_\mu$ which is H\"older continuous with exponent $\beta\in(0,1]$ and with a H\"older constant $C>0$, i.e., if $\F_\mu$ satisfies \eqref{eq:Hoelder_condition}, then the logarithmic energy of $\mu$ can be bounded from above by
\begin{equation}\label{eq:Jam_bound}
I(\mu) \leq 2\frac{C}{\beta}.
\end{equation}

\begin{proof}[Proof of Theorem \ref{thm:finite_entropy_bound}]
Using Theorem \ref{thm:Hoelder_continuity}, we see that $\F_Y$ satisfies \eqref{eq:Hoelder_condition} with the constant $C$ given by \eqref{eq:Hoelder_constant-2a} and $\beta = \frac{2}{3(2^d - 1)}$. Thus, the asserted bound \eqref{eq:log_energy_bound} follows from \eqref{eq:Jam_bound}.
\end{proof}

\section{Convergence in distribution and the Kolmogorov distance}\label{sec:Kolmogorov}

Among the strongest metrics that are usually studied on the space of all Borel probability measures on the real line $\R$ is the so-called \emph{Kolmogorov distance}; this metric $\Delta$ is defined for any two Borel probability measures $\mu$ and $\nu$ on $\R$ by
$$\Delta(\mu,\nu) := \sup_{t\in\R} |\F_\mu(t) - \F_\nu(t)|.$$
Though its definition is quite appealing, convergence with respect to the Kolmogorov distance is much more rigid than, for instance, convergence with respect to the so-called \emph{L\'evy distance}. The latter is defined by
$$L(\mu,\nu) := \inf\{\epsilon>0 \mid \forall t\in\R:\ \F_\mu(t-\epsilon) - \epsilon \leq \F_\nu(t) \leq \F_\mu(t+\epsilon) + \epsilon\}$$
and is known to provide a metrization of convergence in distribution.

It is accordingly a challenging task to control the Kolmogorov distance in concrete situations. In view of our regularity results, some known ``self-improvement'' phenomenon is worth mentioning: if convergence towards a measure with H\"older continuous cumulative distribution function is considered, then convergence in distribution automatically implies convergence in Kolmogorov distance; see Theorem \ref{thm:convergence_in_distribution} below.

The drawback of this approach, however, is that it does not give rates of convergence for the Kolmogorov distance if the convergence is measured only in terms of the associated Cauchy-Stieltjes transforms. Based on estimates derived in \cite{Bai93a,Bai93b} (see also \cite{BS10}), we provide here with Theorem \ref{thm:Hoelder_criterion} a criterion that gives explicitly such rates in general situations.

\subsection{Convergence in Kolmogorov distance}

Let us denote by $\C^\pm$ the complex upper respectively lower half-plane, i.e., $\C^\pm := \{z\in\C \mid \pm\Im(z)>0\}$. To each Borel probability measure $\mu$ on the real line $\R$, we may associate its \emph{Cauchy transform}, i.e., the holomorphic function $G_\mu: \C^+ \to \C^-$ that is given by
$$G_\mu(z) := \int_\R \frac{1}{z-t}\, d\mu(t) \qquad\text{for all $z\in\C^+$}.$$

Let us first recall the following well-known facts that are well surveyed in \cite{GH03}.

\begin{theorem}\label{thm:convergence_in_distribution}
Let $(\mu_n)_{n=1}^\infty$ be a sequence of Borel probability measures on $\R$ and let $\nu$ be another Borel probability measure on $\R$. Then the following statements are equivalent:
\begin{enumerate}
 \item\label{it:convergence_in_distribution-i} $(\mu_n)_{n=1}^\infty$ converges in distribution to $\nu$.
 \item\label{it:convergence_in_distribution-ii} We have that $(G_{\mu_n})_{n=1}^\infty$ converges uniformly on compact subsets of $\C^+$ to $G_\nu$.
 \item\label{it:convergence_in_distribution-iii} There is an infinite subset $K \subseteq \C^+$ with an accumulation point in the complex upper half-plane $\C^+$ such that $G_{\mu_n}(z) \to G_\nu(z)$ as $n\to\infty$ for each $z\in K$. 
\end{enumerate}
If we assume in addition that the target measure $\nu$ has a cumulative distribution function $\F_\nu$ that is H\"older continuous with some exponent $\beta\in(0,1]$, then the above statements \ref{it:convergence_in_distribution-i}, \ref{it:convergence_in_distribution-ii}, and \ref{it:convergence_in_distribution-iii} are equivalent also to
\begin{enumerate}\addtocounter{enumi}{3}
 \item\label{it:convergence_in_distribution-iv} We have $\Delta(\mu_n,\nu) \to 0$ as $n\to\infty$.
\end{enumerate}
\end{theorem}

If we require the target measure $\nu$ to have a cumulative distribution function $\F_\nu$ that is H\"older continuous with exponent $\beta\in(0,1]$ and a H\"older constant $C>0$, then \cite[Lemma 12.18]{BS10} says that
$$L(\mu_n,\nu) \leq \Delta(\mu_n,\nu) \leq (C+1) L(\mu_n,\nu)^\beta,$$
from which the equivalence of \ref{it:convergence_in_distribution-i} and \ref{it:convergence_in_distribution-iv} follows, since one has $L(\mu_n,\nu) \to 0$ as $n\to\infty$ if and only if \ref{it:convergence_in_distribution-i} holds.

Here, we will prove the following quantitative version of Theorem \ref{thm:convergence_in_distribution}. We will denote by $\bS_\rho$ for any $0<\rho\leq \infty$ the strip $\{z\in\C \mid 0 < \Im(z) < \rho\}$ in $\C^+$; clearly, $\bS_\infty = \C^+$.

\begin{theorem}\label{thm:Hoelder_criterion}
Let $(\mu_n)_{n=1}^\infty$ be a sequence of Borel probability measures on $\R$ and let $\nu$ be any other Borel probability measure on $\R$. Suppose the following:
\begin{enumerate}
 \item\label{it:cond-1} The cumulative distribution function $\F_\nu$ of the measure $\nu$ is H\"older continuous with exponent $\beta\in(0,1]$ and a H\"older constant $C>0$.
 \item\label{it:cond-2} There are continuous functions $\Theta: \bS_\rho \to [0,\infty)$ for some $0<\rho \leq \infty$ and $\Theta_0: [0,\infty) \to [0,\infty)$ that satisfy the growth conditions $$\limsup_{R\to\infty} R^{-l} \max_{r\in[0,R]} \Theta_0(r) < \infty$$ for some $l\geq 0$ and $$\Theta(z) \leq \frac{\Theta_0(|z|)}{\Im(z)^k} \qquad\text{for all $z\in\bS_\rho$}$$ for some $k\geq 0$, and a sequence $(\epsilon_n)_{n=1}^\infty$ in $(0,\infty)$ converging to $0$ such that the estimate $$|G_{\mu_n}(z) - G_\nu(z)| \leq \Theta(z) \epsilon_n$$ holds for every $n\in\N$ and all $z\in \bS_\rho$.
 \item\label{it:cond-3} We have that $\sup_{n\in\N} \int_\R t^2\, d\mu_n(t) < \infty$.
\end{enumerate}
Then, $(\mu_n)_{n=1}^\infty$ converges in Kolmogorov distance to $\nu$; in fact, there is $D>0$, such that
$$\Delta(\mu_n,\nu) \leq D \epsilon_n^{\frac{\beta}{2+k+(2-\beta)l}} \qquad\text{for all $n\in\N$}.$$
\end{theorem}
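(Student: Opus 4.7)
My plan is to combine a Bai-type smoothing inequality (\cite{Bai93a, Bai93b, BS10}) with the three hypotheses of the theorem, and then optimize the auxiliary parameters as powers of $\epsilon_n$. Recall that Bai's inequality bounds $\Delta(\mu,\nu)$ for probability measures $\mu,\nu$ on $\R$ (whose target $\nu$ has H\"older continuous cumulative distribution function) by a sum of three contributions parametrized by a spectral height $v>0$ and a bulk cutoff $A>0$: (a) a line integral of $|G_\mu - G_\nu|$ along the line $\{\Im z = v\}$ over $[-A,A]$, (b) the modulus of continuity of $F_\nu$ at scale $v$, and (c) a tail correction involving $|F_\mu - F_\nu|$ outside a (possibly larger) interval.

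For (a), I would use hypothesis (\ref{it:cond-2}): provided $v \leq \rho$, the integrand is at most $\Theta_0(|u+iv|)\, v^{-k}\, \epsilon_n$, and the polynomial growth $\Theta_0(r) = O(r^l)$ then yields $(\text{a}) \lesssim A^{l+1} v^{-k} \epsilon_n$. Term (b) is of order $v^\beta$ by \eqref{eq:Hoelder_criterion}. For (c), I would first establish the weak convergence $\mu_n \to \nu$: the Cauchy transforms $\{G_{\mu_n}\}$ are uniformly bounded on compacta of $\C^+$ by $1/\Im z$, so Vitali's convergence theorem turns the pointwise convergence on $\bS_\rho$ into convergence on all compacta of $\C^+$, and Theorem \ref{thm:convergence_in_distribution} then delivers weak convergence. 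Fatou's lemma subsequently passes the uniform second-moment bound from the $\mu_n$ to $\nu$, and Chebyshev's inequality controls the tail contribution by a polynomial in $A^{-1}$ and $v^{-1}$.

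With these three estimates in hand, the upper bound on $\Delta(\mu_n, \nu)$ takes the schematic form $A^{l+1} v^{-k} \epsilon_n + v^\beta + \mathrm{tail}(A, v)$. Parametrizing $v = \epsilon_n^a$ and $A = \epsilon_n^{-b}$ with $a, b > 0$, each contribution becomes a power of $\epsilon_n$ whose exponent is affine in $(a,b)$. Equating the three exponents to a common target $\sigma > 0$ and solving the resulting linear system in $a, b, \sigma$ then yields the stated exponent $\sigma = \beta / (2 + k + (2-\beta) l)$, whence $\Delta(\mu_n, \nu) \leq D \epsilon_n^\sigma$ with a constant $D$ depending on $\beta$, $k$, $l$, $\rho$, the H\"older constant of $F_\nu$, and $\sup_n \int t^2\, d\mu_n$.

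The principal delicacy will lie in selecting the exact form of Bai's smoothing inequality (which tail integrand, with or without the $v^{-1}$ prefactor, and how the auxiliary cutoffs are arranged) so that the optimization reproduces precisely the exponent $\sigma = \beta/(2 + k + (2-\beta) l)$; superficially similar variants give subtly different denominators. A secondary but essential technicality will be the Vitali--Montel extension needed before Fatou's lemma can be applied to $\nu$. The remaining auxiliary-parameter constraints in Bai's inequality are routine to verify in the regime $v \to 0$, $A \to \infty$.
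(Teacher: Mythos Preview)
Your overall architecture—Bai's smoothing inequality plus an optimization over the height $v$ and the bulk cutoff $A$—is exactly the route the paper takes. However, your three-term balance (bulk Cauchy integral via $\Theta$, H\"older smoothing term, and distribution-function tail via Chebyshev with $B\sim A$) does \emph{not} reproduce the stated exponent. With your bounds one gets, after equating the three contributions,
\[
\sigma \;=\; \frac{\beta}{1+k+2\beta+l(1+\beta)}\,,
\]
which differs from the asserted $\beta/(2+k+(2-\beta)l)$ by a factor $(2\beta-1)(1+l)$ in the denominator. You flagged this risk (``superficially similar variants give subtly different denominators''), but it is not a matter of choosing the right version of Bai's inequality: the missing ingredient is structural.

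What the paper does differently is to decouple the Bai cutoffs $A_n>B_n$ from the optimization altogether. It chooses $B_n$ (and hence $A_n$) so large that the distribution-function tail term is below the target bound by fiat—this is possible because the second moments make $\int |\F_{\mu_n}-\F_\nu|$ finite, but it puts no quantitative constraint on $B_n$. The price is that the Cauchy integral must now be taken over the huge window $[-A_n,A_n]$, where the $\Theta$-bound is useless. The paper therefore inserts an \emph{intermediate} cutoff $K_n$ (a power of $v_n$) and splits the Cauchy integral: on $[-K_n,K_n]$ it uses $\Theta$, while on $\{K_n\le |x|\le A_n\}$ it invokes a separate second-moment estimate for Cauchy transforms (their Lemma~\ref{lem:integral_bound}),
\[
\int_{|x|\ge K_n}\!\!|G_{\mu_n}-G_\nu|\,dx \;\le\; c(\mu_n,\nu)\,W_{v_n}(\mu_n)W_{v_n}(\nu)\!\int_{|x|\ge K_n}\!\frac{dx}{x^2+v_n^2}\,.
\]
This extra tail bound for the \emph{Cauchy} integral is the piece your plan lacks; with it, the optimization runs over $v_n$ and $K_n$ (not $A_n$) and is what produces the exponent $\beta/(2+k+(2-\beta)l)$. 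Your Vitali/Fatou step for transferring the second moment to $\nu$ is fine and is used in the paper as well.
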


The proof will be given in Subsection \ref{subsec:Hoelder_criterion_proof}. If one replaces \ref{it:cond-3} by the much stronger condition that all $\mu_n$ have support contained in a fixed compact interval, one can establish with similar but significantly simplified arguments a better rate for the Kolmogorov distance. We present the precise statement in the next theorem; a brief sketch of its largely straightforward proof is given at the end of Subsection \ref{subsec:Hoelder_criterion_proof}.

\begin{theorem}\label{thm:Hoelder_criterion_compact}
Let $(\mu_n)_{n=1}^\infty$ be a sequence of compactly supported Borel probability measures on $\R$ and let $\nu$ be any other Borel probability measure on $\R$. Suppose the following:
\begin{enumerate}
 \item The cumulative distribution function $\F_\nu$ of the measure $\nu$ is H\"older continuous with exponent $\beta\in(0,1]$ and a H\"older constant $C>0$.
 \item There are continuous functions $\Theta: \bS_\rho \to [0,\infty)$ and $\Theta_0: \overline{\bS_\rho} \to [0,\infty)$ for some $0<\rho\leq\infty$ that satisfy $$\Theta(z) \leq \frac{\Theta_0(z)}{\Im(z)^k} \qquad\text{for all $z\in\bS_\rho$}$$ for some $k\geq 0$, and a sequence $(\epsilon_n)_{n=1}^\infty$ in $(0,\infty)$ converging to $0$ such that the estimate $$|G_{\mu_n}(z) - G_\nu(z)| \leq \Theta(z) \epsilon_n$$ holds for every $n\in\N$ and all $z\in \bS_\rho$.
 \item There exists $M>0$ such that $\supp(\mu_n) \subseteq [-M,M]$ for all $n\in\N$.
\end{enumerate}
Then, $(\mu_n)_{n=1}^\infty$ converges in Kolmogorov distance to $\nu$; in fact, there is $D>0$, such that
$$\Delta(\mu_n,\nu) \leq D \epsilon_n^{\frac{\beta}{k+\beta}} \qquad\text{for all $n\in\N$}.$$
\end{theorem}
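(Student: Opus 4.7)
The plan is to imitate the proof of Theorem \ref{thm:Hoelder_criterion} while exploiting the uniform support bound $\supp(\mu_n)\subseteq[-M,M]$ to eliminate both the tail error that previously required the second moment condition (iii) and the polynomial growth hypothesis on $\Theta_0$. The central device will be a Bai-type smoothing inequality (as in \cite{Bai93a,Bai93b,BS10}) that bounds $\Delta(\mu_n,\nu)$ by a line integral of $|G_{\mu_n}-G_\nu|$ along some horizontal line $\{\Im z = v\}$ above the real axis, plus a smoothing error governed by the local modulus of continuity of $\F_\nu$.

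I would first observe that hypothesis (ii) together with Theorem \ref{thm:convergence_in_distribution} yields $\mu_n\to\nu$ in distribution, so that the compact support of the $\mu_n$ forces $\supp(\nu)\subseteq[-M,M]$ as well. Fixing $A:=M+1$ and a parameter $v\in(0,\min\{1,\rho\})$ to be optimized later, the required form of Bai's inequality would read
$$
\Delta(\mu_n,\nu)\ \leq\ c_1\int_{-A}^{A}\bigl|G_{\mu_n}(x+iv)-G_{\nu}(x+iv)\bigr|\,dx\ +\ \frac{c_2}{v}\sup_{x\in\R}\int_{|y|\leq c_3 v}\bigl|\F_\nu(x+y)-\F_\nu(x)\bigr|\,dy,
$$
with universal numerical constants $c_1,c_2,c_3$; the tail contributions that remain in the general statement vanish here because both $\supp(\mu_n)$ and $\supp(\nu)$ sit strictly inside $[-A,A]$.

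The two summands would then be estimated separately. The first is handled by hypothesis (ii): on the compact box $|x|\leq A$, $0<v<\min\{1,\rho\}$ one has $|G_{\mu_n}(x+iv)-G_\nu(x+iv)|\leq K_0\,\epsilon_n\, v^{-k}$ with $K_0:=\sup_{0\leq r\leq\sqrt{A^2+1}}\Theta_0(r)<\infty$ by continuity of $\Theta_0$ on a compact set, so the line integral is $O(\epsilon_n\, v^{-k})$. The second is handled by hypothesis (i): since $|\F_\nu(x+y)-\F_\nu(x)|\leq C|y|^\beta$, the smoothing term is a constant multiple of $v^\beta$. Combining gives $\Delta(\mu_n,\nu)\leq K_1\epsilon_n v^{-k}+K_2 v^\beta$, and the choice $v:=\epsilon_n^{1/(k+\beta)}$ balances the two terms to produce the desired rate $\Delta(\mu_n,\nu)\leq D\,\epsilon_n^{\beta/(k+\beta)}$. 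For the finitely many indices $n$ with $v\not\in(0,\min\{1,\rho\})$ the estimate is trivial (since $\Delta\leq 1$) and can be absorbed into $D$.

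The main obstacle I anticipate is locating, or re-deriving cleanly, the precise form of Bai's inequality in which the tail terms truly drop out for compactly supported measures and in which the smoothing error is expressed as a modulus-of-continuity integral of $\F_\nu$. Once that inequality is in hand every subsequent step is an elementary integral estimate, which is presumably why the authors found it reasonable to leave the details to the reader.
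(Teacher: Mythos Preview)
Your proposal is correct and follows essentially the same approach as the paper: as indicated in Remark~\ref{rem:Hoelder_criterion_compact}, one applies Bai's inequality (Theorem~\ref{thm:Kolmogorov}) with fixed $A>B>M$ so that the tail integral $\int_{|t|>B}|\F_{\mu_n}(t)-\F_\nu(t)|\,dt$ vanishes (using, as you note, that weak convergence forces $\supp(\nu)\subseteq[-M,M]$), and then balances the remaining two terms via $y_n=\epsilon_n^{1/(k+\beta)}$. The ``obstacle'' you anticipate is thus resolved directly by Theorem~\ref{thm:Kolmogorov} with this choice of $B$.
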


\subsection{Bai's inequalities}

The proof of Theorem \ref{thm:Hoelder_criterion} relies crucially on the following result, which is \cite[Theorem 2.2]{Bai93a}; see also \cite[Theorem 2.2]{Bai93b}.

\begin{theorem}\label{thm:Kolmogorov}
Let $\mu$ and $\nu$ be two Borel probability measures such that
\begin{equation}\label{eq:BS_condition-0}
\int_\R |\F_\mu(t) - \F_\nu(t)|\, dt < \infty.
\end{equation}
Then, for every $y>0$,
\begin{align*}
\Delta(\mu,\nu)
&\leq \frac{1}{\pi(1-\kappa)(2\gamma-1)}\Bigg[\int^A_{-A} |G_\mu(x+iy)-G_\nu(x+iy)|\, dx\\
&\quad + \frac{2\pi}{y} \int_{|t|>B} |\F_\mu(t) - \F_\nu(t)|\, dt + \frac{1}{y} \sup_{t\in\R} \int_{|s|\leq 2ya} |\F_\nu(t+s)-\F_\nu(t)|\, ds\Bigg],
\end{align*}
where $a$ and $\gamma$ are constants related to each other by
\begin{equation}\label{eq:BS_condition-1}
\gamma = \frac{1}{\pi} \int_{|x|<a} \frac{1}{x^2+1}\, dx > \frac{1}{2}
\end{equation}
and $A$, $B$, and $\kappa$ are positive constants such that $A>B$ and
\begin{equation}\label{eq:BS_condition-2}
\kappa = \frac{4B}{\pi(A-B)(2\gamma-1)} < 1.
\end{equation}
\end{theorem}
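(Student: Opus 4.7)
The plan is to establish the inequality via Cauchy/Poisson smoothing, reducing $\sup_t |\F_\mu(t) - \F_\nu(t)|$ to quantities expressible through the Cauchy transforms on the line $\Im z = y$. Write $F := \F_\mu - \F_\nu$ and $\Delta := \Delta(\mu,\nu)$, and let $p_y(x) := \frac{y}{\pi(x^2+y^2)}$ denote the Cauchy/Poisson kernel. The starting point is the Stieltjes inversion identity $(\mu * p_y)(x) = -\frac{1}{\pi}\Im G_\mu(x+iy)$, which upon integrating from $-\infty$ to $x$ and noting that $F*p_y$ vanishes at $\pm\infty$ yields the representation
\begin{equation*}
(F*p_y)(x) \;=\; -\frac{1}{\pi}\int_{-\infty}^x \Im(G_\mu - G_\nu)(u+iy)\,du.
\end{equation*}
Thus smoothed differences of cumulatives are controlled by integrals of $G_\mu - G_\nu$ along the horizontal line $\Im z = y$.

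By right-continuity one can choose $t_0 \in \R$ with $|F(t_0)| \geq \Delta - \varepsilon$ for arbitrarily small $\varepsilon>0$, and after possibly swapping $\mu$ and $\nu$ assume $F(t_0) \geq \Delta - \varepsilon$. The $L^1$-hypothesis \eqref{eq:BS_condition-0} combined with $|F| \leq \Delta$ controls $|t_0|$, so that up to a case distinction one may arrange $t_0 + ya \in [-A, A]$. The key pointwise step compares $F(t_0)$ with the smoothed value at the shifted point $t_0 + ya$: writing $(F*p_y)(t_0+ya) = \int F(t_0 + y(a-v))\,p_1(v)\,dv$ and using monotonicity of $\F_\mu$ on $|v|\leq a$ (where $a - v \geq 0$) gives $F(t_0+y(a-v)) \geq F(t_0) - [\F_\nu(t_0+y(a-v)) - \F_\nu(t_0)]$. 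Integrating this against $p_1$ on $|v|\leq a$ (contributing the factor $\gamma = \frac{1}{\pi}\int_{|v|\leq a}\frac{dv}{v^2+1}$) and bounding $|F| \leq \Delta$ on $|v|>a$, then letting $\varepsilon \to 0$, a change of variables $w = y(a-v)$ produces after rearrangement
\begin{equation*}
(2\gamma - 1)\Delta \;\leq\; (F*p_y)(t_0 + ya) \;+\; \frac{1}{\pi y}\sup_{t\in\R}\int_{|s|\leq 2ya}|\F_\nu(t+s) - \F_\nu(t)|\,ds,
\end{equation*}
which already accounts for the $(2\gamma-1)^{-1}$ prefactor and the $\F_\nu$-modulus term $I_3$ of the statement.

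It remains to bound $(F*p_y)(t_0 + ya)$ by quantities localized on $[-A,A]$. Using the above Stieltjes-inversion representation with $x = t_0+ya$ and splitting the integration at $\pm A$, the part over $[-A, A]$ is bounded by $\frac{1}{\pi}\int_{-A}^A |G_\mu - G_\nu|(u+iy)\,du$, directly producing $I_1$. The boundary values $(F*p_y)(\pm A)$ are rewritten as $\int F(t)\,p_y(\pm A - t)\,dt$ and split according to $|t|\leq B$ versus $|t|>B$: on $|t|\leq B$ one bounds $|F| \leq \Delta$ and uses the sharp Poisson decay $p_y(u) \leq \frac{1}{2\pi(A-B)}$ for $|u|\geq A-B$ (which follows from $u^2+y^2 \geq 2y|u|$), yielding a contribution proportional to $\frac{B\Delta}{\pi(A-B)}$ from each of $+A$ and $-A$; on $|t|>B$ the uniform bound $p_y \leq \frac{1}{\pi y}$ combined with the $L^1$-hypothesis produces $\frac{1}{\pi y}\int_{|t|>B}|F|\,dt$, i.e., the $I_2$-term.

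Collecting these contributions yields a self-referential inequality of the form
\begin{equation*}
(2\gamma - 1)\Delta \;\leq\; \frac{1}{\pi}\Bigl(I_1 + \tfrac{2\pi}{y}I_2 + \tfrac{1}{y}I_3\Bigr) \;+\; \kappa(2\gamma - 1)\Delta,
\end{equation*}
in which the $\Delta$-term on the right arises by summing the two boundary contributions at $+A$ and $-A$ (each of size $\frac{B\Delta}{\pi(A-B)}$) and multiplying by $(2\gamma-1)^{-1}$, exactly reproducing $\kappa = \frac{4B}{\pi(A-B)(2\gamma-1)}$. The hypothesis $\kappa < 1$ permits absorbing this $\Delta$-term into the left-hand side; dividing through by $(1-\kappa)(2\gamma-1)$ gives the stated inequality with prefactor $[\pi(1-\kappa)(2\gamma-1)]^{-1}$. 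The main obstacle is the precise bookkeeping of constants: carrying exactly the factor $4B/\pi(A-B)$ out of the two boundary estimates depends on the sharp Poisson decay bound above and on the proper alignment of the shift $ya$ with the modulus-of-continuity integral $I_3$, and the case where $t_0+ya$ falls outside $[-A,A]$ must be handled separately using the $L^1$-hypothesis. These are routine but delicate pieces of accounting rather than conceptual difficulties.
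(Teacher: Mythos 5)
The paper does not prove Theorem~\ref{thm:Kolmogorov} at all: it is quoted verbatim from \cite[Theorem 2.2]{Bai93a} (and Bai--Silverstein's book) and used as a black box. So there is no in-paper proof to compare against, and your task was in effect to reconstruct Bai's classical argument.

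Your sketch is the right reconstruction in outline: Poisson (Cauchy) smoothing of $F=\F_\mu-\F_\nu$ via the inversion identity, comparison of $(F*p_y)$ with $F$ at a shifted point $t_0+ya$ using monotonicity of $\F_\mu$ (producing the $(2\gamma-1)$ factor and the modulus-of-continuity term $I_3$), and then estimation of $(F*p_y)$ by cutting the $t$-integral at $\pm B$ and the $x$-integral at $\pm A$ (producing $I_1$, $I_2$ and the absorbable $\Delta$-term). This is exactly the mechanism behind Bai's lemma. However, two points in the bookkeeping do not close as written. First, from the inversion representation only \emph{one} boundary value of $F*p_y$ (at $-A$, or at $+A$ after using $\int_\R\Im(G_\mu-G_\nu)=0$) is needed, and the decay $p_y(u)\le\frac{1}{2\pi(A-B)}$ for $|u|\ge A-B$ gives a single contribution $\frac{B\Delta}{\pi(A-B)}$; your claim that summing ``two boundary contributions'' of this size reproduces $\kappa(2\gamma-1)=\frac{4B}{\pi(A-B)}$ is off by a factor of $2$. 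Similarly your pointwise Poisson bound yields the coefficient $\frac{1}{\pi y}$ on $I_2$, not the $\frac{2\pi}{y}$ appearing in the statement. Carried through carefully your estimates actually yield a \emph{sharper} inequality than the one quoted, which of course implies it, but the claim to ``exactly reproduce'' the stated $\kappa$ is unsubstantiated. Second, the case $t_0+ya\notin[-A,A]$ is not resolved by the $L^1$-hypothesis alone (which does not locate $t_0$); the clean fix is to bound $\|F*p_y\|_\infty$ directly, using the pointwise Poisson estimate (with no $I_1$ term) when the evaluation point lies outside $[-A,A]$ and the inversion representation from $-A$ when it lies inside, so that the location of $t_0$ never enters. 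With these repairs the argument is complete, and since the paper itself only cites the result, offering such a self-contained proof would be a genuine addition.
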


This useful methodology to control the Kolmogorov distance in terms of the corresponding Cauchy transforms is surveyed  nicely in the book \cite{BS10}.

\subsection{Bounding integrals of Cauchy transforms}

In order to apply Theorem \ref{thm:Kolmogorov}, we will have to control integrals of the form
$$\int_{|x|\geq A} |G_\mu(x+iy)-G_\nu(x+iy)|\, dx$$
as $A\to\infty$, uniformly over a large class of measures. Providing such bounds is the purpose of this subsection.

For a Borel probability measure $\mu$ on $\R$ having finite first and second moments, we denote by
$$m(\mu) := \int_\R t\, d\mu(t) \qquad\text{and}\qquad \sigma^2(\mu) := \int_\R (t-m(\mu))^2\, d\mu(t)$$
its \emph{mean} and \emph{variance}, respectively. Furthermore, in preparation of the next lemma, we define another quantity that is associated to $\mu$ and any real number $y>0$ by
$$W_y(\mu) := \bigg(1 + \frac{1}{2y} \int_\R |t|\, d\mu(t) + \frac{1}{2y^2} \int_\R t^2\, d\mu(t)\bigg)^{1/2}.$$
Moreover, if two such measures $\mu$ and $\nu$ are given, we put
$$c(\mu,\nu) := \Big(\sigma^2(\mu) + \sigma^2(\nu) + (m(\mu)-m(\nu))^2\Big)^{1/2}.$$
Using that notation, we are ready to formulate with the next lemma the desired integral bounds.

\begin{lemma}\label{lem:integral_bound}
Let $\mu$ and $\nu$ be any two Borel probability measures on $\R$ having finite first and second moments. Then, for each $y>0$ and for all $A>0$, it holds true that
\begin{equation}\label{eq:integral_bound-2}
\int_{|x|\geq A} |G_\mu(x+iy)-G_\nu(x+iy)|\, dx \leq c(\mu,\nu) W_y(\mu) W_y(\nu) \int_{|x|\geq A} \frac{1}{x^2+y^2}\, dx.
\end{equation}
\end{lemma}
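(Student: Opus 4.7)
The plan is to derive a bound on $|G_\mu(z) - G_\nu(z)|$ via two applications of the Cauchy--Schwarz inequality. Using $\int d\mu = \int d\nu = 1$ and the identity $\frac{1}{z-t} - \frac{1}{z-s} = \frac{t-s}{(z-t)(z-s)}$, one writes
$$G_\mu(z) - G_\nu(z) = \iint_{\R \times \R} \frac{t-s}{(z-t)(z-s)} \, d\mu(t) \, d\nu(s).$$
A first Cauchy--Schwarz on the product measure $d\mu \otimes d\nu$ then yields
$$|G_\mu(z) - G_\nu(z)|^2 \leq \Bigl(\iint (t-s)^2 \, d\mu \, d\nu\Bigr) \cdot \Bigl(\iint \frac{d\mu(t) \, d\nu(s)}{|z-t|^2 |z-s|^2}\Bigr) = c(\mu,\nu)^2 \, \tilde{G}_\mu(z) \, \tilde{G}_\nu(z),$$
where $\tilde{G}_\mu(z) := \int d\mu(t)/|z-t|^2$ (equivalently, $-\Im G_\mu(z)/y$); expanding $(t-s)^2$ and using $\int t^2\, d\mu = \sigma^2(\mu) + m(\mu)^2$ identifies the first factor as $c(\mu,\nu)^2$, while the second factorizes trivially.

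The second key step is a comparison of Cauchy kernels. Writing $x^2 + y^2 = (x-t)^2 + y^2 + 2t(x-t) + t^2$ and estimating $|2t(x-t)|/((x-t)^2+y^2) \leq |t|/y$ via the AM--GM bound $(x-t)^2 + y^2 \geq 2y|x-t|$, together with $t^2/((x-t)^2+y^2) \leq t^2/y^2$, gives the pointwise bound
$$\frac{x^2+y^2}{(x-t)^2+y^2} \leq 1 + \frac{|t|}{y} + \frac{t^2}{y^2}.$$
Integrating against $d\mu(t)$ and invoking Fubini over $|x|\geq A$ yields
$$\int_{|x|\geq A} \tilde G_\mu(z) \, dx \leq \Bigl(1 + \tfrac{1}{y}\int |t|\, d\mu + \tfrac{1}{y^2}\int t^2 \, d\mu\Bigr) \int_{|x|\geq A} \frac{dx}{x^2+y^2},$$
whose right-hand side is directly comparable to $W_y(\mu)^2 \int_{|x|\geq A} dx/(x^2+y^2)$, and analogously for $\nu$.

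To combine the two estimates I would avoid integrating the square root $\sqrt{\tilde G_\mu \tilde G_\nu}$ directly and instead apply the AM--GM splitting
$$\sqrt{\tilde G_\mu(z) \tilde G_\nu(z)} \leq \tfrac{1}{2}\Bigl(\tfrac{W_y(\nu)}{W_y(\mu)} \tilde G_\mu(z) + \tfrac{W_y(\mu)}{W_y(\nu)} \tilde G_\nu(z)\Bigr),$$
in which the weights $W_y(\nu)/W_y(\mu)$ and $W_y(\mu)/W_y(\nu)$ are chosen precisely so that integrating over $|x|\geq A$ and substituting the $\tilde G$-bound from the previous paragraph into each term produces a common factor $W_y(\mu) W_y(\nu) \int_{|x|\geq A} dx/(x^2+y^2)$; multiplying by $c(\mu,\nu)$ from the first step then delivers the claim. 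The main obstacle I expect is numerical bookkeeping: the crude AM--GM estimates lead to $(x^2+y^2)\tilde G_\mu \lesssim 2 W_y(\mu)^2$ rather than $\leq W_y(\mu)^2$ (essentially because $1 + |t|/y + t^2/y^2 = 2W_y(\mu)^2 - 1$ rather than $W_y(\mu)^2$), so the delicate point will be checking that these factors of $2$ can be absorbed or that a sharper kernel comparison yields the stated inequality on the nose.
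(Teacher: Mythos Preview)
Your overall architecture matches the paper's: write $G_\mu-G_\nu$ as the double integral of $(t-s)/((z-t)(z-s))$, apply Cauchy--Schwarz in $(t,s)$ to extract $c(\mu,\nu)$, then bound $\int_{|x|\geq A}\tilde G_\mu\,dx$ by $W_y(\mu)^2\int_{|x|\geq A}\frac{dx}{x^2+y^2}$ and combine the two halves. The paper uses a second Cauchy--Schwarz in $x$ (i.e., $\int\sqrt{\tilde G_\mu\tilde G_\nu}\,dx\leq(\int\tilde G_\mu)^{1/2}(\int\tilde G_\nu)^{1/2}$) rather than your weighted AM--GM, but these give the same outcome.

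The gap is exactly the one you flag: your \emph{pointwise} kernel comparison
\[
\frac{x^2+y^2}{(x-t)^2+y^2}\leq 1+\frac{|t|}{y}+\frac{t^2}{y^2}
\]
yields $\int_{|x|\geq A}\tilde G_\mu\,dx\leq (2W_y(\mu)^2-1)\int_{|x|\geq A}\frac{dx}{x^2+y^2}$, a factor of $2$ too large, and this cannot be repaired pointwise: at $x=t$ the left side equals $1+t^2/y^2$, which exceeds $1+\frac{|t|}{2y}+\frac{t^2}{2y^2}$ whenever $|t|>y$. The paper gets the sharp constant by working at the level of the integral rather than pointwise. After Fubini and the substitution $x\mapsto x+t$ one has
\[
\int_{|x|\geq A}\frac{dx}{(x-t)^2+y^2}=\int_{|x+t|\geq A}\frac{dx}{x^2+y^2},
\]
and for $t\geq 0$ a set-theoretic identity shows this equals $\int_{|x|\geq A}\frac{dx}{x^2+y^2}$ plus the difference $\int_{A-t}^{A}-\int_{-A-t}^{-A}$ of two integrals of the symmetric integrand $\frac{1}{x^2+y^2}$. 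Rewriting that difference (via $x\mapsto x+t$ in the first term) and applying AM--GM to the resulting expression produces the bound $\big(1+\frac{|t|}{2y}+\frac{t^2}{2y^2}\big)\int_{|x|\geq A}\frac{dx}{x^2+y^2}$; the extra $\tfrac12$'s come precisely from the cancellation between the region gained on one side of the shift and the region lost on the other, a cancellation invisible to any pointwise estimate. Integrating against $d\mu(t)$ then gives exactly $W_y(\mu)^2$.

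So your argument as written proves the lemma with an extra factor of $2$; for the downstream application (Theorem~\ref{thm:Hoelder_criterion}) this is harmless since only the existence of some constant $D$ is asserted, but to obtain the inequality exactly as stated you need the integral-level comparison above rather than the pointwise one.
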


The proof of Lemma \ref{lem:integral_bound} relies substantially on the following observation.

\begin{lemma}\label{lem:integral_bound-preliminary}
Let $\mu$ be a Borel probability measures on $\R$ having finite first and second moments. Then, for each $y>0$ and for all $A>0$,
\begin{equation}\label{eq:integral_bound-7}
\int_{|x|\geq A} \int_\R \frac{1}{(x-t)^2+y^2}\, d\mu(t)\, dx \leq W_y(\mu)^2 \int_{|x|\geq A} \frac{1}{x^2+y^2}\, dx.
\end{equation} 
\end{lemma}

\begin{proof}
Using Fubini's theorem and in turn a substitution, we may compute that
\begin{equation}\label{eq:integral_bound-5}
\begin{aligned}
\int_{|x|\geq A} \int_\R \frac{1}{(x-t)^2+y^2}\, d\mu(t)\, dx
&= \int_\R \int_{|x+t|\geq A} \frac{1}{x^2+y^2}\, dx\, d\mu(t).
\end{aligned}
\end{equation}
We want to control the integrand $\int_{|x+t|\geq A} \frac{1}{x^2+y^2}\, dx$ for every fixed $t\in\R$. We consider the case $t\geq0$ first. To begin with, we observe that
$$\{ x\in\R \mid |x+t|\geq A \} \cup (-A-t,-A] = \{ x\in\R \mid |x|\geq A\} \cup [A-t,A),$$
where, in the case $t<2A$, the sets on both sides are disjoint, and otherwise
\begin{align*}
\{ x\in\R \mid |x+t|\geq A \} \cap (-A-t,-A] &= \{ x\in\R \mid A-t \leq x \leq -A\}\\
                                             &= \{ x\in\R \mid |x|\geq A\} \cap [A-t,A).
\end{align*}
Thus, with respect to the measure $\rho_y$ that is given by $d\rho_y(x) = \frac{1}{x^2+y^2}\, dx$, we have in either case that
\begin{align*}
\lefteqn{\rho_y\big(\{ x\in\R \mid |x+t|\geq A \}\big) + \rho_y\big((-A-t,-A]\big)}\\
&\qquad = \rho_y\big(\{ x\in\R \mid |x|\geq A\}\big) + \rho_y\big([A-t,A)\big),
\end{align*}
which gives us that
\begin{align*}
\int_{|x+t|\geq A} \frac{1}{x^2+y^2}\, dx
&= \int_{|x|\geq A} \frac{1}{x^2+y^2}\, dx + \frac{t}{y} \int^{A+t}_A \frac{2(x-t)y}{(x-t)^2+y^2} \frac{1}{x^2+y^2}\, dx\\
&\qquad\qquad + t^2 \int^{A+t}_A \frac{1}{(x-t)^2+y^2} \frac{1}{x^2+y^2}\, dx.
\end{align*}
The second integral in the last line above can be estimated by the inequality of arithmetic and geometric means as
$$\bigg| \int^{A+t}_A \frac{2(x-t)y}{(x-t)^2+y^2} \frac{1}{x^2+y^2}\, dx \bigg|
 \leq \int^\infty_A \frac{1}{x^2+y^2}\, dx.$$
For the third integral, which has a positive integrand, we see that
$$\int^{A+t}_A \frac{1}{(x-t)^2+y^2} \frac{1}{x^2+y^2}\, dx \leq \frac{1}{y^2} \int^\infty_A \frac{1}{x^2+y^2}\, dx.$$
Thus, in summary, we have that
\begin{equation}\label{eq:integral_bound-6}
\int_{|x+t|\geq A} \frac{1}{x^2+y^2}\, dx \leq \Big(1 + \frac{|t|}{2y} + \frac{t^2}{2y^2}\Big) \int_{|x|\geq A} \frac{1}{x^2+y^2}\, dx.
\end{equation}
So far, we have established \eqref{eq:integral_bound-6} only in the case $t\geq0$, we claim, however, that it also holds for every $t\leq 0$. To see this, we note that the integral on the left hand side is taken over a mirror symmetric function, which gives that $\int_{|x+t|\geq A} \frac{1}{x^2+y^2}\, dx = \int_{|x+(-t)|\geq A} \frac{1}{x^2+y^2}\, dx$, and since the right hand side of \eqref{eq:integral_bound-6} remains the same if $t$ is replaced by $-t$, we infer that \eqref{eq:integral_bound-6} holds verbatim also for $t\leq 0$.

Inserting the bound \eqref{eq:integral_bound-6} into the formula \eqref{eq:integral_bound-5}, we obtain \eqref{eq:integral_bound-7}.
\end{proof}

In the following, the integral that appears on the left hand side of the inequality \eqref{eq:integral_bound-7} will be denoted by $J_\mu(y;A)$ .

\begin{proof}[Proof of Lemma \ref{lem:integral_bound}]
Let us first take any $z\in\C^+$. We may write
\[
G_\mu(z) - G_\nu(z) = \int_\R \int_\R \frac{t-s}{(z-t)(z-s)}\, d\mu(t)\, d\nu(s),
\]
which yields after an application of the Cauchy-Schwarz inequality
\begin{equation}\label{eq:integral_bound-3}
\begin{aligned}
\lefteqn{|G_\mu(z) - G_\nu(z)|}\\
& \qquad \leq c(\mu,\nu) \bigg(\int_\R \frac{1}{|z-t|^2}\, d\mu(t)\bigg)^{1/2} \bigg(\int_\R \frac{1}{|z-s|^2}\, d\nu(s)\bigg)^{1/2}.
\end{aligned}
\end{equation}
Now, let us fix any $y>0$. In order to establish \eqref{eq:integral_bound-2}, we use \eqref{eq:integral_bound-3} and again the Cauchy-Schwarz inequality; this gives for every $A>0$
\begin{equation}\label{eq:integral_bound-4} 
\int_{|x|\geq A} |G_\mu(x+iy) - G_\nu(x+iy)|\, dx \leq c(\mu,\nu) J_\mu(y;A)^{1/2} J_\nu(y;A)^{1/2}.
\end{equation}
Using Lemma \ref{lem:integral_bound-preliminary}, we infer from \eqref{eq:integral_bound-4} the validity of \eqref{eq:integral_bound-2}.
\end{proof}

\begin{remark}
Another interesting estimating which is however not sufficient for our purposes is the following:
\begin{equation}\label{eq:integral_bound-1}
\int_\R |G_\mu(x+iy)-G_\nu(x+iy)|\, dx \leq \frac{\pi}{y} c(\mu,\nu).
\end{equation}
It can be simply proved following the strategy of the proof of Lemma \ref{lem:integral_bound}.
\end{remark}

\subsection{Convergence in distribution and absolute moments}

Let us remind ourselves of the following well-known fact.

\begin{lemma}\label{lem:absolute_moments}
Let $(\mu_n)_{n=1}^\infty$ a sequence of Borel probability measures on $\R$ which converges in distribution to a Borel probability measure $\nu$ on $\R$. Suppose that, for some $p\geq 1$,
$$\sup_{n\in\N} \int_\R |t|^p\, d\mu_n(t) < \infty$$
holds. Then
$$\int_\R |t|^p\, d\nu(t) \leq \sup_{n\in\N} \int_\R |t|^p\, d\mu_n(t).$$
\end{lemma}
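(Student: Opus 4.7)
The plan is to use the classical fact that convergence in distribution only tests bounded continuous functions, so in order to extract information about the unbounded function $t \mapsto |t|^p$ one needs to truncate and then pass to the limit in the truncation. Set $S := \sup_{n\in\N} \int_\R |t|^p\, d\mu_n(t)$, which is finite by hypothesis.

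First, for each $M>0$, introduce the bounded continuous function $f_M:\R \to [0,\infty)$ defined by $f_M(t) := \min(|t|^p, M^p)$. Since $f_M$ is bounded and continuous, convergence in distribution of $(\mu_n)$ to $\nu$ yields
\[
\int_\R f_M(t)\, d\nu(t) = \lim_{n\to\infty} \int_\R f_M(t)\, d\mu_n(t).
\]
On the other hand, since $f_M(t) \leq |t|^p$ pointwise, each integral on the right is bounded by $S$, so that
\[
\int_\R f_M(t)\, d\nu(t) \leq S \qquad \text{for every $M>0$.}
\]

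Finally, as $M\to\infty$, the functions $f_M$ increase pointwise to $|t|^p$, so the monotone convergence theorem gives
\[
\int_\R |t|^p\, d\nu(t) = \lim_{M\to\infty} \int_\R f_M(t)\, d\nu(t) \leq S,
\]
which is the desired inequality. The argument has no real obstacle; the only subtle point is that one cannot directly apply the definition of weak convergence to the unbounded function $|t|^p$, which is exactly what the truncation $f_M$ circumvents. Note that the conclusion in particular shows $\nu$ has finite $p$-th absolute moment, a fact which is used implicitly.
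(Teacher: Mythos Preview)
Your argument is correct and is the standard truncation-plus-monotone-convergence proof of this well-known fact. The paper does not supply a proof of this lemma at all (it is merely stated as a reminder of a well-known result), so there is nothing to compare against; your proof would serve perfectly well as the missing justification.
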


\subsection{The proof of Theorem \ref{thm:Hoelder_criterion}}\label{subsec:Hoelder_criterion_proof}

Now, we are prepared to give the proof of Theorem \ref{thm:Hoelder_criterion}. In doing so, we will follow the strategy of Theorem \ref{thm:Kolmogorov}, for which we will need the bounds that were derived in Lemma \ref{lem:integral_bound}.

\begin{proof}[Proof of Theorem \ref{thm:Hoelder_criterion}]
First, we fix $a$ and $\gamma$ according to the condition \eqref{eq:BS_condition-1} in Theorem \ref{thm:Kolmogorov} and we choose any $\kappa \in(0,1)$. We then define sequences $(y_n)_{n=1}^\infty$ and $(K_n)_{n=1}^\infty$ in $(0,\infty)$ by
$$y_n := \epsilon_n^{\frac{1}{2+k+(2-\beta)l}} \qquad\text{and}\qquad K_n := \frac{1}{y_n^{2-\beta}}$$
for every $n\in\N$; note that we clearly have $y_n \to 0$ and $K_n \to \infty$ as $n\to\infty$. We proceed now as follows:

\begin{itemize}
 \item The H\"older continuity condition in Item \ref{it:cond-1} yields for every $n\in\N$ that
$$\int_{|s|\leq 2y_na} |\F_\nu(t+s)-\F_\nu(t)|\, ds \leq 2C \int^{2y_na}_0 s^\beta\, ds = \frac{2C(2y_na)^{1+\beta}}{1+\beta}$$
and therefore, with $C_1 := \frac{2C(2a)^{1+\beta}}{1+\beta} > 0$, that
\begin{equation}\label{eq:Kolmogorov_cond-1}
\frac{1}{y_n} \sup_{t\in\R} \int_{|s|\leq 2y_na} |\F_\nu(t+s)-\F_\nu(t)|\, ds \leq C_1 y_n^\beta = C_1 \epsilon_n^{\frac{\beta}{2+k+(2-\beta)l}}.
\end{equation}

\item The condition formulated in Item \ref{it:cond-3} of the theorem guarantees that there are $m_1,m_2>0$ such that
$$\sup_{n\in\N} \int_\R |t|\, d\mu_n(t) \leq m_1 \qquad\text{and}\qquad \sup_{n\in\N} \int_\R t^2\, d\mu_n(t) \leq m_2.$$
Since the assumption made in Item \ref{it:cond-2} of the theorem guarantees due to Theorem \ref{thm:convergence_in_distribution} that $\mu_n \to \nu$ in distribution as $n\to\infty$, Lemma \ref{lem:absolute_moments} tells us that both
\begin{equation}\label{eq:absolute_moments}
\int_\R |t|\, d\nu(t) \leq m_1 \qquad\text{and}\qquad \int_\R t^2\, d\nu(t) \leq m_2.
\end{equation}
Consequently, we also have that
$$c := \sup_{n\in\N} c(\mu_n,\nu) < \infty$$
Using Lemma \ref{lem:integral_bound}, we get that
\begin{align*}
\lefteqn{\int_{|x|\geq K_n} |G_{\mu_n}(x+iy_n)-G_\nu(x+iy_n)|\, dx}\\
&\qquad \leq c W_{y_n}(\mu_n) W_{y_n}(\nu) \int_{|x|\geq K_n} \frac{1}{x^2+y_n^2}\, dx.
\end{align*}
We have then for every $n\in\N$
$$\int_{|x|\geq K_n} \frac{1}{x^2+y_n^2}\, dx \leq \int_{|x|\geq K_n} \frac{1}{x^2}\, dx = \frac{2}{K_n}$$
and furthermore, if $n$ is large enough, 
$$W_{y_n}(\mu_n) W_{y_n}(\nu) \leq \frac{m_2}{y_n^2}.$$
In combination, this shows that for sufficiently large $n\in\N$
$$\int_{|x|\geq K_n} |G_{\mu_n}(x+iy_n)-G_\nu(x+iy_n)|\, dx \leq \frac{2cm_2}{K_ny_n^2} = 2cm_2 \epsilon_n^{\frac{\beta}{2+k+(2-\beta)l}}.$$
We conclude that, for some suitable constant $C_2>0$, for all $n\in\N$
\begin{equation}\label{eq:Kolmogorov_cond-2.1}
\int_{|x|\geq K_n} |G_{\mu_n}(x+iy_n)-G_\nu(x+iy_n)|\, dx \leq C_2 \epsilon_n^{\frac{\beta}{2+k+(2-\beta)l}}.
\end{equation}

 \item Now, we invoke the estimates given in Item \ref{it:cond-2}. We put $R_n := (K_n^2+y_n^2)^{1/2}$ and we note first that for all sufficiently large $n\in\N$
 \begin{itemize}
  \item $R_n < 2^{1/l} K_n$,
  \item $\{x + iy_n \mid x\in [-K_n,K_n]\} \subset \bS_\rho$,
  \item $\max_{r\in[0,R_n]} \Theta_0(r) \leq \theta R_n^l$ for some $\theta>0$.
 \end{itemize}
 Thus, the bound on $\Theta$ yields that 
 $$\max_{x\in [-K_n,K_n]} \Theta(x+iy_n) \leq \frac{1}{y_n^k} \max_{r\in[0,R_n]} \Theta_0(r) \leq \theta \frac{R_n^l}{y_n^k} \leq 2 \theta \frac{K_n^l}{y_n^k},$$
 and with the bound for the Cauchy transforms we conclude that
 $$\max_{x\in[-K_n,K_n]} |G_{\mu_n}(x+iy_n)-G_\nu(x+iy_n)| \leq 2\theta \epsilon_n \frac{K_n^l}{y_n^k}.$$
Using this, we can now verify that for all such $n\in\N$
\begin{align*}
\lefteqn{\int^{K_n}_{-K_n} |G_{\mu_n}(x+iy_n)-G_\nu(x+iy_n)|\, dx}\\
& \qquad \leq 2 K_n \max_{x\in[-K_n,K_n]} |G_{\mu_n}(x+iy_n)-G_\nu(x+iy_n)|\\
& \qquad \leq 4 \theta \epsilon_n \frac{K_n^{l+1}}{y_n^k} = 4 \theta y_n^\beta = 4 \theta \epsilon_n^{\frac{\beta}{2+k+(2-\beta)l}}.
\end{align*}
Hence, we conclude that for all $n\in\N$, with some suitably chosen constant $C_3>0$,
\begin{equation}\label{eq:Kolmogorov_cond-2.2}
\int^{K_n}_{-K_n} |G_{\mu_n}(x+iy_n)-G_\nu(x+iy_n)|\, dx \leq C_3 \epsilon_n^{\frac{\beta}{2+k+(2-\beta)l}}.
\end{equation}

 \item By the fact that $\int_\R t^2\, d\mu_n(t) < \infty$ for every $n\in\N$, \eqref{eq:absolute_moments}, and the Chebyshev inequality, we get for every $n\in\N$
$$\int_\R |\F_{\mu_n}(t) - \F_\nu(t)|\, dt < \infty,$$
so that $\mu_n$ and $\nu$ satisfy condition \eqref{eq:BS_condition-0} of Theorem \ref{thm:Kolmogorov}; furthermore, this guarantees that we can choose $B_n>0$ such that
\begin{equation}\label{eq:Kolmogorov_cond-3}
\frac{1}{y_n} \int_{|t|>B_n} |\F_{\mu_n}(t) - \F_\nu(t)|\, dt \leq \epsilon_n^{\frac{\beta}{2+k+(2-\beta)l}}.
\end{equation}

 \item Now, we associate to the so found sequence $(B_n)_{n=1}^\infty$ another sequence $(A_n)_{n=1}^\infty$ by
$$A_n := B_n \bigg(1 + \frac{4}{\kappa\pi(2\gamma-1)}\bigg) \qquad\text{for all $n\in\N$}.$$
Then, for each $n\in\N$, we have that $A_n > B_n$ and \eqref{eq:BS_condition-2} is satisfied with the $\kappa$ that we have chosen above.

 \item Recall that by construction $A_n > K_n$ for every $n\in\N$; thus, we combine \eqref{eq:Kolmogorov_cond-2.1} and \eqref{eq:Kolmogorov_cond-2.2} to get for all $n\in\N$ that
\begin{equation}\label{eq:Kolmogorov_cond-2}
\int^{A_n}_{-A_n} |G_{\mu_n}(x+iy_n)-G_\nu(x+iy_n)|\, dx < (C_2 + C_3) \epsilon_n^{\frac{\beta}{2+k+(2-\beta)l}}.
\end{equation}

\end{itemize}

Putting these pieces together, we see that for every $n\in\N$, the conditions \eqref{eq:BS_condition-0}, \eqref{eq:BS_condition-1}, and \eqref{eq:BS_condition-2} are satisfied for $A_n$ and $B_n$; therefore, we may apply Theorem \ref{thm:Kolmogorov}, which yields, in combination with \eqref{eq:Kolmogorov_cond-1}, \eqref{eq:Kolmogorov_cond-2}, and \eqref{eq:Kolmogorov_cond-3}, that
$$\Delta(\mu_n,\nu) < D \epsilon_n^{\frac{\beta}{2+k+(2-\beta)l}} \qquad\text{with}\qquad D := \frac{1+C_1+C_2+C_3}{\pi(1-\kappa)(2\gamma-1)}$$
for all $n\in\N$, as claimed.
\end{proof}

\begin{proof}[Proof of Theorem \ref{thm:Hoelder_criterion_compact}]
Like in the proof of Theorem \ref{thm:Hoelder_criterion}, the asserted bound is obtained with the help of Theorem \ref{thm:Kolmogorov}. Here, we choose $A>B>M$ such that condition \eqref{eq:BS_condition-2} is satisfied, and apply Theorem \ref{thm:Kolmogorov} for $y_n := \epsilon_n^{\frac{1}{k+\beta}}$. The details are left to the reader.
\end{proof}

\section{Random matrix applications}\label{sec:random_matrices}

The aim of this section is to discuss some applications of our results in the context of random matrix theory. The simple idea is roughly the following: let $(X_1^{(N)},\dots,X_n^{(N)})$, for every $N\in\N$, be a tuple of selfadjoint random matrices of size $N\times N$ and suppose that their asymptotic behavior as $N\to\infty$ is described by a tuple $(X_1,\dots,X_n)$ of selfadjoint noncommutative random variables   in some tracial $W^\ast$-probability space $(\M,\tau)$ with the property that $\Phi^\ast(X_1,\dots,X_n) < \infty$. For many types of ``noncommutative functions'' $f$, the limiting eigenvalue distribution of the random matrices $Y^{(N)}=f(X_1^{(N)},\dots,X_n^{(N)})$ as $N\to\infty$ is given by the analytic distribution of the operator $Y=f(X_1,\dots,X_n)$.
We shall see how our results in Theorems \ref{thm:Hoelder_continuity}, \ref{thm:convergence_in_distribution}, and \ref{thm:Hoelder_criterion} could be combined to obtain H\"older continuity and provide rates of convergence with respect to the Kolmogorov distance for such matrix models.

As concrete instances of such ``composed'' random matrices we will consider here
\begin{itemize}
 \item for fixed (deterministic) selfadjoint matrices $a_0,a_1,\dots,a_n\in M_d(\C)$, the \emph{generalized block matrices}
\begin{equation}\label{eq:block_matrix} 
 Y^{(N)} := a_0 \otimes 1_N + \sum^n_{j=1} a_j \otimes X_j^{(N)};
\end{equation}
 \item for any selfadjoint noncommutative polynomial $P\in\C\langle x_1,\dots,x_n\rangle$ which is non-constant, the random matrices
\begin{equation}\label{eq:polynomial_evaluation} 
Y^{(N)} := P(X^{(N)}_1,\dots,X^{(N)}_n).
\end{equation}
\end{itemize}
In Section \ref{subsec:Gibbs_laws}, we will work with tuples $(X_1^{(N)},\dots,X_n^{(N)})$ of random matrices that follow general Gibbs laws; this includes the important case of GUEs, which is addressed separately in Section \ref{subsec:GUEs}. In Section \ref{subsec:random_matrices_basics}, we first recall some basic terminology.

\subsection{Random matrices and noncommutative probability theory}\label{subsec:random_matrices_basics}

Various types of random matrices fit nicely into the frame of noncommutative $\ast$-probability spaces. In fact, one can often treat them as noncommutative random variables in the $\ast$-probability space $(\M_N,\tau_N)$ given by the $\ast$-algebra $\M_N := M_N(\C) \otimes L^{\infty -}(\Omega,\bP)$ that is endowed with the tracial state $\tau_N := \tr_N \otimes \bE$ for some classical probability space $(\Omega,\F,\bP)$; note that $\bE$ stands for the associated expectation and $L^{\infty -}(\Omega,\bP) := \bigcap_{p\geq 1} L^p(\Omega,\bP)$.

Let a selfadjoint random matrix $X^{(N)} \in \M_N$ be given. We will be interested in the random eigenvalues $\lambda_1(X^{(N)}),\dots,\lambda_N(X^{(N)})$ of $X^{(N)}$, to which we associate a random probability measure $\mu_{X^{(N)}}$ on $\R$ by
$$\mu_{X^{(N)}} := \frac{1}{N} \sum_{j=1}^N \delta_{\lambda_j(X^{(N)})},$$
called the \emph{empirical eigenvalue distribution of $X^{(N)}$}. By $\overline{\mu}_X$, we will denote the \emph{mean eigenvalue distribution of $X^{(N)}$} which is the probability measure on $\R$ that is defined as $\overline{\mu}_X := \bE[\mu_X]$. We point out that the Cauchy transform of $\overline{\mu}_{X^{(N)}}$ agrees with the Cauchy transform of the noncommutative random variable $X^{(N)}$ in $(\M_N,\tau_N)$, i.e., we have
$$G_{\overline{\mu}_{X^{(N)}}}(z) = \tau_N\big((z 1_N - X^{(N)})^{-1}\big) \qquad\text{for all $z\in\C^+$}.$$

In the following, we shall see random matrices as elements in $M_N(\C)_\sa$ chosen randomly according to some probability measure on this space.

\subsection{Gibbs laws}\label{subsec:Gibbs_laws}

Consider a selfadjoint noncommutative polynomial $V\in\C\langle x_1,\dots,x_n\rangle$; in the following, we will refer to $V$ as a \emph{potential}. Following \cite{GS09}, we say that the potential $V$ is \emph{selfadjoint $(c,M)$-convex} if
$$(DV(X) - DV(Y)) . (X-Y) \geq c (X-Y) . (X-Y)$$
for any $n$-tuples $X=(X_1,\dots,X_n)$ and $Y=(Y_1,\dots,Y_n)$ of selfadjoint operators in some $C^\ast$-algebra $\A$ that are bounded in norm by $M$, where $X . Y := \frac{1}{2} \sum^n_{j=1} (X_j Y_j + Y_j X_j)$.

Suppose now that $V$ is selfadjoint $(c,\infty)$-convex for some $c>0$. We will use $V$ to introduce a probability measure on $M_N(\C)^n_\sa$. For that purpose, let us first define the Lebesgue measure on $M_N(\C)_\sa$ by
$$dX^{(N)} := \prod_{k=1}^N dX_{kk} \prod_{1\leq k < l \leq N} d\Re(X_{kl})\, d\Im(X_{kl}).$$
Further, let $\Tr$ denote the unnormalized trace on $M_N(\C)$. On the space $M_N(\C)^n_\sa$, we then define the probability measure $\bP_V^N$ by
$$d\bP_V^N(X_1^{(N)},\dots,X_n^{(N)}) = \frac{1}{Z_N(V)} \exp\big(-N \Tr(V(X^{(N)}_1,\dots,X^{(N)}_n))\big)\ dX_1^{(N)}\, \dots\, dX_n^{(N)},$$
where $Z_N(V)$ is the normalizing constant that is given by
$$Z_N(V) := \int_{M_N(\C)^n_\sa} \exp\big(-N \Tr(V(X^{(N)}_1,\dots,X^{(N)}_n))\big)\ dX_1^{(N)}\, \dots\, dX_n^{(N)}.$$
We call $\bP^N_V$ the \emph{Gibbs measure with potential $V$}.

The Brascamp-Lieb inequality \cite{BL76} guarantees that those measures are well-defined (i.e., that $Z_N(V)$ is finite) for potentials $V$ that are selfadjoint $(c,\infty)$-convex for some $c>0$. Those measures are extensively studied for instance in \cite{GMS06,GMS07,GS09}; see also the surveys \cite{Gui06,Gui14,Gui16}.

It was shown in \cite{GS09} that $n$-tuples $(X^{(N)}_1,\dots,X^{(N)}_n)$ of selfadjoint random matrices of size $N\times N$ following the Gibbs law $\bP^N_V$ can be described in the limit $N\to\infty$ by an $n$-tuple $(X_1,\dots,X_n)$ of selfadjoint operators in some tracial $W^\ast$-probability space with the property that $\Phi^\ast(X_1,\dots,X_n) <\infty$. Before we can state their result, we need to introduce some further notation: for every noncommutative polynomial $V\in\C\langle x_1,\dots,x_n\rangle$, we denote by $DV = (D_1 V, \dots, D_n V)$ the \emph{cyclic gradient} of $V$; the \emph{cyclic derivatives} $D_1 V, \dots, D_n V$ of $V$ are given by $D_j V = \tilde{m}(\partial_j V)$ for $j=1,\dots,n$, where $\tilde{m}: \C\langle x_1,\dots,x_n\rangle \otimes \C\langle x_1,\dots,x_n\rangle \to \C\langle x_1,\dots,x_n\rangle$ denotes the flipped multiplication that is determined by $\tilde{m}(P_1 \otimes P_2) := P_2 P_1$.

\begin{theorem}[{\cite[Theorem 1.6]{GS09}}]\label{thm:Gibbs}
Let $V$ be selfadjoint $(c,\infty)$-convex for some $c>0$. For every $N\in\N$, let $X^{(N)}=(X^{(N)}_1,\dots,X^{(N)}_n)$ be an $n$-tuple of selfadjoint random matrices of size $N\times N$ with law $\bP^N_V$. Then there is an $n$-tuple $X=(X_1,\dots,X_n)$ of selfadjoint operators in some tracial $W^\ast$-probability space $(\M,\tau)$ (whose joint distribution $\mu_X$ is then in fact uniquely determined) which satisfy the \emph{Schwinger-Dyson equation with respect to the potential $V$}, i.e.,
$$(\tau\otimes\tau)\big((\partial_j P)(X)\big) = \tau\big(P(X) (D_j V)(X)\big)$$
for every $P\in\C\langle x_1,\dots,x_n\rangle$ and all $j=1,\dots,n$, and it holds true for each $P\in\C\langle x_1,\dots,x_n\rangle$ that
$$\lim_{N\to\infty} \tr_N(P(X^{(N)})) = \tau(P(X)) \qquad\text{almost surely}.$$
\end{theorem}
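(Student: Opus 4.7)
The plan is a compactness-plus-uniqueness argument: extract candidate limit moments from the finite-$N$ distributions via a priori bounds, show every limit satisfies the Schwinger-Dyson equation, and exploit the convexity of $V$ to force uniqueness of the solution (which automatically promotes subsequential to full convergence, while a concentration-of-measure estimate upgrades it to almost-sure convergence).

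The first two ingredients are fairly direct. Because $V$ is $(c,\infty)$-convex with $c>0$, the Brascamp-Lieb inequality applied to the density $\propto e^{-N\Tr(V)}$ yields uniform-in-$N$ bounds of the form $\bE[\tr_N((X_j^{(N)})^{2k})]\le C_k$ for every $k\in\N$, so the sequence $P\mapsto\bE[\tr_N(P(X^{(N)}))]$ is bounded on each polynomial and a diagonal extraction produces subsequential pointwise limits of all moments. Entrywise integration by parts against the Gibbs density, using $D_jV=\tilde m(\partial_jV)$ together with the Leibniz rule, gives the exact finite-$N$ identity
$$
\bE\big[(\tr_N\otimes\tr_N)((\partial_j P)(X^{(N)}))\big] \;=\; \bE\big[\tr_N(P(X^{(N)})\,(D_jV)(X^{(N)}))\big]
$$
for every $P\in\C\langle x_1,\dots,x_n\rangle$ and every $j=1,\dots,n$. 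The convexity of $V$ furthermore implies via the Bakry-Émery criterion a log-Sobolev inequality for $\bP^N_V$; a Herbst-type argument then gives exponential concentration at rate of order $1/N^2$ of each Lipschitz functional $X\mapsto\tr_N(Q(X))$ around its mean. This factorizes the expectation on the right hand side asymptotically and, combined with Borel-Cantelli, upgrades convergence in expectation to almost-sure convergence once a single limit has been identified. Passing to any extracted subsequential limit, the finite-$N$ identity turns into the Schwinger-Dyson equation
$$
(\tau\otimes\tau)((\partial_j P)(X)) \;=\; \tau(P(X)\,(D_jV)(X))
$$
announced in the theorem.

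The decisive step is uniqueness: one must show that this Schwinger-Dyson equation admits at most one tracial functional on $\C\langle x_1,\dots,x_n\rangle$ with uniformly bounded support, from which convergence of the full sequence of mean moments (not just of subsequences) follows. This is precisely where $(c,\infty)$-convexity is genuinely needed. One identifies $\tau$ with the unique stationary law of the free stochastic differential equation associated with $V$, strict convexity furnishing the contraction required for that identification; alternatively, a degree-induction on moments may be carried out using the coercive estimate built into $D_jV$. Once uniqueness is established, a GNS construction applied to the limiting tracial functional realizes $(X_1,\dots,X_n)$ as selfadjoint operators in a tracial $W^\ast$-probability space $(\M,\tau)$ in which the Schwinger-Dyson identity continues to hold. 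The main obstacle is exactly this uniqueness step: the Brascamp-Lieb bounds, the entrywise integration by parts, the log-Sobolev / Herbst concentration, and the GNS extraction are all comparatively routine, but ruling out multiple tracial solutions of the Schwinger-Dyson equation requires either the free stochastic calculus machinery of Guionnet-Shlyakhtenko or a careful algebraic induction genuinely exploiting the coercivity provided by $(c,\infty)$-convexity.
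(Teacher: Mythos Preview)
The paper does not prove this theorem at all: it is quoted verbatim as \cite[Theorem 1.6]{GS09} and used as a black box, so there is no ``paper's own proof'' to compare against. Your outline is a faithful high-level summary of the Guionnet--Shlyakhtenko argument (uniform moment bounds via Brascamp--Lieb, the finite-$N$ Schwinger--Dyson identity from integration by parts, concentration from the log-Sobolev inequality implied by convexity, and uniqueness of the tracial solution from $(c,\infty)$-convexity), and you correctly identify the uniqueness step as the substantive part. For the purposes of this paper, however, no proof is expected here.
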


In the situation of Theorem \ref{thm:Gibbs}, the Schwinger-Dyson equation yields that $(\xi_1,\dots,\xi_n)$ with $\xi_j := (D_j V)(X)$ for $j=1,\dots,n$ are the conjugate system for $X=(X_1,\dots,X_n)$; in fact, since $V$ is a polynomial, $(\xi_1,\dots,\xi_n)$ are Lipschitz conjugate variables for $X$.

With the result obtained in the previous subsection, we conclude the following about matrix models of the type \eqref{eq:polynomial_evaluation}.

\begin{corollary}\label{cor:random_matrices_polynomial}
In the situation of Theorem \ref{thm:Gibbs}, the following holds for each selfadjoint noncommutative polynomial $P\in\C\langle x_1,\dots,x_n\rangle$ of degree $d\geq 1$:
\begin{enumerate}
 \item The empirical eigenvalue distribution $\mu_{Y^{(N)}}$ of $$Y^{(N)} = P(X^{(N)}_1,\dots,X^{(N)}_n)$$ converges in distribution almost surely to a compactly supported Borel probability measure $\nu$ on $\R$ whose cumulative distribution function is H\"older continuous with exponent $\frac{1}{2^d-1}$.
 \item We have that $$\lim_{N\to\infty} \Delta(\mu_{Y^{(N)}},\nu) = 0 \quad \text{almost surely} \quad\text{and}\quad \lim_{N\to\infty} \Delta(\overline{\mu}_{Y^{(N)}},\nu) = 0.$$
\end{enumerate}
\end{corollary}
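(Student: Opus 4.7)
The natural candidate for the limit measure is $\nu := \mu_Y$, the analytic distribution of the bounded selfadjoint element $Y := P(X_1,\dots,X_n) \in \M$. Since $Y$ lives in a tracial $W^\ast$-probability space, $\mu_Y$ is compactly supported on $\R$ and hence determined by its moment sequence; this is the feature that will make the moment method work.

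My plan for (i) is a standard moment-method argument combined with the regularity result already established. For any fixed integer $k\geq 0$, the iterated product $P^k$ lies in $\C\langle x_1,\dots,x_n\rangle$, so applying Theorem \ref{thm:Gibbs} to it yields
$$\int_\R t^k\, d\mu_{Y^{(N)}}(t) \,=\, \tr_N\bigl(P(X^{(N)})^k\bigr) \,\xrightarrow[N\to\infty]{}\, \tau\bigl(P(X)^k\bigr) \,=\, \int_\R t^k\, d\mu_Y(t)$$
almost surely. Intersecting the countably many resulting full-measure events indexed by $k\in\N$, I obtain a single event of full probability on which all moments of $\mu_{Y^{(N)}}$ converge to those of $\mu_Y$. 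Since $\mu_Y$ is compactly supported and therefore moment-determined, the method of moments gives weak convergence $\mu_{Y^{(N)}}\to\mu_Y$ almost surely. The H\"older continuity of $\F_\nu=\F_Y$ with exponent $\frac{2}{2^{d+2}-5}$ is then a direct consequence of Theorem \ref{thm:Hoelder_continuity}, using that the Schwinger--Dyson equation in Theorem \ref{thm:Gibbs} produces an explicit conjugate system $\xi_j=(D_jV)(X)$ for $X=(X_1,\dots,X_n)$, so in particular $\Phi^\ast(X_1,\dots,X_n)<\infty$.

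For (ii), the H\"older continuity of $\F_\nu$ triggers the self-improvement from Theorem \ref{thm:convergence_in_distribution}: for each $\omega$ in the full-measure event from part (i), the weak convergence $\mu_{Y^{(N)}}(\omega)\to\nu$ automatically upgrades to $\Delta(\mu_{Y^{(N)}}(\omega),\nu)\to 0$, which settles the almost-sure assertion. For the mean empirical measure $\overline{\mu}_{Y^{(N)}}$, the same moment-method strategy applies once the almost-sure convergence of $\tr_N\bigl(P(X^{(N)})^k\bigr)$ is promoted to convergence in expectation for every $k$. I expect this to be the main technical step: under the $(c,\infty)$-convexity assumption on $V$, the Gibbs law $\bP_V^N$ satisfies Brascamp--Lieb and logarithmic Sobolev inequalities that produce sub-Gaussian concentration of $\tr_N\bigl(P(X^{(N)})^k\bigr)$ around its mean with variance of order $N^{-2}$, hence uniform integrability. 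Combined with the almost-sure limit, this gives $\bE\bigl[\tr_N\bigl(P(X^{(N)})^k\bigr)\bigr]\to\tau\bigl(P(X)^k\bigr)$ for every $k$, so the moments of $\overline{\mu}_{Y^{(N)}}$ converge to those of $\mu_Y$, and the moment method yields $\overline{\mu}_{Y^{(N)}}\to\nu$ weakly; a final appeal to Theorem \ref{thm:convergence_in_distribution} converts this into convergence in Kolmogorov distance.
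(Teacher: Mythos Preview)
Your argument for part (i) and for the almost-sure half of part (ii) is essentially the paper's argument, just spelled out in more detail: Theorem~\ref{thm:Gibbs} gives almost-sure convergence of all polynomial moments, the limit $\mu_Y$ is compactly supported and hence moment-determined, and Theorem~\ref{thm:Hoelder_continuity} (via the Schwinger--Dyson conjugate system) supplies the H\"older exponent, after which Theorem~\ref{thm:convergence_in_distribution} upgrades weak convergence to Kolmogorov convergence.

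Where you diverge from the paper is in the treatment of the mean measure $\overline{\mu}_{Y^{(N)}}$. You propose to establish weak convergence of $\overline{\mu}_{Y^{(N)}}$ separately, by promoting almost-sure convergence of $\tr_N\bigl(P(X^{(N)})^k\bigr)$ to $L^1$-convergence through concentration inequalities (Brascamp--Lieb, logarithmic Sobolev) available under $(c,\infty)$-convexity of $V$. This works, but it is a substantial detour. The paper instead exploits a one-line observation: since $\F_{\overline{\mu}_{Y^{(N)}}}(t)=\bE\bigl[\F_{\mu_{Y^{(N)}}}(t)\bigr]$, one has
\[
\Delta\bigl(\overline{\mu}_{Y^{(N)}},\nu\bigr)\;=\;\sup_{t\in\R}\bigl|\bE\bigl[\F_{\mu_{Y^{(N)}}}(t)-\F_\nu(t)\bigr]\bigr|\;\leq\;\bE\bigl[\Delta(\mu_{Y^{(N)}},\nu)\bigr],
\]
and the right-hand side tends to $0$ by dominated convergence, because $\Delta(\mu_{Y^{(N)}},\nu)\to 0$ almost surely and is trivially bounded by $1$. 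This avoids any appeal to concentration of measure or uniform integrability. Your route has the merit of being more robust (it would also yield quantitative moment bounds), but for the bare statement of the corollary the paper's shortcut is both simpler and entirely self-contained.
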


\begin{proof}
Theorem \ref{thm:Gibbs} tells us that $\mu_{Y^{(N)}}$ converges in distribution almost surely as $N\to\infty$ to the analytic distribution $\nu:=\mu_Y$ of $Y:=P(X_1,\dots,X_n)$. Since $X_1,\dots,X_n$ satisfy the Schwinger-Dyson equation with potential $V$, we infer that $\Phi^\ast(X_1,\dots,X_n)<\infty$ with Lipschitz conjugate variables as outlined above. Therefore, with the help of Theorem \ref{thm:Hoelder_continuity}, we see that the cumulative distribution function of $\nu$ is H\"older continuous with exponent $\frac{1}{2^d-1}$.

As a consequence of Theorem \ref{thm:convergence_in_distribution}, we obtain that $\Delta(\mu_{Y^{(N)}},\nu) \to 0$ almost surely and in particular $\Delta(\overline{\mu}_{Y^{(N)}},\nu) \leq \bE[\Delta(\mu_{Y^{(N)}},\nu)] \to 0$ as $N\to \infty$.
\end{proof}

We point out that an analogous statement holds true for certain random matrices of the form \eqref{eq:block_matrix}. For that purpose, we need the following terminology: if $a_1,\dots,a_n\in M_d(\C)$ are selfadjoint matrices, we call
\begin{equation}\label{eq:quantum_operator}
\cL:\ M_d(\C) \to M_d(\C),\quad b\mapsto \sum^n_{j=1} a_j b a_j
\end{equation}
the \emph{quantum operator (associated to $a_1,\dots,a_n$)}; we say that $\cL$ is \emph{semi-flat}, if there is some constant $c>0$ such that $\cL(b) \geq c \tr_d(b) 1_d$ for all positive semidefinite matrices $b\in M_d(\C)$.

In \cite[Theorem 8.1]{MSY18}, it is stated that whenever $a_0,a_1,\dots,a_n\in M_d(\C)$ are selfadjoint matrices such that the quantum operator $\cL: M_d(\C) \to M_d(\C)$ associated to $a_1,\dots,a_n$ is semi-flat and $X_1,\dots,X_n$ are selfadjoint operators in a tracial $W^\ast$-probability space $(\M,\tau)$ satisfying $\Phi^\ast(X_1,\dots,X_n) < \infty$, then $\F_Y$ is H\"older continuous with exponent $\beta=\frac{2}{3}$ for the selfadjoint operator in the tracial $W^\ast$-probability space $(M_d(\C) \otimes \M, \tr_d \otimes \tau)$ given by
$$Y:=a_0 \otimes 1 + \sum^n_{j=1} a_j \otimes X_j.$$
This approach was inspired by \cite{AjEK18,AEK18}, where a very detailed analysis of such operators in the special case for freely independent semicircular operators $X_1,\dots,X_n$ is carried out.

\begin{corollary}\label{cor:random_matrices_block}
In the situation of Theorem \ref{thm:Gibbs}, for every choice of selfadjoint matrices $a_0,a_1,\dots,a_n\in M_d(\C)$ for which the quantum operator $\cL: M_d(\C) \to M_d(\C)$ associated to $a_1,\dots,a_n$ is semi-flat, the following statements hold true:
\begin{enumerate}
 \item The empirical eigenvalue distribution $\mu_{Y^{(N)}}$ of the random matrix $$Y^{(N)} = a_0 \otimes 1_N + \sum^n_{j=1} a_j \otimes X^{(N)}_j$$ converges in distribution almost surely to a compactly supported Borel probability measure $\nu$ on $\R$ whose cumulative distribution function is H\"older continuous with exponent $\frac{2}{3}$.
 \item We have that $$\lim_{N\to\infty} \Delta(\mu_{Y^{(N)}},\nu) = 0 \quad \text{almost surely} \quad\text{and}\quad \lim_{N\to\infty} \Delta(\overline{\mu}_{Y^{(N)}},\nu) = 0.$$
\end{enumerate}
\end{corollary}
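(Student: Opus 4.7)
My plan is to follow the same two-step strategy that worked for Corollary \ref{cor:random_matrices_polynomial}: first establish almost sure convergence in distribution of $\mu_{Y^{(N)}}$ to an explicit limit $\nu$, and then upgrade this to convergence in Kolmogorov distance via the H\"older continuity of $\F_\nu$ together with Theorem \ref{thm:convergence_in_distribution}. The difference is that, because $Y^{(N)}$ is no longer a polynomial in the $X_j^{(N)}$ alone but sits in the larger algebra $M_d(\C) \otimes M_N(\C)$, one has to read Theorem \ref{thm:Gibbs} at the matrix-amplified level; and because the H\"older bound now has to be supplied by the matrix-valued result \cite[Theorem 8.1]{MSY18} rather than by Theorem \ref{thm:Hoelder_continuity}.

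For the first step, I would set $\nu := \mu_Y$, the analytic distribution of the bounded selfadjoint operator
$$Y := a_0 \otimes 1 + \sum^n_{j=1} a_j \otimes X_j$$
in the tracial $W^\ast$-probability space $(M_d(\C) \ootimes \M, \tr_d \otimes \tau)$; note that $\nu$ is automatically compactly supported. Almost sure convergence $\mu_{Y^{(N)}} \to \nu$ in distribution is then proved by the method of moments: for each $k\in\N$, expanding $(Y^{(N)})^k$ and applying $\tr_{dN} = \tr_d \otimes \tr_N$ yields
$$\int_\R t^k\, d\mu_{Y^{(N)}}(t) = \sum_{\vec{i}} \tr_d(a_{i_1}\cdots a_{i_k})\, \tr_N(X_{i_1}^{(N)} \cdots X_{i_k}^{(N)}),$$
where the sum runs over $\vec{i} = (i_1,\dots,i_k) \in \{0,1,\dots,n\}^k$ with the convention $X_0^{(N)} := 1_N$. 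By Theorem \ref{thm:Gibbs}, each factor $\tr_N(X_{i_1}^{(N)} \cdots X_{i_k}^{(N)})$ converges almost surely to $\tau(X_{i_1}\cdots X_{i_k})$. Intersecting the countable family of full-probability events (one per $k$) gives simultaneous almost sure convergence of all moments of $\mu_{Y^{(N)}}$ to those of $\nu$, and since $\nu$ is compactly supported and hence determined by its moments, the classical method of moments produces $\mu_{Y^{(N)}} \to \nu$ almost surely in distribution.

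For the second step, the Schwinger-Dyson equation in Theorem \ref{thm:Gibbs} exhibits $((D_1 V)(X),\dots,(D_n V)(X))$ as a conjugate system for $(X_1,\dots,X_n)$, so $\Phi^\ast(X_1,\dots,X_n) < \infty$. Combined with the semi-flatness of the quantum operator $\cL$, the matrix-valued H\"older continuity result \cite[Theorem 8.1]{MSY18} recalled in the text above gives that $\F_\nu$ is H\"older continuous with exponent $\frac{2}{3}$, and Theorem \ref{thm:convergence_in_distribution} immediately upgrades the almost sure convergence in distribution to almost sure convergence in Kolmogorov distance. Finally, the bound $\Delta(\overline{\mu}_{Y^{(N)}},\nu) \leq \bE[\Delta(\mu_{Y^{(N)}},\nu)]$ -- obtained by applying Jensen's inequality pointwise in $t$ inside the supremum defining $\Delta$ -- together with $\Delta \leq 1$ yields the convergence of the mean distribution by dominated convergence. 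The only subtlety to watch is that the matrices $X_j^{(N)}$ are not a priori uniformly bounded in operator norm under $\bP^N_V$, so one cannot shortcut through a uniform compact support condition for the $\mu_{Y^{(N)}}$; the moment approach above precisely sidesteps this, since Theorem \ref{thm:Gibbs} already delivers almost sure convergence of every joint noncommutative moment and the compact support of the limit $\nu$ supplies the uniqueness required by the method of moments.
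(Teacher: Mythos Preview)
Your proof is correct and follows essentially the same approach as the paper's: identify $\nu = \mu_Y$, invoke $\Phi^\ast(X_1,\dots,X_n)<\infty$ together with \cite[Theorem 8.1]{MSY18} for the H\"older exponent $\frac{2}{3}$, and then upgrade via Theorem \ref{thm:convergence_in_distribution} and the Jensen-type bound $\Delta(\overline{\mu}_{Y^{(N)}},\nu) \leq \bE[\Delta(\mu_{Y^{(N)}},\nu)]$. The only difference is that you spell out the moment expansion needed to pass from Theorem \ref{thm:Gibbs} (stated for scalar polynomial test functions) to the amplified setting $M_d(\C)\otimes M_N(\C)$, whereas the paper simply asserts this step; your explicit method-of-moments argument is exactly the right justification.
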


\begin{proof}
It follows from Theorem \ref{thm:Gibbs} that $\mu_{Y^{(N)}}$ converges in distribution almost surely as $N\to\infty$ to the analytic distribution $\nu:=\mu_Y$ of the operator $Y:=a_0 \otimes 1 + \sum^n_{j=1} a_j \otimes X_j$ living in $(M_d(\C) \otimes \M, \tr_d \otimes \tau)$. Since $X_1\dots,X_n$ satisfy $\Phi^\ast(X_1,\dots,X_n) < \infty$, we can use \cite[Theorem 8.1]{MSY18} which tells us that the cumulative distribution function of $\nu$ is H\"older continuous with exponent $\frac{2}{3}$. The rest is shown like in the proof of Corollary \ref{cor:random_matrices_polynomial}.
\end{proof}

\subsection{Gaussian random matrices and rates of convergence}\label{subsec:GUEs}

A \emph{(standard) selfadjoint Gaussian random matrix} (or \emph{GUE}) of size $N\times N$ is a selfadjoint complex random matrix $X=(X_{kl})_{k,l=1}^N$ in $\M_N$ for which
$$\{X_{kk} \mid 1\leq k \leq n\} \cup \{\Re(X_{kl}) \mid 1\leq k < l \leq N\} \cup \{\Im(X_{kl}) \mid 1\leq k < l \leq N\}$$
are independent real Gaussian random variables such that
$$\bE[X_{kl}] = 0 \quad\text{and}\quad \bE[|X_{kl}|^2] = \frac{1}{N} \quad \text{for $1\leq k \leq l \leq N$}.$$
Those fall into the general class of Gibbs measures considered in the previous section with the particular potential  $V = \frac{1}{2}(x_1^2 + \dots + x_n^2)$.

Our goal is to strengthen Corollaries \ref{cor:random_matrices_polynomial} and \ref{cor:random_matrices_block} in the GUE case by proving explicit rates for the Kolmogorov distance. This improvement depends crucially on the results of \cite{HT05} about random matrices of the form \eqref{eq:block_matrix}, which we are going to recall now.

Note that each random matrix like in \eqref{eq:block_matrix} is an element in $\M_{dN} \cong M_d(\C) \otimes \M_N$. For each $X = X^\ast \in M_d(\C) \otimes \M_N$, we define its \emph{matrix-valued Cauchy transform} by
$$\bG_X:\ \bH^+(M_d(\C)) \to \bH^-(M_d(\C)), \quad b \mapsto (\id_{M_d(\C)} \otimes \tau_N)\big((b \otimes 1_N - X)^{-1}\big),$$
where $\bH^+(M_d(\C))$ and $\bH^-(M_d(\C))$ denote the upper and lower half-plane in $M_d(\C)$, respectively, that is, the set of all $b\in M_d(\C)$ with positive and negative imaginary part $\Im(b) := \frac{1}{2i}(b-b^\ast)$, respectively. Note that $G_{\overline{\mu}_{X}}(z) = \tr_d(\bG_{X}(z 1_d))$ for all $z\in\C^+$. 

The limit of those random matrices will be described accordingly by some selfadjoint operator in the tracial $W^\ast$-probability space $(M_d(\C) \otimes \M, \tr_d \otimes \tau)$. Note that $(M_d(\C) \otimes \M, \tr_d \otimes \tau)$ is again a tracial $W^\ast$-probability space, which can further be regarded as an operator-valued probability spaces over $M_d(\C)$ with the conditional expectation that is given by $\id_{M_d(\C)} \otimes \tau$. Accordingly, we can consider the matrix-valued Cauchy transform of any $X = X^\ast \in M_d(\C) \otimes \M$; it is defined by
$$\bG_X:\ \bH^+(M_d(\C)) \to \bH^-(M_d(\C)), \quad b \mapsto (\id_{M_d(\C)} \otimes \tau)\big((b \otimes 1 - X)^{-1}\big).$$

Now, we can formulate the precise convergence result, which is \cite[Theorem 5.7]{HT05}.

\begin{theorem}\label{thm:HT}
Let $a_0,a_1,\dots,a_n \in M_d(\C)$ be selfadjoint matrices. We consider, for each $N\in\N$, a tuple $(X^{(N)}_1,\dots,X^{(N)}_n)$ of $n$ independent GUEs. Let further $(S_1,\dots,S_n)$ be a tuple of freely independent semicircular elements in some tracial $W^\ast$-probability space $(\M,\tau)$. Consider
$$X^{(N)} := a_0 \otimes 1_N + \sum^n_{j=1} a_j \otimes X_j^{(N)} \qquad\text{and}\qquad S := a_0 \otimes 1 + \sum^n_{j=1} a_j \otimes S_j.$$
Then the matrix-valued Cauchy transforms $\bG_{X^{(N)}}, \bG_S: \bH^+(M_d(\C)) \to \bH^-(M_d(\C))$ satisfy
$$\|\bG_{X^{(N)}}(b) - \bG_S(b)\| \leq \frac{4C}{N^2} (K + \|b\|)^2 \|\Im(b)^{-1}\|^7$$
for all $b\in\bH^+(M_d(\C))$, with the constants $C>0$ and $K>0$ that are given by
$$C = d^3 \bigg\|\sum^n_{j=1} a_j^2\bigg\|^2 \qquad\text{and}\qquad K = \|a_0\| + 4 \sum^n_{j=1} \|a_j\|.$$
\end{theorem}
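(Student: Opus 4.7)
The plan is to compare both matrix-valued Cauchy transforms to one and the same $M_d(\C)$-valued ``master equation'' and then translate an equation-level error bound into a pointwise bound. Introduce the completely positive map $\eta\colon M_d(\C)\to M_d(\C)$ defined by $\eta(c):=\sum_{j=1}^n a_j c a_j$. From operator-valued free probability (or a direct Schwinger--Dyson computation using freeness of $S_1,\dots,S_n$ amalgamated over $M_d(\C)$), $\bG_S$ is the unique solution in $\bH^-(M_d(\C))$ of
$$\bG_S(b)\bigl(b - a_0 - \eta(\bG_S(b))\bigr) = 1_d.$$
The first step is to establish this identity and, more importantly, its \emph{Lipschitz invertibility}: a small perturbation of the right-hand side produces a proportional perturbation of the solution, quantified by a power of $\|\Im(b)^{-1}\|$. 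This step is purely deterministic.

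The second step is the random-matrix input. Set $R_N(b):=(b\otimes 1_N - X^{(N)})^{-1}\in M_d(\C)\otimes\M_N$, so that $\bG_{X^{(N)}}(b)=(\id_{M_d}\otimes\tau_N)(R_N(b))$. Apply Gaussian integration by parts (Stein's identity) entrywise to each independent GUE $X_j^{(N)}$, expanding derivatives via the resolvent identity $\partial R_N = R_N(\partial X^{(N)})R_N$. After collecting the leading Wick contractions, which exactly reproduce the term $\eta(\bG_{X^{(N)}}(b))$, I obtain an \emph{approximate master equation}
$$\bG_{X^{(N)}}(b)\bigl(b - a_0 - \eta(\bG_{X^{(N)}}(b))\bigr) = 1_d + E_N(b),$$
where $E_N(b)=\sum_{j=1}^n a_j\,(\id\otimes\tau_N)\bigl(\mathrm{Cov}(R_N, a_j R_N)\bigr)$ is a genuine second-order covariance term.

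The main obstacle is showing $\|E_N(b)\|=O(1/N^2)$ rather than the naive $O(1/N)$; this improved rate is the true content of Haagerup--Thorbj\o rnsen. I would use the Poincar\'e inequality for the Gaussian ensemble together with the Lipschitz dependence of $R_N$ on $X^{(N)}$, whose Lipschitz constant is governed by $\|\Im(b)^{-1}\|^2$. Since each entry of $X_j^{(N)}$ has variance $1/N$, applying the normalised trace to the quadratic covariance produces one further factor of $1/N$, yielding the desired $1/N^2$. Polynomial factors in $\|b\|$ and in the $\|a_j\|$ are then tracked through the resolvent bounds and the definition of $\eta$, producing an estimate of the form $\|E_N(b)\|\lesssim d^3\bigl\|\sum_j a_j^2\bigr\|^2 (K+\|b\|)^2 \|\Im(b)^{-1}\|^{7}/N^2$ with $K$ as in the statement.

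Finally, subtracting the two master equations and applying the Lipschitz inversion from the first step converts the equation-level error $E_N(b)$ into the asserted pointwise bound on $\|\bG_{X^{(N)}}(b)-\bG_S(b)\|$, with the same dependence on $K+\|b\|$ and on $\|\Im(b)^{-1}\|$. The most delicate bookkeeping, and the step where I expect technical difficulty, is matching the exact power $\|\Im(b)^{-1}\|^7$: this requires balancing the resolvent bounds used in the Lipschitz inversion against the powers already consumed in the variance estimate for $E_N(b)$, and is precisely the quantitative refinement carried out in \cite{HT05}.
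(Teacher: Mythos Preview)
The paper does not prove this theorem at all: it is quoted verbatim as \cite[Theorem 5.7]{HT05} and used as a black box. There is therefore no ``paper's own proof'' to compare your proposal against.

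For what it is worth, your outline is a faithful sketch of the Haagerup--Thorbj\o rnsen argument itself: the fixed-point (master) equation for $\bG_S$, Gaussian integration by parts producing an approximate master equation for $\bG_{X^{(N)}}$ with a covariance remainder, the Gaussian Poincar\'e inequality to upgrade the remainder to $O(1/N^2)$, and a stability/Lipschitz step to pass from the equation-level error to the Cauchy-transform bound. You are also right that the exponent $7$ on $\|\Im(b)^{-1}\|$ and the specific constants $C,K$ arise only after careful bookkeeping in \cite{HT05}; your sketch does not actually carry this out, so as written it is a plan rather than a proof. But since the present paper merely cites the result, that is exactly the level at which it is used here.
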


Accordingly (see \cite[Lemma 6.1]{HT05}), the Cauchy transforms $G_{\overline{\mu}_{X^{(N)}}}$ and $G_{\mu_S}$, which are related to the respective matrix-valued Cauchy transforms by $G_{\overline{\mu}_{X^{(N)}}}(z) = \tr_d(\bG_{X^{(N)}}(z 1_d))$ and $G_{\mu_S}(z) = \tr_d(\bG_S(z 1_d))$ for every $z\in\C^+$, satisfy
\begin{equation}\label{eq:HT}
|G_{\overline{\mu}_{X^{(N)}}}(z) - G_{\mu_S}(z)| \leq \frac{4C}{N^2} \frac{(K + |z|)^2}{\Im(z)^7}.
\end{equation}

Putting these facts together, we conclude now the following.

\begin{corollary}\label{cor:block-GUE}
Let $a_0,a_1,\dots,a_n \in M_d(\C)$ be selfadjoint such that the quantum operator $\cL: M_d(\C) \to M_d(\C)$ associated to $a_1,\dots,a_n$ by \eqref{eq:quantum_operator} is semi-flat.
For each $N\in\N$, let $(X^{(N)}_1,\dots,X^{(N)}_n)$ be a tuple of $n$ independent GUEs. Further, let $(S_1,\dots,S_n)$ be a tuple of freely independent semicircular elements in some tracial $W^\ast$-probability space $(\M,\tau)$. Set
$$X^{(N)} := a_0 \otimes 1_N + \sum^n_{j=1} a_j \otimes X_j^{(N)} \qquad\text{and}\qquad S := a_0 \otimes 1 + \sum^n_{j=1} a_j \otimes S_j.$$
Then the averaged empirical eigenvalue distribution $\overline{\mu}_{X^{(N)}}$ of $X^{(N)}$ satisfies
$$\Delta(\overline{\mu}_{X^{(N)}},\mu_S) \leq D N^{-4/35}.$$
\end{corollary}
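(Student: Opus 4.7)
The plan is to apply Theorem \ref{thm:Hoelder_criterion} directly, reading off its input data from the results already at our disposal. I would first identify the target measure as $\nu := \mu_S$ and verify hypothesis \ref{it:cond-1}. Since $(S_1,\dots,S_n)$ is a tuple of freely independent standard semicirculars, we have $\Phi^\ast(S_1,\dots,S_n) = n < \infty$; together with the semi-flatness of $\cL$, \cite[Theorem 8.1]{MSY18} (already invoked in the proof of Corollary \ref{cor:random_matrices_block}) gives that $\F_{\mu_S}$ is H\"older continuous with exponent $\beta = \tfrac{2}{3}$.

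Next, I would feed the Haagerup--Thorbj{\o}rnsen bound \eqref{eq:HT} into hypothesis \ref{it:cond-2}. Setting $\epsilon_N := N^{-2}$, $\Theta(z) := 4C(K+|z|)^2/\Im(z)^7$ on $\rho = \infty$, and $\Theta_0(r) := 4C(K+r)^2$, one reads off $k = 7$ and $l = 2$; the growth condition $\limsup_{R\to\infty} R^{-l}\max_{r\in[0,R]} \Theta_0(r) < \infty$ is then immediate. For hypothesis \ref{it:cond-3}, a direct second-moment computation is needed. Expanding $(X^{(N)})^2$ and using that the GUE entries have mean zero and that $\tau_N(X_j^{(N)} X_k^{(N)}) = \delta_{j,k}$, one finds
$$\int_\R t^2\, d\overline{\mu}_{X^{(N)}}(t) = (\tr_d \otimes \tau_N)\big((X^{(N)})^2\big) = \tr_d(a_0^2) + \sum_{j=1}^n \tr_d(a_j^2),$$
which is manifestly independent of $N$.

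With all three hypotheses in place, Theorem \ref{thm:Hoelder_criterion} applies and yields the exponent $\beta/(2 + k + (2-\beta)l)$. The only remaining step is arithmetic: the denominator equals $2 + 7 + (2-\tfrac{2}{3}) \cdot 2 = 9 + \tfrac{8}{3} = \tfrac{35}{3}$, so the exponent becomes $(\tfrac{2}{3})/(\tfrac{35}{3}) = \tfrac{2}{35}$, and consequently $\epsilon_N^{2/35} = N^{-4/35}$. This matches the claimed rate and completes the argument. There is essentially no obstacle beyond bookkeeping: the H\"older regularity of $\mu_S$ and the $1/N^2$ Cauchy-transform convergence are available as black boxes, so the content of the proof is really the verification that the quantitative self-improvement machinery of Section \ref{sec:Kolmogorov} applies with the correct exponents.
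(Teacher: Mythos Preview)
Your proposal is correct and follows essentially the same route as the paper's proof: both apply Theorem~\ref{thm:Hoelder_criterion} with $\beta=\tfrac{2}{3}$ from \cite[Theorem~8.1]{MSY18}, $k=7$ and $l=2$ from the Haagerup--Thorbj{\o}rnsen bound~\eqref{eq:HT} with $\epsilon_N=N^{-2}$, and a direct second-moment computation for condition~\ref{it:cond-3}. Your second-moment formula $\tr_d(a_0^2)+\sum_j \tr_d(a_j^2)$ is in fact slightly more accurate than the paper's stated $\tr_d(\cL(1_d))$, which omits the $a_0$ contribution, but in either case the key point---that the second moment is independent of $N$---holds.
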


\begin{proof}
We want to apply Theorem \ref{thm:Hoelder_criterion}. Therefore, we check that $\mu_N := \overline{\mu}_{X^{(N)}}$ and $\nu := \mu _S$ have the required properties:
\begin{itemize}
 \item Since $\cL$ is semi-flat, the cumulative distribution function of $\mu_S$ is H\"older continuous with exponent $\beta=\frac{2}{3}$, as it follows from \cite[Theorem 8.1]{MSY18}. 
 \item Let us define $\epsilon_N := N^{-2}$. Then, due to \eqref{eq:HT}, we have that
$$|G_{\overline{\mu}_{X^{(N)}}}(z) - G_{\mu_S}(z)| \leq \Theta(z) \epsilon_N \qquad\text{for all $z\in\C^+$}$$
with a continuous function $\Theta: \C^+ \to [0,\infty)$ that satisfies the growth condition $\Theta(z) \leq \frac{\Theta_0(|z|)}{\Im(z)^7}$ on $\bS_\infty = \C^+$ with the continuous function $\Theta_0: [0,\infty) \to [0,\infty)$ that is given by $\Theta_0(r) := (K+r)^2$; the latter satisfies $\lim_{R\to\infty} R^{-2} \max_{r\in[0,R]} \Theta_0(r) = 1$. 
 \item For each $N\in\N$, the measure $\overline{\mu}_{X^{(N)}}$ satisfies $$\int_\R t^2\, d\overline{\mu}_{X^{(N)}}(t) = \bE\big[(\tr_d\otimes\tr_N)\big( (X^{(N)})^2 \big)\big] = \tr_d(\cL(1_d)).$$ 
\end{itemize}
Therefore, Theorem \ref{thm:Hoelder_criterion} guarantees the existence of some numerical constant $D>0$ for which $\Delta(\overline{\mu}_{X^{(N)}},\mu_S) \leq D N^{-4/35}$ holds, as claimed.
\end{proof}

\begin{remark}
In the proof of Theorem \ref{thm:Hoelder_criterion}, on which the previous corollary relies substantially, the behavior of the cumulative distribution functions near $\infty$ was controlled with the help of Chebyshev's inequality. For the sake of completeness, we note that in the case of the mean empirical eigenvalue distribution $\overline{\mu}_{X^{(N)}}$ of $X^{(N)}$ much stronger statements are possible -- although this does not improve the conclusion of Theorem \ref{thm:Hoelder_criterion}. More precisely, we have
\begin{equation}\label{eq:cumulative_decay}
1-\F_{\overline{\mu}_{X^{(N)}}}(2+t) \leq  2N \exp\Big(-\frac{N t^2}{2}\Big) \quad\text{and}\quad \F_{\overline{\mu}_{X^{(N)}}}(2-t) \leq 2N \exp\Big(-\frac{N t^2}{2}\Big).
\end{equation}
This follows from \cite[Proof of Lemma 3.3]{HT03}, \cite[Proof of Lemma 6.4]{S05}, and \cite[Proof of Proposition 6.4]{HST06}.
\end{remark}

With the help of linearization techniques that we outline in Section \ref{sec:linearization} of the appendix, we can give rates for the Kolmogorov distance also in the case of polynomial evaluations. 

\begin{corollary}\label{cor:p-GUE}
Let $P\in\C\langle x_1,\dots,x_n\rangle$ be a selfadjoint noncommutative polynomial of degree $d\geq 1$. For each $N\in\N$, we consider a tuple $X^{(N)} = (X^{(N)}_1,\dots,X^{(N)}_n)$ of $n$ independent GUEs. Further, let $S=(S_1,\dots,S_n)$ be a tuple of freely independent semicircular elements in some tracial $W^\ast$-probability space $(\M,\tau)$. We define
$$Y^{(N)} := P(X^{(N)}_1,\dots,X^{(N)}_n) \qquad\text{and}\qquad Y := P(S_1,\dots,S_n).$$
Then there is a constant $D>0$ such that for all $N\in\N$
$$\Delta(\overline{\mu}_{Y^{(N)}},\mu_Y) \leq D N^{-\frac{1}{13 \cdot 2^{d+2} - 60}}.$$
\end{corollary}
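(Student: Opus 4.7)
The plan is to combine the linearization construction of Section \ref{sec:linearization} with Theorems \ref{thm:HT}, \ref{thm:Hoelder_continuity}, and \ref{thm:Hoelder_criterion}. Since $\Phi^\ast(S_1,\dots,S_n)=n<\infty$, Theorem \ref{thm:Hoelder_continuity} applied to $Y=p(S_1,\dots,S_n)$ shows that $\F_{\mu_Y}$ is H\"older continuous with exponent $\beta:=\frac{2}{2^{d+2}-5}$, verifying hypothesis (i) of Theorem \ref{thm:Hoelder_criterion}. Hypothesis (iii) is immediate because $\int t^2\,d\overline{\mu}_{Y^{(N)}}(t)=\bE[\tr_N((Y^{(N)})^2)]$ is a polynomial expression in the joint GUE moments, which remain uniformly bounded in $N$. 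The proof thus reduces to producing the pointwise Cauchy transform estimate required by hypothesis (ii).

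For that estimate I would linearize $p$ via the construction of Section \ref{sec:linearization}: it produces an integer $m$ depending only on $d$, self-adjoint matrices $a_0,\dots,a_n\in M_m(\C)$, and, for each $\epsilon>0$, the element $\Lambda_\epsilon(z):=z\,E_{1,1}+i\epsilon\,I_m\in\bH^+(M_m(\C))$ such that the $(1,1)$-entry of $\bG_{L(X)}(\Lambda_\epsilon(z))$ recovers $G_{p(X)}(z)$ up to an approximation error of order $\epsilon\,\Im(z)^{-2}$, where $L(X):=a_0\otimes 1+\sum_{j=1}^n a_j\otimes X_j$. Since $\|\Im\Lambda_\epsilon(z)^{-1}\|=1/\epsilon$, applying Theorem \ref{thm:HT} to $L$ with $X=X^{(N)}$ and $X=S$, reading off the $(1,1)$-entry, and adding the two approximation errors yields
$$|G_{\overline{\mu}_{Y^{(N)}}}(z)-G_{\mu_Y}(z)|\leq\frac{C_1(K+|z|)^2}{N^2\epsilon^7}+\frac{C_2\,\epsilon}{\Im(z)^2}.$$
Restricting to a strip $\bS_\rho$ with $\rho<\infty$ and making the $z$-dependent choice $\epsilon:=\Im(z)\cdot N^{-1/4}$ converts the right-hand side into $\Theta(z)\cdot N^{-1/4}$, where the dominant term is $C\,(K+|z|)^2/(\Im(z)^7\,N^{1/4})$ and the subdominant one, $C'/(\Im(z)\,N^{1/4})$, is absorbed using $\Im(z)\leq\rho$; hence hypothesis (ii) of Theorem \ref{thm:Hoelder_criterion} holds with $\epsilon_N=N^{-1/4}$, exponent $k=7$, and $\Theta_0(r):=C(K+r)^2$ of degree $l=2$.

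Plugging $\beta=\frac{2}{2^{d+2}-5}$, $\epsilon_N=N^{-1/4}$, $k=7$, and $l=2$ into the conclusion of Theorem \ref{thm:Hoelder_criterion} gives
$$\Delta\bigl(\overline{\mu}_{Y^{(N)}},\mu_Y\bigr)\leq D\,N^{-\frac{1}{4}\cdot\frac{\beta}{2+k+(2-\beta)l}}=D\,N^{-\frac{\beta}{4(13-2\beta)}}=D\,N^{-\frac{1}{13\cdot 2^{d+3}-138}},$$
the last equality following from substituting $\beta$ and using the identity $26\cdot 2^{d+2}=13\cdot 2^{d+3}$. The main obstacle is the descent step: engineering the linearization of Section \ref{sec:linearization} so that the approximation error between $\bG_{L(X)}(\Lambda_\epsilon(z))_{1,1}$ and $G_{p(X)}(z)$ is genuinely of order $\epsilon\,\Im(z)^{-2}$ uniformly in $z$ and in $X\in\{S,X^{(N)}\}$, and justifying the $z$-dependent choice of $\epsilon$ within the framework of Theorem \ref{thm:Hoelder_criterion}, which fixes a uniform $\epsilon_N$ but permits $\Theta(z)$ to carry all of the $z$-dependence. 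Once this descent is set up, the final rate falls out of the algebraic identity above.
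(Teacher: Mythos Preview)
Your proposal is correct and follows essentially the same route as the paper: Theorem~\ref{thm:Hoelder_continuity} for the H\"older exponent $\beta=\frac{2}{2^{d+2}-5}$, the linearization of Section~\ref{sec:linearization} (specifically Theorem~\ref{thm:approximation_improved}) for the $\epsilon\,\Im(z)^{-2}$ approximation error, Theorem~\ref{thm:HT} for the $N^{-2}\epsilon^{-7}$ matrix-valued Cauchy transform bound, the $z$-dependent choice $\epsilon=\Im(z)\,N^{-1/4}$, and finally Theorem~\ref{thm:Hoelder_criterion} with $k=7$, $l=2$, $\epsilon_N=N^{-1/4}$. The only deviations are cosmetic: the paper's $\Lambda_\epsilon(z)$ has $z$ (not $z+i\epsilon$) in the $(1,1)$ slot, and the uniformity of the $C_2$-constant over $X\in\{S,X^{(N)}\}$ is justified in the paper via the convergence $\sum_j\bE[\tr_N(p_j(X^{(N)})^\ast p_j(X^{(N)}))]\to\sum_j\|p_j(S)\|_2^2$, which you implicitly use but should make explicit.
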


For the particular case $P(x)=x$ of a GUE matrix, the rate of convergence to the semicircular distribution with respect to the Kolmogorov distance was studied by G\"otze and Tikhomirov in \cite{GT02} and then in \cite{GT05} where they obtain the optimal rate, conjectured by Bai \cite{Bai93a} for the more general Wigner matrices. Even for $d=1$ or $n=1$, our result still covers a larger class of matrices than a single GUE.

\begin{proof}[Proof of Corollary \ref{cor:p-GUE}]
The assertion will follow from Theorem \ref{thm:Hoelder_criterion}. Note that the convergence in distribution of $(\overline{\mu}_{Y^{(N)}})_{N=1}^\infty$ to $\mu_Y$ can be taken for granted as by the results of \cite{Voi91} on asymptotic freeness, the tuple $X^{(N)}$ is known to converge in distribution to $S$ as $N\to \infty$.

First of all, Theorem \ref{thm:Hoelder_continuity} yields that the cumulative distribution function of the analytic distribution of $Y$ is H\"older continuous with exponent $\frac{1}{2^d-1}$.

In order to verify condition \ref{it:cond-2} of Theorem \ref{thm:Hoelder_criterion}, we choose a selfadjoint linear representation $\rho=(u,Q,v)$ of $P$ and we consider the associated selfadjoint linearization $\hat{P}$.

For a moment, let us fix $z\in\C^+$ and $N\in\N$; we define $\epsilon>0$ by $\epsilon := N^{-1/4} \Im(z)$.
Since in particular $\epsilon \leq \Im(z) \leq |z|$, we see that $\|\Lambda_\epsilon(z)\| = |z|$ and $\|\Im(\Lambda_\epsilon(z))^{-1}\| = \frac{1}{\epsilon} = N^{1/4} \frac{1}{\Im(z)}$. Thus, involving Theorem \ref{thm:HT}, we get that
$$\|\bG_{\hat{P}(X^{(N)})}(\Lambda_\epsilon(z)) - \bG_{\hat{P}(S)}(\Lambda_\epsilon(z))\| \leq 4C N^{-1/4} \frac{(K + |z|)^2}{\Im(z)^7}.$$
Furthermore, by Theorem \ref{thm:approximation_improved}, we find noncommutative polynomials $P=P_1,P_2,\dots,P_d\in\C\langle x_1,\dots,x_n\rangle$ such that
\begin{align*}
\big| G_{P(X^{(N)})}(z) - \big[ \bG_{\hat{P}(X^{(N)})}(\Lambda_\epsilon(z)) \big]_{1,1} \big| &\leq N^{-1/4} \frac{2C'}{\Im(z)},\\
\big| G_{P(X)}(z) - \big[ \bG_{\hat{P}(X)}(\Lambda_\epsilon(z)) \big]_{1,1} \big| &\leq N^{-1/4} \frac{2C'}{\Im(z)},
\end{align*}
with the constant 
$$C' := \sum^d_{j=1} \|P_j(S)\|_2^2 + \sup_{N\in\N} \sum^d_{j=1} \bE\big[\tr_N\big(P_j(X^{(N)})^\ast P_j(X^{(N)})\big)\big];$$
note that $C'$ is finite because $X^{(N)}$ converges in distribution to $S$ as $N\to \infty$ and therefore
$$\lim_{N\to\infty} \sum^d_{j=1} \bE\big[\tr_N\big(P_j(X^{(N)})^\ast P_j(X^{(N)})\big)\big] = \sum^d_{j=1} \|P_j(S)\|_2^2.$$
Since $|[A]_{1,1}| \leq \|A\|$ for every matrix $A\in M_N(\C)$, we obtain by putting these pieces together that
\begin{equation}\label{eq:HST_variant}
|G_{\overline{\mu}_{Y^{(N)}}}(z) - G_{\mu_Y}(z)| \leq N^{-1/4} \Big(\frac{2C'}{\Im(z)} + 4C \frac{(K + |z|)^2}{\Im(z)^7}\Big)
\end{equation}

Thus, in summary, we see that with the continuous function
$$\Theta:\ \C^+ \to [0,\infty),\quad z\mapsto \frac{2C'}{\Im(z)} + 4C \frac{(K + |z|)^2}{\Im(z)^7}$$
we have for all $z\in\C^+$ and for all $N\in\N$ that
$$|G_{\overline{\mu}_{Y^{(N)}}}(z) - G_{\mu_Y}(z)| \leq \Theta(z) N^{-1/4}.$$
Taking now a closer look at $\Theta$, we see that it can be bounded on the strip $\bS_1$ as
$$\Theta(z) \leq \frac{\Theta_0(|z|)}{\Im(z)^7} \qquad\text{for all $z\in\bS_1$},$$
where the function $\Theta_0: [0,\infty) \to [0,\infty)$ is defined by $\Theta_0(r) := 2 C' + (K + r)^2$ and thus satisfies the growth condition $\limsup_{R\to\infty} R^{-2} \max_{r\in[0,R]} \Theta_0(r) = 1$.
This means that condition \ref{it:cond-1} of Theorem \ref{thm:Hoelder_criterion} is fulfilled with $l=2$, $k=7$, and the sequence $(\epsilon_N)_{N=1}^\infty$ defined by $\epsilon_N := N^{-1/4}$.

It remains to check condition \ref{it:cond-3} of Theorem \ref{thm:Hoelder_criterion}. This, however, is clear as
$$\lim_{N\to\infty} \int_\R t^2\, d\overline{\mu}_{Y^{(N)}} = \lim_{N\to\infty} \bE\big[\tr_N(P(X^{(N)})^2)\big] = \tau\big(P(S)^2\big),$$
since $X^{(N)}$ converges in distribution to $S$ as $N\to \infty$.

Thus, Theorem \ref{thm:Hoelder_criterion} guarantees the existence of a constant $D>0$ such that
$$\Delta(\overline{\mu}_{Y^{(N)}},\mu_Y) \leq D N^{-\frac{1}{13 \cdot 2^{d+2} - 60}} \qquad\text{for all $N\in\N$},$$
which proves the assertion.
\end{proof}

We point out that \eqref{eq:HST_variant} is a slightly improved variant of the related inequality (6.3) in \cite{HST06}. While the latter required an elaborate generalization of Theorem \ref{thm:HT} from \cite{HT05}, we can work with Theorem \ref{thm:HT} directly; this simplification is possible thanks to universal Theorem \ref{thm:approximation_improved} which we present in the appendix.

\appendix

\section{Approximation of Cauchy transforms by linearizations}\label{sec:linearization}

Linearization techniques have turned out to be very useful when dealing with evaluations of noncommutative polynomials or noncommutative rational functions; see, for instance, \cite{BMS17,HMS18} and the references collected therein.

Here, we focus on the case of scalar-valued noncommutative polynomials $P\in\C\langle x_1,\dots,x_n\rangle$. We can associate to $P$ by purely algebraic techniques a \emph{linear representation} $\rho=(u,Q,v)$, i.e., a triple that consists of a row vector $u$ and a column vector $v$, both of the size, say $d\in\N$, and an invertible matrix $Q \in M_d(\C\langle x_1,\dots,x_n\rangle)$ of the form
$$Q = Q_0 + Q_1 x_1 + \dots + Q_n x_n$$
with scalar matrices $Q_0,Q_1,\dots,Q_n\in M_d(\C)$ which enjoys the crucial property that $P = -u Q^{-1} v$. Moreover, if $P$ is selfadjoint, we may find a particular linear representation $\rho$ which is additionally \emph{selfadjoint} in the sense that $v=u^\ast$ holds and all matrices $Q_0,Q_1,\dots,Q_n$ are selfadjoint. The matrix-valued but linear polynomial
\begin{equation}\label{eq:linearization}
\hat{P} := \begin{bmatrix} 0 & u\\ v & Q\end{bmatrix} \in M_{d+1}(\C\langle x_1,\dots,x_n\rangle)
\end{equation}
is called the \emph{linearization of $P$ associated to $\rho$}; note that $\hat{P}$ is selfadjoint if and only if $\rho$ is selfadjoint.

Suppose now that $(\M,\tau)$ is a tracial $W^\ast$-probability space and let $X=(X_1,\dots,X_n)$ be any $n$-tuple of selfadjoint operators in $\M$. Further, let $\rho=(u,Q,v)$ be a selfadjoint linear representation of $P=P^\ast \in\C\langle x_1,\dots,x_n\rangle$ and let $\hat{P} \in M_{d+1}(\C\langle x_1,\dots,x_n\rangle)$ be the associated linearization.
With the help of the well-known Schur complement formula, one easily sees that the scalar-valued Cauchy transform of $P(X) = -u Q(X)^{-1} v$ can be obtained from the matrix-valued Cauchy transform of the selfadjoint operator $\hat{P}(X) \in M_{d+1}(\M)$; in fact, we have for every point $z\in\C^+$ that
\begin{equation}\label{eq:Cauchy-linearization}
G_{P(X)}(z) = \lim_{\epsilon \searrow 0} \big[\bG_{\hat{P}(X)}(\Lambda_\epsilon(z))\big]_{1,1},
\end{equation}
where $[A]_{1,1}:= A_{11}$ for any matrix $A$ with entries $A_{ij}$ and $\Lambda_\epsilon(z)$ is a matrix in $\bH^+(M_{d+1}(\C))$ that is given by
$$\Lambda_\epsilon(z) := \begin{bmatrix} z & 0 & \hdots & 0\\ 0 & i\epsilon & \hdots & 0\\ \vdots & \vdots & \ddots & \vdots\\ 0 & 0 & \hdots & i\epsilon\end{bmatrix}.$$
Notably, when the operators $X_1,\dots,X_n$ are freely independent, one can compute $\bG_{\hat{P}(X)}$ efficiently at any point in $\bH^+(M_{d+1}(\C))$ out of the individual analytic distributions of $X_1,\dots,X_n$ by means of operator-valued free probability theory; see \cite{BMS17}.

Our goal is the following quantitative version of \eqref{eq:Cauchy-linearization}.

\begin{theorem}\label{thm:approximation_improved}
Let $P\in\C\langle x_1,\dots,x_n\rangle$ be a selfadjoint noncommutative polynomial. Consider the selfadjoint linearization $\hat{P} \in M_{d+1}(\C\langle x_1,\dots,x_n\rangle)$ of $P$ associated to a given selfadjoint linear representation $\rho=(u,Q,v)$ of $P$ with $u\neq 0$. Then there are (not necessarily selfadjoint) polynomials $P_1,\dots,P_d\in\C\langle x_1,\dots,x_n\rangle$, where $P_1$ can be chosen to be $P$, such that the following statements hold true:
\begin{enumerate}
 \item\label{it:approximation_improved} If $X=(X_1,\dots,X_n)$ is a tuple of selfadjoint operators in any tracial $W^\ast$-probability space $(\M,\tau)$, then for all $z\in\C^+$ and all $\epsilon>0$
\begin{equation}\label{eq:approximation_improved}
\big| G_{P(X)}(z) - \big[ \bG_{\hat{P}(X)}(\Lambda_\epsilon(z)) \big]_{1,1} \big| \leq \frac{2\epsilon}{\Im(z)^2} \sum^d_{j=1} \|P_j(X)\|_2^2 .
\end{equation}
 \item\label{it:approximation_improved_random_matrices} If $X^{(N)} = (X^{(N)}_1,\dots,X_n^{(N)})$ is a tuple of selfadjoint random matrices belonging to $M_N(L^{\infty -}(\Omega,\bP))$ for any classical probability space $(\Omega,\F,\bP)$ and arbitrary $N\in\N$, then for all $z\in\C^+$ and all $\epsilon>0$
\begin{equation}\label{eq:approximation_improved_random_matrices}
\big| G_{P(X^{(N)})}(z) - \big[ \bG_{\hat{P}(X^{(N)})}(\Lambda_\epsilon(z)) \big]_{1,1} \big| \leq \frac{2\epsilon}{\Im(z)^2} \sum^d_{j=1} \bE\big[\tr_N\big(P_j(X^{(N)})^\ast P_j(X^{(N)})\big)\big].
\end{equation}
\end{enumerate}
\end{theorem}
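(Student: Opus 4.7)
The plan is to combine the $(1,1)$-entry Schur complement identity for $\bG_{\hat{p}(X)}(\Lambda_\epsilon(z))$ with a resolvent identity, and then to exploit the algebraic structure of the linearization to produce the polynomials $p_j$. Writing $\Lambda_\epsilon(z) \otimes 1 - \hat{p}(X)$ as a $2\times 2$ block matrix and using selfadjointness of $Q(X)$ (which ensures $i\epsilon - Q(X)$ is invertible with $\|(i\epsilon - Q(X))^{-1}\| \leq 1/\epsilon$), the Schur complement formula yields
\[
[\bG_{\hat{p}(X)}(\Lambda_\epsilon(z))]_{1,1} = \tau\bigl((z - T_\epsilon)^{-1}\bigr), \qquad T_\epsilon := u(i\epsilon - Q(X))^{-1} v.
\]
Using $v = u^\ast$ and selfadjointness of $Q(X)$, a direct calculation gives $\Im(T_\epsilon) \leq 0$, hence $\|(z - T_\epsilon)^{-1}\| \leq 1/\Im(z)$, and the standard resolvent identity yields
\[
G_{p(X)}(z) - \tau\bigl((z - T_\epsilon)^{-1}\bigr) = \tau\bigl((z - p(X))^{-1}(p(X) - T_\epsilon)(z - T_\epsilon)^{-1}\bigr).
\]

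The main obstacle is the algebraic step: producing polynomials $p_1 = p, p_2, \dots, p_d \in \C\langle x_1, \dots, x_n\rangle$ (depending on $\rho$) whose evaluations form a vector $\eta^0 := (p_1(X), \dots, p_d(X))^T \in \M^d$ satisfying $Q(X)\eta^0 = -v$ (and hence $u\eta^0 = p(X)$, consistent with $p(X) = -uQ(X)^{-1}v$). Such polynomials are available for the standard selfadjoint linearizations produced by the Sch\"utzenberger/Anderson-type algorithms employed in \cite{HMS18,BMS17}, where the algorithm is built so that $-Q^{-1}v$ acquires polynomial entries, and $p_1 = p$ can be arranged by an appropriate enlargement of the representation if necessary. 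Granting this, and setting $\eta^\epsilon := (i\epsilon - Q(X))^{-1}v$ (so $T_\epsilon = u\eta^\epsilon$), the identities $Q\eta^0 = -v$ and $(i\epsilon - Q)\eta^\epsilon = v$ combine to give $(i\epsilon - Q)(\eta^0 - \eta^\epsilon) = i\epsilon \eta^0$, whence
\[
p(X) - T_\epsilon = u(\eta^0 - \eta^\epsilon) = i\epsilon\, \xi_\epsilon \eta^0, \qquad \xi_\epsilon := u(i\epsilon - Q(X))^{-1} \in \M^{1\times d}.
\]

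It remains to estimate the trace. Expanding $\xi_\epsilon \eta^0 = \sum_{j=1}^d (\xi_\epsilon)_j\, p_j(X)$, using cyclic invariance and $|\tau(AB)| \leq \|A\|_2\|B\|_2$ together with the resolvent bounds $\|(z - p(X))^{-1}\|, \|(z - T_\epsilon)^{-1}\| \leq 1/\Im(z)$, one obtains
\[
\bigl|G_{p(X)}(z) - [\bG_{\hat{p}(X)}(\Lambda_\epsilon(z))]_{1,1}\bigr| \leq \frac{\epsilon}{(\Im z)^2} \sum_{j=1}^d \|(\xi_\epsilon)_j\|_2\, \|p_j(X)\|_2 \leq \frac{\epsilon}{(\Im z)^2}\, \|\xi_\epsilon\|_2\, \|\eta^0\|_2,
\]
the last step by Cauchy--Schwarz in $j$. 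Finally, $v = u^\ast$ gives $\xi_\epsilon^\ast = (-i\epsilon - Q(X))^{-1}v =: \eta^{-\epsilon}$, and repeating the derivation of $\eta^0 - \eta^\epsilon$ with $\epsilon$ replaced by $-\epsilon$ yields $\eta^{-\epsilon} - \eta^0 = i\epsilon(-i\epsilon - Q(X))^{-1}\eta^0$, hence $\|\eta^{-\epsilon} - \eta^0\|_2 \leq \|\eta^0\|_2$ (using $\|(-i\epsilon - Q(X))^{-1}\| \leq 1/\epsilon$), and therefore $\|\xi_\epsilon\|_2 = \|\eta^{-\epsilon}\|_2 \leq 2\|\eta^0\|_2 = 2\sqrt{\sum_j \|p_j(X)\|_2^2}$. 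Plugging this in yields exactly \eqref{eq:approximation_improved}, and part (ii) follows by running the identical argument in the tracial $W^\ast$-probability space $(M_N(L^{\infty -}(\Omega, \bP)), \tr_N \otimes \bE)$, since nothing in the above proof uses specific features of $(\M, \tau)$ beyond the general $W^\ast$-framework.
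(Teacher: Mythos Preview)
Your proof is correct and follows essentially the same route as the paper: Schur complement for the $(1,1)$-entry, resolvent identity, then a Cauchy--Schwarz estimate on $L^2(\M,\tau)^d$, together with the bound $\|(i\epsilon-Q(X))^{-1}u^\ast\|_2\le 2\|Q(X)^{-1}u^\ast\|_2$ obtained from a second resolvent identity. Your vectors $\eta^0,\eta^{\pm\epsilon},\xi_\epsilon$ correspond exactly to the paper's $\Psi_1,\Psi_2$ and $Q(X)^{-1}u^\ast$.

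The only genuine difference is in how the polynomials $p_j$ are produced. You take $p_j$ to be the coordinate entries of $-Q^{-1}v$, appeal to specific linearization algorithms to ensure these are polynomials, and then invoke an unspecified ``enlargement'' to force $p_1=p$. The paper instead observes that, by the very definition of a linear representation used here, $Q$ is invertible in $M_d(\C\langle x_1,\dots,x_n\rangle)$, so $Q^{-1}$ already has polynomial entries for \emph{any} admissible $\rho$ --- no restriction to particular algorithms is needed. It then rescales so that $uu^\ast=1$, extends $u_1:=u$ to an orthonormal basis $u_1,\dots,u_d$ of $\C^d$, and sets $p_j:=-u_jQ^{-1}v$. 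This makes $p_1=p$ immediate and also gives $\sum_j\|p_j(X)\|_2^2=\|Q(X)^{-1}u^\ast\|_{L^2(\M,\tau)^d}^2$ directly from $1_d=\sum_j u_j^\ast u_j$. Your construction yields the same sum (it is the same vector measured in a different orthonormal basis), so the final inequality is identical; the paper's choice just removes the hand-wavy ``enlargement'' step and covers the theorem as stated, for arbitrary selfadjoint linear representations.
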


\begin{proof}
Obviously, with $\rho=(u,Q,v)$ also $\rho_\lambda = (\lambda^{1/2} u, \lambda Q, \lambda^{1/2} v)$ yields a selfadjoint linear representation of $P$ for every $\lambda>0$; thus, since $u\neq 0$ by assumption, we may assume with no loss of generality that $u$ is normalized such that $uu^\ast = 1$. Basic linear algebra tells us that we may find then an orthonormal basis $\{u_1,\dots,u_d\}$ of $\C^d$ with $u_1 = u$. We use these row vectors to define the wanted noncommutative polynomials $P_1,\dots,P_d$ by $P_j := - u_j Q^{-1} v$ for $j=1,\dots,d$; by construction, we clearly have $P_1 = P$.

We shall show that these polynomials $P_1,\dots,P_d$ have the required properties. We will only prove the validity of Item \ref{it:approximation_improved}; the details of the proof of Item \ref{it:approximation_improved_random_matrices} are left to the reader.

Let us take any selfadjoint operators $X_1,\dots,X_n$ in an arbitrary tracial $W^\ast$-probability space $(\M,\tau)$. Further, let us choose $z\in\C^+$ and $\epsilon>0$.

We begin with the observation that the operator $z - u(i\epsilon 1_d - Q(X))^{-1} u^\ast$ is invertible in $\M$ with
\begin{equation}\label{eq:inverse-bound}
\|(z - u(i\epsilon 1_d - Q(X))^{-1} u^\ast)^{-1}\| \leq \frac{1}{\Im(z)}.
\end{equation}
In order to verify this, let us abbreviate $h:=z-u(i\epsilon 1_d - Q(X))^{-1}u^\ast$; we observe that
\begin{align*}
\Im(h) &= \Im(z) - \epsilon u (i\epsilon 1_d + Q(X))^{-1} (i\epsilon 1_d - Q(X))^{-1} u^\ast\\
       &= \Im(z) + \epsilon u (\epsilon^2 1_d + Q(X)^2)^{-1} u^\ast\\
       &\geq \Im(z),
\end{align*}
since $(\epsilon^2 1_d + Q(X)^2)^{-1} \geq 0$. This implies (cf. \cite[Lemma 3.1 (i)]{HT05}) that $h$ is invertible with $\|h^{-1}\| \leq \frac{1}{\Im(z)}$, as desired.

Next, we note that according to the Schur complement formula
$$\big[ \bG_{\hat{P}(X)}(\Lambda_\epsilon(z)) \big]_{1,1} = \tau\big( \big(z-u(i\epsilon 1_d - Q(X))^{-1} u^\ast \big)^{-1} \big).$$
With the help of the resolvent identity, we obtain
$$P(X) - u(i\epsilon 1_d-Q(X))^{-1}u^\ast = -i\epsilon u Q(X)^{-1} (i\epsilon 1_d - Q(X))^{-1} u^\ast$$
and in turn
$$G_{P(X)}(z) - \big[ \bG_{\hat{P}(X)}(\Lambda_\epsilon(z)) \big]_{1,1} = -i\epsilon\, \langle \Psi_1, \Psi_2 \rangle_{L^2(\M,\tau)^d}$$
for the vectors $\Psi_1,\Psi_2$ in $\M^d \subset L^2(\M,\tau)^d$ that are given by
\begin{align*}
\Psi_1 &:= (i\epsilon 1_d-Q(X))^{-1} u^\ast \big(z-u(i\epsilon 1_d-Q(X))^{-1}u^\ast\big)^{-1},\\
\Psi_2 &:= Q(X)^{-1} u^\ast (\overline{z}-P(X))^{-1}.
\end{align*}
Thus, by the Cauchy-Schwarz inequality on $L^2(\M,\tau)^d$, we get that
$$\big|G_{P(X)}(z) - \big[ \bG_{\hat{P}(X)}(\Lambda_\epsilon(z)) \big]_{1,1}\big| \leq \epsilon\, \|\Psi_1\|_{L^2(\M,\tau)^d} \|\Psi_2\|_{L^2(\M,\tau)^d}.$$
One easily sees that
$$\|\Psi_2\|_{L^2(\M,\tau)^d} \leq \frac{1}{\Im(z)}\, \|Q(X)^{-1} u^\ast\|_{L^2(\M,\tau)^d},$$
and similarly, by \eqref{eq:inverse-bound}, we get that
$$\|\Psi_1\|_{L^2(\M,\tau)^d} \leq \frac{1}{\Im(z)}\, \|(i\epsilon 1_d-Q(X))^{-1} u^\ast\|_{L^2(\M,\tau)^d}.$$
By combining these observations, we are led to
\begin{align*}
\lefteqn{\big|G_{P(X)}(z) - \big[ \bG_{\hat{P}(X)}(\Lambda_\epsilon(z)) \big]_{1,1}\big|}\\
& \qquad \leq \frac{\epsilon}{\Im(z)^2} \|Q(X)^{-1} u^\ast\|_{L^2(\M,\tau)^d} \|(i\epsilon 1_d-Q(X))^{-1} u^\ast\|_{L^2(\M,\tau)^d}.
\end{align*}

Furthermore, by the resolvent identity,
\begin{align*}
\lefteqn{\|(i\epsilon 1_d-Q(X))^{-1} u^\ast\|_{L^2(\M,\tau)^d}}\\
&\qquad \leq \|Q(X)^{-1} u^\ast\|_{L^2(\M,\tau)^d} + \epsilon\, \|(i\epsilon 1_d-Q(X))^{-1} Q(X)^{-1} u^\ast\|_{L^2(\M,\tau)^d}\\
&\qquad \leq 2 \|Q(X)^{-1} u^\ast\|_{L^2(\M,\tau)^d}.
\end{align*}
Finally, we involve $1_d = u_1^\ast u_1 + \dots + u_d^\ast u_d$ in order to obtain
$$\|Q(X)^{-1} u^\ast\|_{L^2(\M,\tau)^d}^2 = \sum^d_{j=1} \tau(u Q(X)^{-1} u_j^\ast u_j Q(X)^{-1} u^\ast) = \sum^d_{j=1} \|P_j(X)\|_2^2.$$
Thus, we arrive at \eqref{eq:approximation_improved}, which concludes the proof of Item \ref{it:approximation_improved}.
\end{proof}

\bibliographystyle{amsalpha}
\bibliography{Hoelder_ref}

\end{document}